\numberwithin{equation}{section}
\theoremstyle{plain}
\newtheorem{theorem}{Theorem} 
\newtheorem{lemma}{Lemma}\numberwithin{lemma}{section}
\newtheorem{proposition}{Proposition}\numberwithin{proposition}{section}
\newtheorem{corollary}{Corollary}\numberwithin{corollary}{section}
\theoremstyle{definition}
\numberwithin{definition}{section}
\theoremstyle{remark}
\newtheorem{remark}{Remark}\numberwithin{remark}{section}
\newcommand{\R}{\mathbb{R}}
\newcommand{\s}{\mathbb{S}}
\newcommand{\p}{\mathcal{P}}
\title {Absolute moments and Fourier-based probability metrics}
\author{Yong-Kum Cho}
\date{}
\begin{document}

\maketitle

\centerline{Department of Mathematics}
\centerline{College of Natural Science, Chung-Ang University}
\centerline{84 Heukseok-Ro, Dongjak-Gu, Seoul 156-756, Korea}

\medskip

\centerline{e-mail: ykcho@cau.ac.kr}

\bigskip

\begin{itemize}
\item[{}] {\bf Abstract.} We present a family of explicit formulae
for evaluating absolute moments of probability measures on $\mathbb{R}^d$
in terms of Fourier transforms. As to the space of probability measures
possessing finite absolute moments of an arbitrary order, we exploit
our formulae to characterize its Fourier image and construct
Fourier-based probability metrics which make the space complete. As applications, we compute
absolute moments of those probability measures whose characteristic functions belong to the
Scheonberg classes, estimate absolute moments of convolutions and investigate the
asymptotic behavior of solutions to the heat-diffusion equations from a probability view-point.
\end{itemize}

\bigskip
{\small
\begin{itemize}
\item[{}]{\bf Keywords.} Absolute moment, characteristic function, difference operator,
Fourier transform, heat-diffusion, probability metric, Scheonberg class.

\item[{}] 2010 Mathematics Subject Classification: 28A33, 42B10, 44A60, 60E10.
\end{itemize}}

\section{Introduction}
This paper is devoted to establishing an analysis framework based upon the Fourier
transform theory for studying probability measures defined on the Euclidean spaces which
offers computational algorithms for evaluating absolute moments and complete probability metrics.

For $\,d\ge 1,\,$ let $\R^d$ be the $d$-dimensional Euclidean space in which
the usual norm and inner product will be written as
$$|v| = \sqrt{v_1^2 + \cdots + v_d^2}\,,\quad v\cdot w= v_1 w_1 + \cdots + v_d w_d$$
for $\,v, w\in\R^d.\,$ Let $\mathcal{P}(\R^d)$ be
the set of all probability measures on $\R^d$, those nonnegative Borel measures with unit total mass.
For $\,\alpha>0,\,$ we denote by $\p_\alpha(\R^d)\,$ the set of all probability measures
whose absolute moments of order $\alpha$ are finite, that is,
\begin{equation}
\p_\alpha(\R^d) = \left\{\mu\in\p(\R^d): \int_{\R^d} |v|^\alpha d\mu(v)<\infty\right\}\,.
\end{equation}

\begin{itemize}
\item[(i)]
For $\,0<\alpha<\beta,\,$ $\,\p_\beta(\R^d)\subset\p_\alpha(\R^d)\,$
but the inclusion is proper. Indeed, given $\,\alpha>0,\,$
there exists $\,\mu\in \p_\alpha(\R^d)\,$ with $\,\mu\notin \p_\beta(\R^d)\,$ for any $\,\beta>\alpha.$
An example is given by
\begin{equation}
\mu_\alpha  =  \left(\sum_{k=1}^\infty\frac{1}{2^{k\alpha} k^2}\right)^{-1} \,
\sum_{k=1}^\infty\frac{1}{2^{k\alpha} k^2}\,\delta_{2^k e_1}\,,
\end{equation}
where $\,e_1=(1,0,\cdots, 0)\in\R^d\,$ and $\delta_{a}$ stands for
the Dirac measure supported at $a$ for any nonzero point $\,a\in\R^d.$
We remark that the probability measure $\mu_0$, which corresponds to the case $\,\alpha=0\,$
in this example, does not possess finite absolute moments of any order.
\item[(ii)] For a positive integer $m$, the moments of $\,\mu\in\p(\R^d)\,$ of order $m$ refers to the
unordered tuple of real values consisting of
$$\int_{\R^d} v^\sigma d\mu(v)\quad\text{for}\quad |\sigma|=m,$$
where $\,\sigma\in\mathbb{Z}_+^d\,$ and the usual multi-index notation is used. Clearly,
each $\,\mu\in\p_\alpha(\R^d)\,$ possesses finite moments up to order $\alpha$.
\end{itemize}

In dealing with probability measures, it is of great advantage to identify
them with functions by means of the Fourier transform which assigns to each
$\,\mu\in\p(\R^d)\,$ a unique continuous function $\widehat\mu$ defined by
\begin{equation}\label{1}
\widehat\mu (\xi) =  \int_{\R^d} e^{-i \xi\cdot v}\,d\mu(v)\quad(\xi\in\R^d).
\end{equation}

Let $\Phi(\R^d)$ denote the image of $\mathcal{P}(\R^d)$ under the Fourier
transform whose members are referred to as characteristic functions.
A well-known theorem of S. Bochner (\cite{Boc1}, \cite{Boc2}, 1932-3) states that $\,\phi\in\Phi(\R^d)\,$
if and only if $\,\phi(0) =1,\,$ it is continuous and positive definite, that is,
\begin{equation*}
\sum_{j=1}^n\sum_{k=1}^n\,\phi(\xi_j - \xi_k)\,z_j\overline{z_k} \ge 0
\end{equation*}
for any $\,\xi_1, \cdots, \xi_n\in\R^d,\, z_1, \cdots, z_n\in\mathbb{C}\,$ and any integer $n$.
Combined with the uniqueness property, Bochner's theorem asserts in fact that
the Fourier transform is a bijection from $\mathcal{P}(\R^d)$
onto $\Phi(\R^d)$.

An important consequence is that an analysis structure on $\Phi(\R^d)$ can be transferred into
the one on $\mathcal{P}(\R^d)$ via the Fourier transform.
For instance, it is evident by L\'evy's continuity theorem that the uniform metric induced by
$\,\|\phi\|_\infty = \sup_{\xi\in\R^d}\,\left|\phi(\xi)\right|\,$ makes the space $\Phi(\R^d)$ complete.
By transference, hence, $\mathcal{P}(\R^d)$ becomes a complete metric space with respect to
\begin{equation}
d_\infty(\mu, \nu) = \left\|\widehat \mu - \widehat \nu \right\|_\infty\,.
\end{equation}

As it is customary, any metric defined on a class of probability measures is
referred to as a probability metric. While $d_\infty$ is an example of a
complete probability metric on $\p(\R^d)$, its practical use is limited for it
is too insensitive to capture intrinsic properties of probability measures.
For instance, if $(\mu_t)_{t>0}$ denotes the family of Gaussian measures defined as
\begin{equation}
d\mu_t(v) = G_t(v) dv, \quad G_t(v) = (4\pi)^{-d/2} e^{-|v|^2/4t}\quad(t>0)
\end{equation}
and $\delta$ the Dirac measure supported at the origin, then it is simple to observe
$\,d_\infty(\mu_t, \delta) =1\,$ uniformly in $\,t>0.\,$ In view of the familiar fact
the Gaussian arises as the fundamental solution to the heat equation on $\R^d$, it may be expected
that $\,\mu_t \to \delta\,$ as $\,t\to 0\,$ and hence the metric $d_\infty$ is not
suitable in dealing with such a convergence matter.

Our concern is to construct complete Fourier-based probability metrics
on the $\p_\alpha(\R^d)$ in such a way that those
are sensitive enough to distinguish the absolute moments. As it is proven
to be useful in diverse situations, the problem of constructing such a metric
has become an important issue particularly in the theory of partial differential
equations.

Despite the fundamental role played by the Fourier transform in dealing with probability measures,
it is quite recently that the study on Fourier-based probability metrics had started.
It is initiated by E. Gabetta, G. Toscani and W. Wennberg (\cite{GTW}, 1995) who introduced the Fourier-based metrics
\begin{equation}\label{Fb1}
d_\alpha(\mu, \,\nu) =
\sup_{\xi\in\R^d}\frac{\,\left|\widehat{\mu}(\xi) - \widehat\nu(\xi)\right|\,}{|\xi|^\alpha}\quad(\alpha>0)
\end{equation}
in studying asymptotic behavior of solutions to the Boltzmann equation.
Subsequently, these metrics are shown to be effective in many other problems
(\cite{CK}, \cite{CGT}, \cite{Cho1}, \cite{GJT}, \cite{LT}, \cite{M}, \cite{PT}, \cite{TV}).
As it is shown by J. A. Carrillo and G. Toscani (\cite{CT}, 2007), each
$d_\alpha$ is well defined on the subclass of $\p_\alpha(\R^d)$ which consists of
probability measures having fixed moments up to order $[\alpha]$.

In their work on infinite energy solutions to the Boltzmann equation,
Y. Morimoto, T. Yang and S. Wang (\cite{MWY1}, 2015) introduced
another family of Fourier-based metrics defined by
\begin{equation}
d_{\alpha, \,\beta}(\mu, \nu) = d_\beta(\mu, \nu) +
\int_{\R^d} \frac{\,\left|\widehat{\mu}(\xi) - \widehat\nu(\xi)\right|\,}{|\xi|^{d+\alpha}}\,d\xi\,,
\end{equation}
where $\,0<\beta\le\min(\alpha, 1).\,$ In the range $\,0<\alpha<1,\,$ they characterized
the Fourier image of $\p_\alpha(\R^d)$ and proved that each $d_{\alpha, \,\beta}$ is a
complete probability metric on $\p_\alpha(\R^d)$. In addition, they
extended the results to $\,1\le\alpha<2\,$ for which $\p_\alpha(\R^d)$ is replaced by
\begin{equation}
\widetilde{\p_\alpha}(\R^d) = \left\{\mu\in\p_\alpha(\R^d) : \int_{\R^d} v^\sigma d\mu(v) = 0, \,\,|\sigma| = 1\right\}.
\end{equation}

In the present work we aim to obtain complete Fourier-based metrics on $\p_\alpha(\R^d)$
with any $\,\alpha>0\,$ and no additional assumptions. As it becomes clearer now how to proceed, it is essential
to characterize the Fourier image of $\p_\alpha(R^d)$ in the first place. For this purpose,
we start with deriving a list of formulae for evaluating absolute moments in terms of Fourier transforms.

The problem of evaluating absolute moments is of great importance
in the theory of probability and applied sciences due to its various applications.
Even for a probability metric, it is often inevitable to compute absolute moments in practice. To illustrate,
it is easy to see the distance between any $\,\mu\in\p_\alpha(\R^d)\,$
and the Dirac mass $\delta$ by the Wasserstein metric $W_\alpha$ is given by
\begin{equation}
W_\alpha(\mu, \delta) = \left(\int_{\R^d}|v|^\alpha d\mu(v)\right)^{1/\alpha}\quad(\alpha\ge 1)
\end{equation}
(see \cite{CT}, \cite{K}, \cite{V}, \cite{Wa} for the definition and properties).

An elementary calculation shows if $\,\mu\in\mathcal{P}_{2n}(\R^d)\,$ with $n$ an integer, then
\begin{equation}
\int_{\R^d} |v|^{2n} d\mu(v) = (-1)^n\,\sum_{|\sigma|=n} \frac{n!}{\sigma!}\,\left(\partial^{2\sigma}\widehat\mu\right)(0)
\end{equation}
and hence the problem of evaluating absolute moments in terms of Fourier transforms reduces to
the one for which the orders are not even integers.
While we are not aware of an explicit formula valid for arbitrary dimension $d$,
there are a number of formulae available when $\,d=1.\,$
Perhaps simplest is the formula of B. Brown (\cite{Brow}, 1972) which states

\begin{itemize}
\item[{}] {\bf Theorem.} {\it
Suppose $\alpha$ is not an even integer and $\,\alpha = k +\delta\,$ with $k$ an integer and $\,0<\delta\le 1.\,$
If $\,\mu\in\mathcal{P}_\alpha(\R)\,$ and $\,\widehat\mu =\phi,\,$ then
\begin{align}\label{Br}
\int_{-\infty}^{\infty} |v|^\alpha d\mu(v) &= C_\alpha
\int_0^\infty \Re\left[\phi^{(k)}(r) - \phi^{(k)}(0)\right] \frac{dr}{\,r^{1+\delta}},\\
\text{where}\,\,\,C_\alpha &=  2(-1)^{k+1}\sin\left(\alpha\pi/2\right)\Gamma(\delta +1)/\pi\,.\nonumber
\end{align}}
\end{itemize}

Here $\Re f $ stands for the real part if $f$ is a complex-valued
function. Brown's formula is based on the earlier version of P. Hsu (\cite{Hsu}, 1951) and B. von Bahr (\cite{Bahr1}, 1965).
We refer to M. Matsui and Z. Pawlas (\cite{MP}, 2014) for other developments and
surveyed references.

We evaluate and characterize absolute moments by means of the (finite forward) difference operator and its iterates.
Given $\,\xi\in\R^d,\,$ let $\Delta_\xi$ denote the difference operator with increment $\xi$ defined by
\begin{equation}
\Delta_\xi(\phi)(x) = \phi(x+\xi) - \phi(x)\quad(x\in\R^d)
\end{equation}
for each scalar function $\phi$ and $\,\Delta_\xi^k = \Delta_\xi\left(\Delta_\xi^{k-1}\right)\,$
its $k$th iterate defined by
\begin{equation}
\Delta_\xi^k(\phi)(x) = \sum_{m=0}^k\binom km (-1)^{k-m}\, \phi(x + m\xi)\quad(k\ge 1).
\end{equation}
If $\phi$ is the Fourier transform of a probability measure $\mu$ on $\R^d$, then
\begin{align}\label{DOI}
\Delta_\xi^k(\phi)(0) &= \sum_{m=0}^k\binom km (-1)^{k-m}\, \phi(m\xi)\nonumber\\
&=\int_{\R^d} \left( e^{-i \xi\cdot v} -1\right)^k\,d\mu(v).
\end{align}
In most of what follows, we shall deal with this type of iterated difference operators
in which the increment $\xi$ will be taken as a variable.

As a means of approximating derivatives, the difference operator and its iterates
arise frequently in the theory of function spaces. In particular, let us mention
some of our ideas owe to the work of R. Babby (\cite{Bag1}, \cite{Bag2}) in which the author constructed
certain functionals involving difference operators of type (\ref{DOI}) to characterize Riesz potentials and Sobolev spaces.

Our formulae for evaluating absolute moments read as follows.

\medskip

\begin{theorem}\label{theoremM1}
For $\,\alpha>0\,$ and a positive integer $k$, define
\begin{align}\label{M11}
&A(k, \alpha, d) =  -\, \frac{\,\sin \left(\frac{\alpha\pi}{2}\right)
\Gamma(\alpha +1)\Gamma\left(\frac{\alpha + d}{2}\right)}
{\pi^{\frac{d+1}{2}}\, S(k, \alpha)\Gamma\left(\frac{\alpha +1}{2}\right)}\,,\nonumber\\
&\text{where}\quad S(k, \alpha) = \sum_{m=1}^k \binom k m (-1)^{k-m}\,m^\alpha\,.
\end{align}
Let $\,\mu\in\mathcal{P}_\alpha(\R^d)\,$ and $\phi$ be the Fourier transform of $\mu$ defined as in (\ref{1}).

\begin{itemize}
\item[\rm{(i)}] If $\alpha$ is not an integer with $\,0<\alpha<k,\,$ then
\begin{equation}\label{M12}
\int_{\R^d}|v|^\alpha d\mu(v) = A(k, \alpha, d)
\int_{\R^d}\frac{\,\Delta_\xi^k(\phi)(0)\,}{\,|\xi|^{d+\alpha}}\,d\xi\,.
\end{equation}
\item[\rm{(ii)}] If $k$ is odd and $\alpha$ is not an integer with $\,0<\alpha<k+1\,$
or $\,\alpha =k,\,$ then
\begin{equation}\label{M13}
\int_{\R^d}|v|^\alpha d\mu(v) = A(k, \alpha, d)
\int_{\R^d}\frac{\,\Delta_\xi^k\left(\Re\,\phi\right)(0)}{\,|\xi|^{d+\alpha}}\,d\xi\,.
\end{equation}
Moreover, in the case when $\alpha$ is not an integer with $\,0<\alpha<k,\,$ both formulae
(\ref{M12}) and (\ref{M13}) coincide.
\end{itemize}
\end{theorem}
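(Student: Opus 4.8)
The plan is to substitute the representation (\ref{DOI}), $\Delta_\xi^k(\phi)(0)=\int_{\R^d}(e^{-i\xi\cdot v}-1)^k\,d\mu(v)$, into the right side of (\ref{M12}), interchange the order of integration, and reduce the resulting $\xi$-integral to a one-dimensional integral evaluable in closed form. Thus part (i) amounts to the pointwise identity
\[
\int_{\R^d}\frac{(e^{-i\xi\cdot v}-1)^k}{|\xi|^{d+\alpha}}\,d\xi=\frac{|v|^\alpha}{A(k,\alpha,d)}\qquad (v\neq 0)
\]
together with the legitimacy of Fubini's theorem (the atom at $v=0$, if present, contributes zero to both sides of (\ref{M12})). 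For the latter I would use the elementary bound $|e^{-i\xi\cdot v}-1|\le\min\{2,|\xi|\,|v|\}$, whence $|(e^{-i\xi\cdot v}-1)^k|\le\min\{2^k,(|\xi|\,|v|)^k\}$: the first alternative gives integrability of $|\xi|^{-d-\alpha}$ near infinity ($\alpha>0$), the second that of $(|\xi|\,|v|)^k|\xi|^{-d-\alpha}$ near the origin ($\alpha<k$); a dilation $\xi\mapsto\xi/|v|$ then shows $\int_{\R^d}|e^{-i\xi\cdot v}-1|^k|\xi|^{-d-\alpha}\,d\xi=c\,|v|^\alpha$ with $c<\infty$, and $\mu\in\p_\alpha(\R^d)$ completes the justification of the interchange.

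To evaluate the kernel for a fixed $v\neq 0$, I would use the rotation invariance of $|\xi|^{d+\alpha}$ and Lebesgue measure to assume $v=|v|e_1$, the dilation $\xi\mapsto\xi/|v|$ to extract the factor $|v|^\alpha$, and then integrate out the transverse variables $\xi'=(\xi_2,\dots,\xi_d)$ by the Beta integral
\[
\int_{\R^{d-1}}\frac{d\xi'}{(\xi_1^2+|\xi'|^2)^{(d+\alpha)/2}}=\frac{\pi^{(d-1)/2}\,\Gamma\!\left(\frac{\alpha+1}{2}\right)}{\Gamma\!\left(\frac{d+\alpha}{2}\right)}\,|\xi_1|^{-1-\alpha}.
\]
Since the imaginary part of $(e^{-it}-1)^k$ is an odd function of $t$, the whole matter collapses to the one-dimensional integral $J(\alpha)=2\int_0^\infty t^{-1-\alpha}\,\Re\!\bigl[(e^{-it}-1)^k\bigr]\,dt$.

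Evaluating $J(\alpha)$ is the heart of the proof and, I expect, the main obstacle. One cannot expand $\Re[(e^{-it}-1)^k]=\sum_{m=0}^k\binom km(-1)^{k-m}\cos(mt)$ and integrate term by term, since none of the resulting integrals converges. The remedy is to subtract from each $\cos(mt)$ a Taylor polynomial $P(mt)$ of the cosine of a suitable even degree $2L\le k-1$ (depending on $\alpha$), which exists because $\alpha$ is never an even integer in either part of the theorem. Using $\sum_{m=1}^k\binom km(-1)^{k-m}m^{2\ell}=0$ for $1\le 2\ell\le k-1$ and $\sum_{m=1}^k\binom km(-1)^{k-m}=(-1)^{k+1}$ (which cancels the constant term of $(e^{-it}-1)^k$) one gets the pointwise identity $\Re[(e^{-it}-1)^k]=\sum_{m=1}^k\binom km(-1)^{k-m}(\cos(mt)-P(mt))$, in which each summand is $O(|t|^{2L+2})$ at the origin and $O(|t|^{2L})$ at infinity, hence absolutely integrable against $t^{-1-\alpha}$. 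Rescaling $t\mapsto t/m$ in the $m$-th summand pulls out $m^\alpha$, so $J(\alpha)=2\,S(k,\alpha)\int_0^\infty t^{-1-\alpha}\bigl(\cos t-P(t)\bigr)\,dt$ with $S(k,\alpha)$ exactly the sum in (\ref{M11}). The last integral is the classical Mellin transform $\int_0^\infty t^{s-1}\cos t\,dt=\Gamma(s)\cos(\pi s/2)$, analytically continued (by precisely this Taylor subtraction) to $s=-\alpha$; when $\alpha=k$ is odd the simple pole of $\Gamma(-\alpha)$ is cancelled by the simple zero of $\cos(\pi\alpha/2)$, so the value remains finite. Hence $J(\alpha)=2\,S(k,\alpha)\,\Gamma(-\alpha)\cos(\pi\alpha/2)$, and the reflection formula $\Gamma(\alpha)\Gamma(1-\alpha)=\pi/\sin(\pi\alpha)$ together with $\sin(\pi\alpha)=2\sin(\tfrac{\pi\alpha}{2})\cos(\tfrac{\pi\alpha}{2})$ rewrites this as $J(\alpha)=-\pi\,S(k,\alpha)\big/\!\bigl(\sin(\tfrac{\pi\alpha}{2})\,\Gamma(\alpha+1)\bigr)$. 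Multiplying the constants accumulated in the three reductions and comparing with the definition (\ref{M11}) of $A(k,\alpha,d)$ yields the displayed identity, hence (i). The only delicate points are the bookkeeping of the Taylor corrections and the careful justification of the continued Mellin identity; the geometric reductions are routine.

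For part (ii) one has $\Delta_\xi^k(\Re\phi)(0)=\Re\,\Delta_\xi^k(\phi)(0)=\int_{\R^d}\Re[(e^{-i\xi\cdot v}-1)^k]\,d\mu(v)$, so the entire reduction above applies verbatim; the only change is that the convergence of the $\xi$-integral (hence of $J(\alpha)$) is now governed by the vanishing order of $\Re[(e^{-it}-1)^k]$ at $t=0$, and when $k$ is odd the leading term $(-i)^kt^k$ of $(e^{-it}-1)^k$ is purely imaginary, so $\Re[(e^{-it}-1)^k]=O(t^{k+1})$ — which is precisely what permits the enlarged range $0<\alpha<k+1$ as well as $\alpha=k$ (note $S(k,k)=k!$ and $\sin(\pi k/2)=\pm1$, so $A(k,k,d)$ is finite and nonzero). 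Finally, for non-integer $\alpha$ with $0<\alpha<k$, formulas (\ref{M12}) and (\ref{M13}) coincide because $\Im\,\Delta_\xi^k(\phi)(0)$ is odd in $\xi$ while $\int_{\R^d}|\Delta_\xi^k(\phi)(0)|\,|\xi|^{-d-\alpha}\,d\xi<\infty$ by the bound of the first paragraph, so the imaginary part contributes nothing to $\int_{\R^d}\Delta_\xi^k(\phi)(0)\,|\xi|^{-d-\alpha}\,d\xi$.
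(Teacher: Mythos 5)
Your proposal is correct and shares the paper's overall architecture: substitute the identity $\Delta_\xi^k(\phi)(0)=\int(e^{-i\xi\cdot v}-1)^k\,d\mu(v)$, justify Fubini with the bound $|e^{-i\xi\cdot v}-1|^k\le\min(2^k,(|\xi|\,|v|)^k)$ (sharpened to order $k+1$ for the real part when $k$ is odd), and reduce the kernel $\int_{\R^d}(e^{-i\xi\cdot v}-1)^k|\xi|^{-d-\alpha}\,d\xi$ to the one--dimensional integral $\int_0^\infty r^{-1-\alpha}\bigl[(e^{-ir}-1)^k+(e^{ir}-1)^k\bigr]\,dr$ times $|v|^\alpha$ and an explicit Beta-type constant. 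You diverge in the two technical sub-steps. For the geometric reduction you slice in Cartesian coordinates and integrate out the transverse variables with a Beta integral, whereas the paper passes to polar coordinates and parameterizes the sphere by the polar angle; these are equivalent and yield the same constant $\pi^{(d-1)/2}\Gamma(\frac{\alpha+1}{2})/\Gamma(\frac{d+\alpha}{2})$. The genuinely different ingredient is the evaluation of the one-dimensional oscillatory integral (the paper's Lemma \ref{lemmaI}): the paper writes the integrand as $-4\sum_m\binom km(-1)^{k-m}\sin^2(mr/2)$, handles $\alpha\in[2\ell,2\ell+2)$ by integrating by parts $2\ell$ times using $S(k,2\ell)=0$, and invokes the Mellin transform of $\sin^2$; you instead subtract a Taylor polynomial of the cosine of degree $2[\alpha/2]$ from each $\cos(mr)$ term — again using $S(k,2\ell)=0$ to keep the identity exact — rescale to extract $m^\alpha$, and appeal to the analytically continued Mellin transform $\int_0^\infty t^{s-1}\cos t\,dt=\Gamma(s)\cos(\pi s/2)$ at $s=-\alpha$, with the pole--zero cancellation handling $\alpha=k$ odd. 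Both routes rest on the same cancellation $S(k,2\ell)=0$ and produce the same value $-\pi S(k,\alpha)/(\sin(\pi\alpha/2)\Gamma(\alpha+1))$; the paper's version is self-contained modulo a table entry for the $\sin^2$ Mellin transform, while yours delegates the work to the classical continuation of the cosine Mellin transform, which (as you acknowledge) needs its own justification — essentially the same integration by parts you are avoiding. Your closing observation that (\ref{M12}) and (\ref{M13}) coincide because $\Im\,\Delta_\xi^k(\phi)(0)$ is odd in $\xi$ and absolutely integrable against $|\xi|^{-d-\alpha}$ is a clean direct argument; the paper obtains the coincidence only indirectly, by deriving both formulae separately.
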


\medskip

\begin{remark} Our formulae cover any order except the even integer cases.
Due to continuity with respect to the order, however, the absolute moments of an even integer order
can be obtained as the limit
$$\int_{\R^d} |v|^{2n} d\mu(v) = \lim_{\alpha\,\to \,2n}\int_{\R^d} |v|^\alpha d\mu(v)\quad(n\in\mathbb{N})$$
provided $\,\mu\in\p_{2n +\epsilon}(\R^d)\,$ for some $\,\epsilon>0.$ In addition, we remark the following
aspects of our formulae for the sake of clarity.

\begin{itemize}
\item[(a)] The theorem remains valid without altering any material if $\mathcal{P}_\alpha(\R^d)$
is replaced by the space of all nonnegative Borel measures on $\R^d$ whose total masses and absolute moments of order $\alpha$
are finite.
\item[(b)] As it will be shown in Lemma \ref{lemmaS}, the sum $S(k, \alpha)$ is not zero in the stated ranges of
$k$ and $\alpha$. We also point out
$$\Delta_\xi^k\left(\Re\,\phi\right)(0) = \Re\left[\Delta_\xi^k(\phi)\right](0).$$
\item[(c)] A distinctive feature is that there are an infinite number of formulae available
for representing the absolute moment of a fixed order. For example, our formulae with $\,k=1, 2\,$ in the case $\,d=1\,$ yield
\begin{align*}
\int_{-\infty}^{\infty} |v|^\alpha d\mu(v)
&= -\frac{\sin\left(\frac{\alpha\pi}{2}\right)\Gamma(\alpha +1)}{\pi}
\int_{-\infty}^\infty \frac{\,\left(\Re\,\phi\right)(\xi) - 1\,}{|\xi|^{1+\alpha}}\,d\xi\\
& =  -\frac{\sin\left(\frac{\alpha\pi}{2}\right)\Gamma(\alpha +1)}{\pi (2^\alpha -1)}
\int_{-\infty}^\infty \frac{\,[\phi(2\xi) -2\phi(\xi) + 1]\,}{|\xi|^{1+\alpha}}\,d\xi
\end{align*}
for which the first one is valid for all $\,0<\alpha<2\,$ and the second one for all $\,0<\alpha<2, \,\alpha\ne 1.$
In comparison with Brown's formula, the first one coincides with (\ref{Br}) for $\,0<\alpha\le 1.\,$
\end{itemize}
\end{remark}

\medskip

By making use of our representation formulae for absolute moments, we shall characterize
the Fourier image of $\mathcal{P}_\alpha(\R^d)$ with an arbitrary order $\,\alpha>0.\,$
While we postpone its full statements until section 5, we summarize our main results as
follows, where $\mathcal{F}$ denotes the Fourier transform.

\medskip

\begin{theorem}\label{theorem2} Let $k$ be a positive integer.
\begin{itemize}
\item[\rm(i)] If $\alpha$ is not an integer with $\,0<\alpha<k,\,$ then
\begin{equation}\label{1.1}
\mathcal{F}\left(\p_\alpha(\R^d)\right) = \left\{\phi\in\Phi(\R^d) :
\int_{\R^d}\frac{\,\left|\Delta_\xi^k(\phi)(0)\right|}{\,|\xi|^{d+\alpha}}\,d\xi <\infty\right\}.
\end{equation}
\item[\rm(ii)] Assume that $k$ is odd and either $\alpha$ is not an integer with $\,0<\alpha< k+1\,$ or
$\,\alpha =k.\,$ Then
\begin{equation}\label{1.2}
\mathcal{F}\left(\p_\alpha(\R^d)\right) = \left\{\phi\in\Phi(\R^d) :
\int_{\R^d}\frac{\,\left| \Delta_\xi^k\left(\Re\,\phi\right)(0)\right|}{\,|\xi|^{d+\alpha}}\,d\xi <\infty\right\},
\end{equation}
which coincides with (\ref{1.1}) in the case $\,0<\alpha<k.$
\end{itemize}
\end{theorem}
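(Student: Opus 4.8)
The plan is to prove each equality by two inclusions, the first routine and the second the real work. \emph{The inclusion} $\mathcal F(\p_\alpha(\R^d))\subseteq(\text{the stated set})$: if $\mu\in\p_\alpha(\R^d)$ and $\phi=\widehat\mu$, then by $(\ref{DOI})$ one has $\Delta_\xi^k(\phi)(0)=\int_{\R^d}(e^{-i\xi\cdot v}-1)^k\,d\mu(v)$, so from the elementary bound $|e^{-it}-1|^k\le\min(2^k,|t|^k)$ together with $|\xi\cdot v|\le|\xi|\,|v|$ one gets, by Tonelli's theorem,
\[
\int_{\R^d}\frac{\bigl|\Delta_\xi^k(\phi)(0)\bigr|}{|\xi|^{d+\alpha}}\,d\xi\le\int_{\R^d}\Bigl(\int_{\R^d}\frac{\min(2^k,|\xi|^k|v|^k)}{|\xi|^{d+\alpha}}\,d\xi\Bigr)d\mu(v)=c\int_{\R^d}|v|^\alpha\,d\mu(v)<\infty,
\]
where $c<\infty$ exactly because $0<\alpha<k$; combined with $|\Delta_\xi^k(\Re\phi)(0)|\le|\Delta_\xi^k(\phi)(0)|$ this disposes of (i) and of the range $\alpha<k$ of (ii). For $k\le\alpha<k+1$ in (ii) I would replace the crude estimate by the sharper $|\Re[(e^{-it}-1)^k]|\le C\min(1,|t|^{k+1})$, which holds because $k$ is odd — so $(e^{-it}-1)^k=(-i)^kt^k+O(t^{k+1})$ is purely imaginary to first order — and the same Tonelli computation gives a constant finite for $0<\alpha<k+1$.

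\emph{The reverse inclusion.} Let $\phi=\widehat\mu$ by Bochner's theorem, with the stated $\xi$-integral finite; I must show $\int_{\R^d}|v|^\alpha\,d\mu<\infty$. The naive attempt — interchange $\int d\mu$ and $\int d\xi$ in $\int\Delta_\xi^k(\phi)(0)\,|\xi|^{-d-\alpha}d\xi$ against the pointwise formula $|v|^\alpha=A(k,\alpha,d)\int\Re[(e^{-i\xi\cdot v}-1)^k]\,|\xi|^{-d-\alpha}d\xi$ (which is Theorem~\ref{theoremM1} applied to $\mu=\delta_v$) — is circular: the absolute integrability needed for Fubini is exactly what is to be shown, and the kernel $\Re[(e^{-it}-1)^k]$ changes sign once $k\ge2$. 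Instead I would first pass to a representation of $|v|^\alpha$ with a nonnegative kernel: letting $2n$ be the least even integer exceeding $\alpha$,
\[
|v|^\alpha=c_{n,\alpha,d}^{-1}\int_{\R^d}\frac{\bigl|e^{-i\xi\cdot v}-1\bigr|^{2n}}{|\xi|^{d+\alpha}}\,d\xi,\qquad c_{n,\alpha,d}=\int_{\R^d}\frac{\bigl|e^{-i\xi_1}-1\bigr|^{2n}}{|\xi|^{d+\alpha}}\,d\xi\in(0,\infty),
\]
the integral being absolutely convergent precisely for $0<\alpha<2n$ (scaling $\xi\mapsto\xi/|v|$ and rotating reduce it to this constant). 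Since $|e^{-i\xi\cdot v}-1|^{2n}\ge0$, Tonelli now applies unconditionally and yields
\[
\int_{\R^d}\frac{1}{|\xi|^{d+\alpha}}\Bigl(\int_{\R^d}\bigl|e^{-i\xi\cdot v}-1\bigr|^{2n}\,d\mu(v)\Bigr)d\xi=c_{n,\alpha,d}\int_{\R^d}|v|^\alpha\,d\mu(v),
\]
so $\mu\in\p_\alpha(\R^d)$ if and only if the left-hand integral converges. It then remains to match this nonnegative-kernel condition with the conditions in $(\ref{1.1})$--$(\ref{1.2})$: one inclusion is the ``$\subseteq$'' half already proved, and for the other I would expand $|e^{-it}-1|^{2n}=(e^{-it}-1)^n(e^{it}-1)^n$ into exponentials and bound $\int|e^{-i\xi\cdot v}-1|^{2n}\,d\mu(v)$ by values of $|\Delta_\eta^k(\phi)(0)|$, respectively $|\Delta_\eta^k(\Re\phi)(0)|$, at increments $\eta$ comparable to $\xi$ — a comparison of difference functionals of different orders, in the spirit of Marchaud's inequality.

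A more self-contained alternative is truncation in $v$: for $R>0$ let $\mu_R$ be $\mu$ restricted to $\{|v|\le R\}$, a compactly supported finite measure, to which Theorem~\ref{theoremM1} applies by part (a) of the Remark following it, so $\int|v|^\alpha\,d\mu_R=A(k,\alpha,d)\int\Delta_\xi^k(\widehat{\mu_R})(0)\,|\xi|^{-d-\alpha}d\xi\ge0$. Writing $\Delta_\xi^k(\widehat{\mu_R})(0)=\Delta_\xi^k(\phi)(0)-\Delta_\xi^k(\widehat{\mu^R})(0)$ with $\mu^R=\mu-\mu_R\ge0$, the $\phi$-part contributes at most the given finite integral, and everything reduces to showing the tail term $A(k,\alpha,d)\int\Delta_\xi^k(\widehat{\mu^R})(0)\,|\xi|^{-d-\alpha}d\xi$ is $\ge0$; granting that, $\int_{|v|\le R}|v|^\alpha\,d\mu$ is bounded uniformly in $R$ and monotone convergence finishes the proof. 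In both routes the hard part is precisely this positivity / interchange-of-limits step, which cannot be done by a plain Fubini argument (circular) and for which the nonnegative-kernel identity above is the decisive tool; the case $k=1$ is the transparent one, since there the kernel $1-\cos(\xi\cdot v)$ is already nonnegative and one application of Tonelli settles the whole statement. Finally, once both sets in $(\ref{1.1})$ and $(\ref{1.2})$ are identified with $\mathcal F(\p_\alpha(\R^d))$, their coincidence for $0<\alpha<k$ is automatic; it also reflects the identity $\Delta_\xi^k(\Re\phi)(0)=\Re[\Delta_\xi^k(\phi)(0)]$ and the symmetry $\Delta_{-\xi}^k(\phi)(0)=\overline{\Delta_\xi^k(\phi)(0)}$, which forces the imaginary part of $\Delta_\xi^k(\phi)(0)$ to integrate to zero against the even kernel $|\xi|^{-d-\alpha}$.
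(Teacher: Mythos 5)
Your ``easy'' inclusion $\mathcal F(\p_\alpha)\subseteq\{\ldots\}$ is correct and is exactly the paper's argument (Tonelli against the kernel bounds $\min(2^k,|t|^k)$, resp.\ $\min(2^{k+1},k|t|^{k+1})$ for odd $k$). The reverse inclusion, however, is not actually proved: you correctly diagnose that a plain Fubini is circular, but both of your proposed repairs terminate at an unestablished step. In Route 1, after rewriting $\int|e^{-i\xi\cdot v}-1|^{2n}\,d\mu(v)=(-1)^n\Delta_\xi^{2n}(\phi)(-n\xi)$, you must dominate a $2n$-th order difference \emph{at a shifted base point} by the hypothesized integral of $|\Delta_\xi^k(\phi)(0)|$, i.e.\ a difference of a different order anchored at the origin; the Marchaud-type inequalities you invoke control moduli of smoothness (suprema over base points), not a single base point, and no such comparison is written down. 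In Route 2, the positivity of $A(k,\alpha,d)\int\Delta_\xi^k(\widehat{\mu^R})(0)\,|\xi|^{-d-\alpha}d\xi$ is exactly the representation formula for $\int|v|^\alpha d\mu^R$, whose applicability presupposes $\mu^R\in\p_\alpha$ --- the very thing being proved. So the ``hard part'' you flag is genuinely open in your write-up.

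The paper closes this gap with a device neither of your routes reaches: truncation in the \emph{frequency} variable rather than in $v$ or via a nonnegative kernel. For $\rho>0$ one sets
\begin{equation*}
K_\rho=\frac{1}{B(k,\alpha,d)}\int_{\R^d}\int_{\{|\xi|>\rho\}}\frac{(e^{-i\xi\cdot v}-1)^k}{|\xi|^{d+\alpha}}\,d\xi\,d\mu(v),
\end{equation*}
where $B(k,\alpha,d)$ is the constant with $\int_{\R^d}(e^{-i\xi\cdot v}-1)^k|\xi|^{-d-\alpha}d\xi=B\,|v|^\alpha$. On $\{|\xi|>\rho\}$ the integrand is dominated by $2^k|\xi|^{-d-\alpha}$, which is integrable against $d\xi\,d\mu(v)$, so Fubini is legitimate \emph{without} knowing $\mu\in\p_\alpha$, and gives $|K_\rho|\le|B|^{-1}\int|\Delta_\xi^k(\phi)(0)|\,|\xi|^{-d-\alpha}d\xi$ uniformly in $\rho$. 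On the other hand, scaling and rotation invariance give $K_\rho=\int|v|^\alpha H_\rho(v)\,d\mu(v)$ with $H_\rho(v)\to1$ as $\rho\to0$ and $H_\rho(v)>0$ for small $\rho$, so Fatou's lemma along $\rho\to0$ yields $\int|v|^\alpha d\mu(v)\le|B|^{-1}\int|\Delta_\xi^k(\phi)(0)|\,|\xi|^{-d-\alpha}d\xi<\infty$. This is the one idea your proposal is missing; everything else (Bochner, the $\Re$ variant for odd $k$, and the automatic coincidence of (\ref{1.1}) and (\ref{1.2}) for $0<\alpha<k$) is in order.
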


\medskip

\begin{remark}
For a fixed $\,\alpha>0,\,$ a different choice of $k$ yields an equivalent characterization.
As it will be shown in section 6, it is also possible to
characterize the Fourier image in terms of functionals involving derivatives. In fact, if
$\,\alpha = m + \gamma\,$ with $m$ an integer and $\,0<\gamma<1\,,$ then
$\,\phi\in\mathcal{F}\left(\p_\alpha(\R^d)\right)\, $ if and only if $\,\phi\in\Phi(\R^d)\cap
C_b^m(\R^d)\,$ and
\begin{equation}
\int_{\R^d}\frac{\left|\left(\partial^\sigma \phi\right)(\xi) - \left(\partial^\sigma \phi\right)(0)\right|}{|\xi|^{d+\gamma}}\,d\xi
<\infty
\end{equation}
for every $\,\sigma\in\mathbb{Z}_+^d(\R^d)\,$ with $\,|\sigma|=m,\,$
where $C^m_b(\R^d)$ denotes the space of $C^m(\R^d)$-functions whose partial derivatives of order up to $m$ are bounded.
\end{remark}

\medskip

Based upon our characterizations of the Fourier images, we shall construct
families of complete probability metrics on $\p_\alpha(\R^d)$ in section 7. To
state some of our main results, we consider pseudo-metrics
\begin{align}
\left\|\phi - \psi\right\|_{\alpha, \,k} &=
\int_{\R^d}\,\frac{\left|\Delta^k_\xi(\phi-\psi)(0)\right|}{|\xi|^{d+\alpha}}\,d\xi\nonumber\\
&= \int_{\R^d}\,\biggl|\sum_{m=1}^k\binom km (-1)^{k-m}
\bigl[\phi(m\xi)-\psi(m\xi)\bigr]\biggr|\,\frac{d\xi}{\,|\xi|^{d+\alpha}}
\end{align}
for each pair of characteristic functions $\phi, \psi$. Let $d_\beta$ be the probability metric
defined as in (\ref{Fb1}) for each $\,\beta>0.\,$

\medskip

\begin{theorem}\label{theorem3}
Let $k$ be a positive integer.
\begin{itemize}
\item[\rm(i)] If $\alpha$ is not an integer with $\,0<\alpha<k\,$ and $\,0<\beta\le\min (\alpha, 1),\,$
then $\p_\alpha(\R^d)$ is complete with respect to the probability metric
\begin{equation}
F_{\alpha, \,\beta}(\mu, \nu) = d_\beta (\mu, \nu) + \left\|\,\widehat\mu - \widehat\nu\,\right\|_{\alpha, \,k}.
\end{equation}
\item[\rm(ii)] Assume that $k$ is odd and either $\alpha$ is not an integer with $\,0<\alpha< k+1\,$ or
$\,\alpha =k.\,$ Let $\,0<\beta\le\min (\alpha, 1).\,$ Then $\p_\alpha(\R^d)$ is complete with respect to the probability metric
\begin{equation}
H_{\alpha, \,\beta}(\mu, \nu) = d_\beta (\mu, \nu) + \left\|\,\Re\,\widehat\mu - \Re\,\widehat\nu\,\right\|_{\alpha, \,k}.
\end{equation}
\end{itemize}
\end{theorem}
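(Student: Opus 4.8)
The plan is, first, to check that $F_{\alpha,\beta}$ and, likewise, $H_{\alpha,\beta}$ are finite probability metrics on $\p_\alpha(\R^d)$, and then to establish completeness by using the $d_\beta$-summand to produce a candidate limit and the $\|\cdot\|_{\alpha,k}$-summand, through Theorem~\ref{theorem2}, both to keep that limit inside $\p_\alpha(\R^d)$ and to upgrade the convergence. For the metric axioms the only point needing care is finiteness, and here the hypothesis $0<\beta\le\min(\alpha,1)$ is exactly what is used: the interpolation bound $|e^{-i\xi\cdot v}-1|\le 2^{\,1-\beta}\,|\xi|^\beta|v|^\beta$ gives $d_\beta(\mu,\nu)\le 2^{\,1-\beta}\bigl(\int|v|^\beta\,d\mu(v)+\int|v|^\beta\,d\nu(v)\bigr)<\infty$ since $\p_\alpha(\R^d)\subset\p_\beta(\R^d)$, while $\|\widehat\mu-\widehat\nu\|_{\alpha,k}<\infty$ is immediate from Theorem~\ref{theorem2}. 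Symmetry and the triangle inequality are inherited from those of $\|\cdot\|_\infty$ and of the seminorm on $L^1\bigl(|\xi|^{-d-\alpha}d\xi\bigr)$ defining $\|\cdot\|_{\alpha,k}$; and $F_{\alpha,\beta}(\mu,\nu)=0$ forces $d_\beta(\mu,\nu)=0$, hence $\widehat\mu=\widehat\nu$ away from the origin and then everywhere by continuity, so $\mu=\nu$ by injectivity of the Fourier transform. The same remarks apply to $H_{\alpha,\beta}$.

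For completeness I would take an $F_{\alpha,\beta}$-Cauchy sequence $(\mu_n)$. Since $F_{\alpha,\beta}\ge d_\beta$, for every $\xi$ we have $|\widehat\mu_n(\xi)-\widehat\mu_m(\xi)|\le|\xi|^\beta\,d_\beta(\mu_n,\mu_m)$, so $(\widehat\mu_n)$ converges pointwise, and uniformly on each ball, to a continuous function $\phi$ with $\phi(0)=1$. By L\'evy's continuity theorem there is $\mu\in\p(\R^d)$ with $\widehat\mu=\phi$ and $\mu_n\to\mu$ weakly; alternatively, $\phi$ is positive definite as a locally uniform limit of positive definite functions, so one may invoke Bochner's theorem instead.

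The crux is to show $\mu\in\p_\alpha(\R^d)$. Being Cauchy, $(\mu_n)$ has finite $\|\cdot\|_{\alpha,k}$-diameter, so by applying the triangle inequality pointwise inside the integral and using $\mu_1\in\p_\alpha(\R^d)$ (hence $\int_{\R^d}|\Delta_\xi^k(\widehat\mu_1)(0)|\,|\xi|^{-d-\alpha}\,d\xi<\infty$ by Theorem~\ref{theorem2}), we get $\sup_n\int_{\R^d}|\Delta_\xi^k(\widehat\mu_n)(0)|\,|\xi|^{-d-\alpha}\,d\xi<\infty$. Since $\widehat\mu_n(m\xi)\to\phi(m\xi)$ for every $m$, the iterated differences converge pointwise, $\Delta_\xi^k(\widehat\mu_n)(0)\to\Delta_\xi^k(\phi)(0)$, and Fatou's lemma yields $\int_{\R^d}|\Delta_\xi^k(\phi)(0)|\,|\xi|^{-d-\alpha}\,d\xi<\infty$. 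By the characterization~(\ref{1.1}) in Theorem~\ref{theorem2} this is exactly the statement $\phi\in\mathcal{F}(\p_\alpha(\R^d))$, i.e.\ $\mu\in\p_\alpha(\R^d)$. Convergence in the full metric is then a lower-semicontinuity argument: given $\varepsilon>0$, choose $N$ with $F_{\alpha,\beta}(\mu_n,\mu_m)<\varepsilon$ for $n,m\ge N$; fixing $n\ge N$ and letting $m\to\infty$, the pointwise convergence $\widehat\mu_m\to\phi$ together with taking the supremum over $\xi$ (for the $d_\beta$-part) and Fatou's lemma (for the $\|\cdot\|_{\alpha,k}$-part) gives $d_\beta(\mu_n,\mu)\le\varepsilon$ and $\|\widehat\mu_n-\widehat\mu\|_{\alpha,k}\le\varepsilon$, hence $F_{\alpha,\beta}(\mu_n,\mu)\le 2\varepsilon$.

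Part~(ii) runs along the same lines, with (\ref{1.1}) replaced by the characterization~(\ref{1.2}) and with the identity $\Delta_\xi^k(\Re\,\widehat\mu_n)(0)=\Re\bigl[\Delta_\xi^k(\widehat\mu_n)(0)\bigr]$, so that the pointwise convergence $\Re\,\widehat\mu_n\to\Re\,\phi$ propagates to the iterated differences and the Fatou estimate goes through verbatim. I expect the only genuine obstacle to be the third step, and even there the difficulty has essentially been front-loaded into Theorem~\ref{theorem2}: it is precisely that characterization of $\mathcal{F}(\p_\alpha(\R^d))$ which lets one pass from ``the limiting characteristic function has finite $\|\cdot\|_{\alpha,k}$-functional'' to ``the limiting measure has finite absolute moment of order $\alpha$''. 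The $d_\beta$-term plays only the auxiliary role of locating the limiting measure — it cannot be dropped, since $\|\cdot\|_{\alpha,k}$ alone is merely a pseudo-metric and does not by itself force a limiting probability measure to exist.
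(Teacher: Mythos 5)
Your proposal is correct and follows essentially the same route as the paper: the paper likewise obtains a candidate limit from the $d_\beta$-part via L\'evy's continuity theorem (Proposition \ref{propositionPM2}), uses Fatou's lemma twice to get finiteness of the limit's $\|\cdot\|_{\alpha,k}$-functional and convergence in the pseudo-metric (Lemma \ref{lemmaME1}), and invokes the characterization of $\mathcal{F}(\p_\alpha(\R^d))$ from Theorem \ref{theoremM2} at exactly the step you identify as the crux; your finiteness check for $d_\beta$ is the $k=1$ case of the paper's Lemma \ref{lemmaL1}. The only cosmetic difference is that the paper proves completeness on the Fourier-image spaces $\Phi^{\,\alpha}_k(\R^d)$, $\Omega^{\,\alpha}_k(\R^d)$ first and then transfers to $\p_\alpha(\R^d)$ by the bijection, whereas you argue directly on $\p_\alpha(\R^d)$.
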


\medskip

We shall present several applications of our results. With the aid of our formulae,
we evaluate explicitly absolute moments of probability measures whose characteristic functions
belong to the so-called Scheonberg classes which include
the stable L\'evy processes, Linnik distributions and Mittag-Leffler processes.
Our computations will be presented in section 8.

Next we apply our characterizations of Fourier images to estimate
the absolute moments of convolutions of probability measures in section 9.
In addition, we consider the heat-diffusion equations from a probabilistic
view-point in section 10. Applying our convolution theorem, we
estimate the moment propagations of solutions in time. We also exploit
Fourier-based probability metrics to obtain the asymptotic behavior
for large and small time, which improves the existing results with a new perspective.

We close this section with indicating additional references. For modern treatise
of Bochner's theorem, we refer to the book of H. Wendland
(\cite{Wend}, 2005). While we focus on Fourier-based ones at present,
there are a variety of probability metrics defined by other means and
we refer to the survey articles of A. Gibbs and F. Edward Su
(\cite{GES}, 2002) and V. M. Zolotarev (\cite{Z}, 1983) for their definitions,
applications and historical backgrounds, where the authors of the former
investigated relationships among popular probability metrics.

\section{Preliminaries}
\begin{lemma}\label{lemmaM}
For $\,-1<\delta<1,\,$
\begin{equation}\label{M1}
\int_0^\infty r^{-2-\delta}\sin^2 (r)\,dr =\frac{\,2^\delta\,\Gamma(1-\delta)\,}{\delta(1+\delta)}\,
\sin\left(\frac{\pi\delta}{2}\right)
\end{equation}
for which the case $\,\delta=0\,$ is interpreted as the limiting value $\,\pi/2.\,$
\end{lemma}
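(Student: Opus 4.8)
The plan is to reduce the identity to the classical Mellin transform of the sine function. First I would use the half-angle identity $\sin^2 r=\tfrac12(1-\cos 2r)$ together with the substitution $x=2r$ to write
\begin{equation*}
\int_0^\infty r^{-2-\delta}\sin^2(r)\,dr=\frac12\int_0^\infty r^{-2-\delta}(1-\cos 2r)\,dr=2^{\delta}\int_0^\infty x^{-2-\delta}(1-\cos x)\,dx .
\end{equation*}
Since $1-\cos x=O(x^{2})$ as $x\to0^{+}$ and is bounded for large $x$, both the left-hand side and the rightmost integral converge absolutely exactly when $-1<\delta<1$, which is the hypothesis; the endpoint value $\delta=0$ will be recovered separately at the end.

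Next, for $\delta\neq0$, I would integrate by parts in $\int_0^\infty x^{-2-\delta}(1-\cos x)\,dx$ with $u=1-\cos x$, $dv=x^{-2-\delta}\,dx$. The boundary contributions vanish at both ends (at $0$ because $x^{-1-\delta}(1-\cos x)=O(x^{1-\delta})$, at $\infty$ because $x^{-1-\delta}\to0$), which yields
\begin{equation*}
\int_0^\infty x^{-2-\delta}(1-\cos x)\,dx=\frac{1}{1+\delta}\int_0^\infty x^{-1-\delta}\sin x\,dx .
\end{equation*}
Then I would invoke the classical evaluation $\int_0^\infty x^{s-1}\sin x\,dx=\Gamma(s)\sin(\pi s/2)$, valid for $-1<\Re s<1$, at $s=-\delta$; this follows by rotating the contour of $\Gamma(s)=\int_0^\infty t^{s-1}e^{-t}\,dt$ onto the imaginary axis and taking imaginary parts. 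Substituting the functional relation $\Gamma(-\delta)=-\Gamma(1-\delta)/\delta$, using $\sin(-\pi\delta/2)=-\sin(\pi\delta/2)$, and restoring the factor $2^{\delta}$ from the scaling, the constants collapse precisely to
\begin{equation*}
\int_0^\infty r^{-2-\delta}\sin^2(r)\,dr=\frac{2^{\delta}\,\Gamma(1-\delta)}{\delta(1+\delta)}\,\sin\!\Bigl(\frac{\pi\delta}{2}\Bigr),
\end{equation*}
which is (\ref{M1}).

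For $\delta=0$ the computation is identical except that the Mellin formula degenerates to the indeterminate $\Gamma(0)\sin 0$; there one instead quotes the Dirichlet integral $\int_0^\infty x^{-1}\sin x\,dx=\pi/2$ to get $\int_0^\infty r^{-2}\sin^2(r)\,dr=\pi/2$, in accordance with $\lim_{\delta\to0}2^{\delta}\Gamma(1-\delta)\sin(\pi\delta/2)/(\delta(1+\delta))=\pi/2$. The step I expect to require the most care is making the evaluation of $\int_0^\infty x^{-1-\delta}\sin x\,dx$ fully rigorous near the edges of the strip $-1<\delta<1$: for $\delta\le0$ this integral converges only conditionally at infinity, while for $\delta\ge0$ a bare Abel-type regularization fails to be integrable at the origin, so the contour rotation cannot be applied verbatim. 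The clean remedy is to insert a factor $e^{-\varepsilon x}$, perform one or two further integrations by parts to bring the exponent into the range where the rotated integral converges absolutely, carry out all limit interchanges there, and only then let $\varepsilon\to0^{+}$, controlling the remaining tail by a final integration by parts.
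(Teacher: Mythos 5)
Your proof is correct and follows essentially the same route as the paper: both reduce the integral to the classical Mellin transform $\int_0^\infty x^{-1-\delta}\sin x\,dx=\Gamma(1-\delta)\sin(\pi\delta/2)/\delta$ via a single integration by parts and a factor-of-two rescaling, you simply running the computation from $\sin^2$ down to $\sin$ while the paper runs it in the opposite direction (and cites the Mellin formula from Erd\'elyi's tables rather than re-deriving it as you sketch). The constants and convergence checks all work out, so no changes are needed.
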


\smallskip

\begin{proof}
Making use of the Mellin transform (\cite{Erd}, p. 317)
\begin{align*}
\int_0^\infty r^{-1-\delta}\sin r\,dr &= 2^{1-\delta}\int_0^\infty r^{-1-\delta} \sin r\,\cos r\,dr\\
&= \frac{\,\Gamma(1-\delta)\,}{\delta}\,\sin\left(\frac{\pi\delta}{2}\right)\,,
\end{align*}
the formula (\ref{M1}) follows upon integrating by parts.
\end{proof}

\medskip

\begin{lemma}\label{lemmaS}
Let $k$ be a positive integer and put
\begin{equation*}
S(k, \alpha) = \sum_{m=1}^k \binom k m (-1)^{k-m}\,m^\alpha \quad(\alpha> 0),
\end{equation*}
the sum appeared in (\ref{M11}).
\begin{itemize}
\item[\rm{(i)}] $\,S(k, \alpha) =0 \,$ for $\,\alpha = 1, \cdots, k-1\,$ and $\,S(k, k) = k!.$
\item[\rm{(ii)}] If $\alpha$ is not an integer or $\,\alpha>k,\,$ there exists
$\,\theta\in (0, 1)\,$ such that
\begin{equation}\label{S3}
S(k, \alpha) = \alpha(\alpha-1)\cdots (\alpha -k+1)\,(\theta k)^{\alpha-k}\,.
\end{equation}
In particular, $\,S(k, \alpha) \ne 0.$
\end{itemize}
\end{lemma}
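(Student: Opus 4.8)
The plan is to recognize $S(k,\alpha)$ as the $k$-th finite difference of the function $t\mapsto t^{\alpha}$ evaluated at $t=0$, and then exploit the standard representation of iterated differences in terms of derivatives. Indeed, writing $f(t)=t^{\alpha}$ we have $S(k,\alpha)=\sum_{m=0}^{k}\binom{k}{m}(-1)^{k-m}f(m)=\Delta_1^k(f)(0)$ (the $m=0$ term vanishes since $f(0)=0$ for $\alpha>0$). For part (i), the classical fact that the $k$-th forward difference annihilates every polynomial of degree $<k$ gives $S(k,\alpha)=0$ whenever $\alpha\in\{1,\dots,k-1\}$; and for $\alpha=k$ the same identity applied to the monomial $t^{k}$ yields $\Delta_1^k(t^k)(0)=k!$, which is exactly $S(k,k)$. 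Alternatively both facts follow from the generating identity $\sum_{m=0}^{k}\binom{k}{m}(-1)^{k-m}x^{m}=(x-1)^{k}$ by applying the operator $(x\,d/dx)^{\alpha}$-type manipulations, but the finite-difference viewpoint is cleanest.

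For part (ii), the key tool is the mean value theorem for divided differences: if $f\in C^{k}$ on an interval containing $0,1,\dots,k$, then there is $\eta\in(0,k)$ with
\begin{equation*}
\Delta_1^k(f)(0)=f^{(k)}(\eta).
\end{equation*}
Apply this with $f(t)=t^{\alpha}$, which is $C^{\infty}$ on $(0,\infty)$; one must be slightly careful because $f$ may fail to be smooth at $t=0$ when $\alpha$ is not an integer, so I would instead use the integral (Hermite–Genocchi) representation
\begin{equation*}
\Delta_1^k(f)(0)=\int_{\Delta^{k}} f^{(k)}\!\bigl(t_1+\cdots+t_k\bigr)\,dt_1\cdots dt_k,
\end{equation*}
where $\Delta^{k}$ is the standard simplex, so that the argument of $f^{(k)}$ stays in the open interval $(0,k)$ and no boundary issue arises. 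Since $f^{(k)}(t)=\alpha(\alpha-1)\cdots(\alpha-k+1)\,t^{\alpha-k}$, we get $S(k,\alpha)=\alpha(\alpha-1)\cdots(\alpha-k+1)\,\xi^{\alpha-k}$ for some $\xi\in(0,k)$; writing $\xi=\theta k$ with $\theta\in(0,1)$ gives (\ref{S3}).

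Finally, to conclude $S(k,\alpha)\neq 0$ under the hypothesis that $\alpha$ is not an integer or $\alpha>k$: in the first case none of the factors $\alpha,\alpha-1,\dots,\alpha-k+1$ vanishes (they are non-integers), and $\xi^{\alpha-k}>0$; in the second case $\alpha>k$ forces $\alpha-j>0$ for $j=0,\dots,k-1$, so again the product is a positive number times $\xi^{\alpha-k}>0$. The main obstacle — really the only subtle point — is justifying the divided-difference mean value theorem for $t^{\alpha}$ with non-integer $\alpha$ down to (or near) the endpoint $t=0$; routing through the Hermite–Genocchi integral formula, whose integrand involves $f^{(k)}$ only on the open simplex where $t^{\alpha-k}$ is perfectly smooth, removes that difficulty cleanly. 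Everything else is elementary.
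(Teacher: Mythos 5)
Your proposal is correct and follows essentially the same route as the paper: both reduce everything to the identity $S(k,\alpha)=\Delta_1^k(t^\alpha)(0)$ plus a mean value theorem for $k$-th finite differences, which the paper establishes via the Lagrange interpolating polynomial and repeated applications of Rolle's theorem (needing only $f\in C^k((0,\infty))$ continuous at $0$, so the endpoint causes no difficulty there either), while you justify it through the integral representation $\Delta_1^k(f)(0)=\int_{[0,1]^k}f^{(k)}(t_1+\cdots+t_k)\,dt_1\cdots dt_k$ --- note the domain should be the unit cube rather than the standard simplex for the normalization to come out right; this is precisely the paper's own Lemma \ref{lemmaP1}. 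Either justification is sound, and your treatment of part (i) and of the nonvanishing of the product $\alpha(\alpha-1)\cdots(\alpha-k+1)$ matches the paper's.
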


\smallskip

\begin{proof}
Our computation is based on a version of mean value theorems for finite differences:
Given
$\,r>0,\,$ if $\,f\in C^k((0, \infty))\,$ and continuous at $0$, then there exists
$\zeta$ between $0$ and $kr$ such that
\begin{equation}\label{S4}
\frac{\,\Delta_r^k(f)(0)\,}{r^k} = f^{(k)}(\zeta).
\end{equation}
Once this mean value theorem were verified, it is a simple matter to obtain the stated results by
an application with $\,f(x) = x^\alpha\,$ and $\,r=1.\,$

To prove (\ref{S4}), we consider the Lagrange polynomial $P$ interpolating $f$ at the $(k+1)$ points
$\,0, r, \cdots, kr,\,$ that is,
$$P(x)=\sum_{m=0}^k\,f(a_m)\,\biggl[\,\,
\prod_{\substack{0\le j\le k\\ j\neq m}}\,\frac{x-a_j}{a_m -
a_j}\,\biggr]\quad\text{with}\quad a_j= jr\,.$$
Let $\,g(x) = f(x) - P(x)\,.$ By Rolle's theorem applied
successively to the functions $\,g, g', \cdots, g^{(k-1)}\,,$ there exists
$\,\zeta\in (0, kr)\,$ with
$\,g^{(k)}(\zeta) =0.\,$ An easy computation yields
\begin{align*}
P^{(k)}(x) &= k! \sum_{m=0}^k\,f(a_m)\,\biggl[\,\,\prod_{\substack{0\le j\le k\\
j\neq m}}\,\frac{1}{a_m - a_j}\,\biggr] = \frac{\Delta_r^k (f)(0)}{r^k}
\end{align*}
for any $x$ and the desired mean value property follows at once.
\end{proof}

\medskip

\begin{lemma}\label{lemmaOC}
\rm{(Order-continuity)} For a given $\,\alpha_0>0,\,$ let
$\,\mu\in\mathcal{P}_{\alpha_0}(\R^d)\,$ and consider the absolute moment function
\begin{equation}\label{OC1}
M_\mu(\alpha) = \int_{\R^d} |v|^\alpha d\mu(v)\quad(0<\alpha\le\alpha_0).
\end{equation}
Then $M_\mu$ is continuous on $(0, \alpha_0)$ and left-continuous at $\alpha_0.$
\end{lemma}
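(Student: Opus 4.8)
The plan is to establish continuity of $M_\mu(\alpha)=\int_{\R^d}|v|^\alpha\,d\mu(v)$ on $(0,\alpha_0]$ (from the left at $\alpha_0$) by a dominated-convergence argument, with the only subtlety being the construction of a single integrable dominating function that works uniformly as $\alpha$ ranges over a compact subinterval. Fix $\alpha_*\in(0,\alpha_0]$ and let $\alpha_n\to\alpha_*$ with $0<\alpha_n\le\alpha_0$ (and $\alpha_n\le\alpha_0$ automatically if $\alpha_*=\alpha_0$, which is why only left-continuity is claimed there). Pointwise, $|v|^{\alpha_n}\to|v|^{\alpha_*}$ for every $v\in\R^d$ since $t\mapsto t^s$ is continuous in $s$. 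The task is then to dominate $|v|^{\alpha_n}$ by an $L^1(d\mu)$ function independent of $n$.

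First I would split $\R^d$ according to whether $|v|\le 1$ or $|v|>1$. On $\{|v|\le 1\}$ we have $|v|^{\alpha_n}\le 1$, which is $\mu$-integrable since $\mu$ is a probability measure. On $\{|v|>1\}$ we have $|v|^{\alpha_n}\le|v|^{\alpha_0}$ because $\alpha_n\le\alpha_0$ and $|v|>1$; and $|v|^{\alpha_0}\in L^1(d\mu)$ precisely because $\mu\in\mathcal{P}_{\alpha_0}(\R^d)$. Hence the function $g(v)=\mathbf 1_{\{|v|\le 1\}}+|v|^{\alpha_0}\mathbf 1_{\{|v|>1\}}$ dominates $|v|^{\alpha_n}$ for all $n$ and satisfies $\int g\,d\mu\le 1+\int|v|^{\alpha_0}\,d\mu<\infty$. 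The dominated convergence theorem then gives $M_\mu(\alpha_n)\to M_\mu(\alpha_*)$, which is exactly the asserted continuity on $(0,\alpha_0)$ and left-continuity at $\alpha_0$.

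The main obstacle — really the only place any care is needed — is the uniform domination on the unbounded region: one must exploit the constraint $\alpha_n\le\alpha_0$ together with $|v|>1$ to pass from the varying exponent to the fixed exponent $\alpha_0$, rather than attempting a bound like $|v|^{\alpha_n}\le|v|^{\sup_n\alpha_n}$ without controlling whether $\sup_n\alpha_n\le\alpha_0$. Since the hypothesis only supplies finiteness of the moment of order exactly $\alpha_0$, restricting to sequences with $\alpha_n\le\alpha_0$ is essential, and this is the structural reason the statement asserts left-continuity, not two-sided continuity, at the endpoint $\alpha_0$. (If one knew $\mu\in\mathcal{P}_{\alpha_0+\epsilon}(\R^d)$ the same argument with $\alpha_0$ replaced by $\alpha_0+\epsilon$ would give two-sided continuity at $\alpha_0$, which is the remark used implicitly in the even-integer limiting formula.) No further ingredients are needed; in particular the monotonicity $\mathcal{P}_\beta\subset\mathcal{P}_\alpha$ for $\alpha<\beta$ noted in the introduction is just a by-product of the same split and is not required for the proof.
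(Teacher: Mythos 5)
Your proof is correct, and it takes a genuinely different route from the paper's. The paper invokes H\"older's inequality to obtain the monotonicity $\left[M_\mu(\beta)\right]^{1/\beta}\le\left[M_\mu(\alpha)\right]^{1/\alpha}$ for $0<\beta<\alpha\le\alpha_0$ and then asserts that continuity of $\alpha\mapsto\left[M_\mu(\alpha)\right]^{1/\alpha}$ ``follows''; as stated this is terse to the point of being incomplete, since a monotone function can jump, and what is really being used is the log-convexity of $\alpha\mapsto M_\mu(\alpha)$ coming from the same interpolation inequality (which gives continuity on the open interval) together with a separate endpoint argument. Your dominated-convergence argument, with the single dominating function $g(v)=\mathbf 1_{\{|v|\le 1\}}+|v|^{\alpha_0}\mathbf 1_{\{|v|>1\}}$ exploiting only $\alpha_n\le\alpha_0$, treats the interior points and the left endpoint in one stroke and is fully airtight; what it gives up is the quantitative monotonicity/interpolation information $M_\mu(\beta)\le\left[M_\mu(\alpha)\right]^{\beta/\alpha}$, which the paper reuses later (e.g.\ in the convolution estimates of Theorem \ref{theoremC1}), whereas your approach yields only the qualitative continuity statement. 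Your closing remark correctly identifies why only left-continuity at $\alpha_0$ can be claimed and how the even-integer limiting formula in Remark 1.1 relies on the stronger hypothesis $\mu\in\p_{2n+\epsilon}(\R^d)$.
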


\smallskip

\begin{proof}
H\"older's inequality implies
\begin{equation}\label{OC2}
\left[M_\mu(\beta)\right]^{1/\beta}\,\le\,\left[M_\mu(\alpha)\right]^{1/\alpha}
\end{equation}
for $\,0<\beta<\alpha\le\alpha_0,\,$ which shows $\,\alpha\mapsto\,\left[M_\mu(\alpha)\right]^{1/\alpha}\,$
is continuous on $(0, \alpha_0)$ and left-continuous at $\alpha_0.$ The assertion follows trivially.
\end{proof}

\section{Evaluations of oscillatory integrals}

In our evaluation process below, we shall use the following identities for the gamma function
which are valid for all complex $z$ unless the evaluation variables coincide with negative integers or zero:
\begin{align}
\Gamma(z+1) &= z\,\Gamma(z)\,,\label{G1}\\
\Gamma(1-z)\,\Gamma(z) &= \pi/\sin (\pi z)\,,\label{G2}\\
\Gamma(z)\,\Gamma\left(z+\frac12\right) &= 2^{-2z+1}\,\sqrt{\pi}\,\Gamma(2z)\,.\label{G3}
\end{align}

\medskip

\begin{lemma}\label{lemmaI}
Let $k$ be a positive integer and put
\begin{equation}\label{I1}
I(k, \alpha) = \int_0^\infty r^{-1-\alpha} \left[ \left(e^{-ir} - 1\right)^k + \left(e^{ir} - 1\right)^k\right]\,dr.
\end{equation}
Suppose $\,0<\alpha<k+1\,$ when $k$ is odd and $\,0<\alpha<k\,$ when $k$ is even.
Then the integral converges absolutely with the following explicit values:
\begin{itemize}
\item[\rm{(i)}] $\,I(k, \alpha) = 0\,$ for $\,\alpha =1, \cdots, k-1.\,$
\item[\rm{(ii)}] If $\alpha$ is not an integer or $\,\alpha =k\,$ with an odd integer $k$, then
\begin{equation}\label{I3}
I(k, \alpha) = - \frac{\,\pi\,S(k, \alpha)\,}{\,\,\sin\left(\pi \alpha/2\right)\,\Gamma(\alpha +1)\,}
\end{equation}
where $S(k, \alpha)$ denotes the sum defined in (\ref{M11}) and Lemma \ref{lemmaS}.
\end{itemize}
\end{lemma}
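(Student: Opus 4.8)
The plan is to evaluate $I(k,\alpha)$ by reducing the $k$th power to a sum of exponentials and treating each term via the Mellin-type identity already in play. First I would expand
$$\left(e^{-ir}-1\right)^k + \left(e^{ir}-1\right)^k
= \sum_{m=0}^k \binom km (-1)^{k-m}\left(e^{-imr}+e^{imr}\right)
= 2\sum_{m=0}^k \binom km (-1)^{k-m}\cos(mr).$$
The $m=0$ term contributes $2(-1)^k$, which is a constant; crucially it cancels against part of the sum because $\sum_{m=0}^k\binom km(-1)^{k-m}=0$, so the bracket is $2\sum_{m=1}^k\binom km(-1)^{k-m}\left(\cos(mr)-1\right)$ plus a correction that vanishes. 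This rewriting as a combination of $\cos(mr)-1$ is what makes the integral converge near $r=0$: each $\cos(mr)-1 = O(r^2)$, so absolute convergence at the origin needs $\alpha<2$ only termwise, but after the cancellation the leading $r^2$ coefficients themselves sum to $-\tfrac12 S(k,2)$, and by Lemma \ref{lemmaS}(i) this vanishes when $k\ge 3$, pushing convergence to $\alpha<k$ (even $k$) or, using the extra sign symmetry, $\alpha<k+1$ (odd $k$); at infinity $r^{-1-\alpha}$ times a bounded function is integrable for $\alpha>0$. I would make this convergence bookkeeping precise first, perhaps by grouping so that the integrand is $O(r^{\min(k,k+1)})$ near $0$ according to parity.

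Next, for the actual evaluation I would use the known Mellin transform $\int_0^\infty r^{-1-\alpha}\left(\cos(mr)-1\right)dr = m^\alpha\int_0^\infty s^{-1-\alpha}(\cos s-1)\,ds$ by the substitution $s=mr$, valid for $0<\alpha<2$, and the classical value $\int_0^\infty s^{-1-\alpha}(\cos s-1)\,ds = -\Gamma(-\alpha)\cos(\pi\alpha/2) = -\dfrac{\pi}{2\sin(\pi\alpha/2)\Gamma(\alpha+1)}$ after applying the reflection and recursion formulae (\ref{G1})--(\ref{G2}). Summing over $m=1,\dots,k$ with weights $2\binom km(-1)^{k-m}$ produces exactly $-\dfrac{\pi\,S(k,\alpha)}{\sin(\pi\alpha/2)\,\Gamma(\alpha+1)}$, which is (\ref{I3}). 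For the ranges $\alpha\ge 2$ I cannot split termwise, so instead I would argue by analytic continuation: both sides of (\ref{I3}) are analytic in $\alpha$ on the strip of convergence (the left side because one can differentiate under the integral sign, using the improved decay from the cancellations to dominate), they agree on the initial interval $0<\alpha<2$ (minus integer points where part (i) applies), hence they agree on the whole strip. The vanishing claim (i) for $\alpha=1,\dots,k-1$ then drops out of the right-hand side via Lemma \ref{lemmaS}(i), or can be seen directly since at integer $\alpha$ the factor $\sin(\pi\alpha/2)$ and the zero of $S(k,\alpha)$ conspire — for odd integer $\alpha<k$ one gets $0/\text{finite}$ from $S$, for even integer $\alpha<k$ one should check the integral is genuinely $0$ by a symmetry/parity argument on the integrand.

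I expect the main obstacle to be the convergence analysis at $r=0$ in the extended ranges, specifically verifying that the Taylor coefficients of $\sum_{m=1}^k\binom km(-1)^{k-m}\cos(mr)$ up to the appropriate order vanish. These coefficients are, up to constants, $S(k,2j)$ for the even powers $r^{2j}$, and Lemma \ref{lemmaS}(i) kills $S(k,0),\dots,S(k,k-1)$; the parity gain for odd $k$ comes from the fact that when $k$ is odd the expansion has no $r^k$ term (odd powers of $r$ appear with coefficient zero here since we only have cosines), so the first surviving term is $r^{k+1}$. Making the analytic-continuation step rigorous — choosing the correct domain, justifying holomorphy of the integral by a dominated-convergence bound uniform on compact subsets — is routine but must be done carefully to cover the full stated ranges. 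The gamma-function manipulations (\ref{G1})--(\ref{G3}) at the end are purely mechanical and I would not belabor them.
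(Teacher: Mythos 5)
Your decomposition of the integrand into $2\sum_{m=1}^k\binom km(-1)^{k-m}\bigl(\cos(mr)-1\bigr)$ is exactly the paper's (there it appears as $-4\sum_{m=1}^k\binom km(-1)^{k-m}\sin^2(mr/2)$), and your termwise Mellin evaluation on $0<\alpha<2$ reproduces the paper's computation via Lemma \ref{lemmaM}. Your convergence bookkeeping is also the paper's in substance: from the closed form $E_k(r)=\pm 2^{k+1}\sin^k(r/2)\sin(kr/2)$ (odd $k$) resp.\ $\cos(kr/2)$ (even $k$) the paper gets $|E_k(r)|\le\min\bigl(2^{k+1},kr^{k+1}\bigr)$ resp.\ $\min\bigl(2^{k+1},2r^{k}\bigr)$, which is the quantitative form of your Taylor-coefficient cancellation (your ``$O(r^{\min(k,k+1)})$'' should read $O(r^{k+1})$ for odd $k$ and $O(r^{k})$ for even $k$). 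Where you genuinely diverge is the extension to $\alpha\ge 2$: the paper integrates by parts $2\ell$ times on each interval $[2\ell,2\ell+2)$ and reruns the termwise computation on $E_k^{(2\ell)}$, whereas you continue analytically in $\alpha$. Your route is legitimate and arguably cleaner: the bound above makes $\alpha\mapsto I(k,\alpha)$ holomorphic on the strip $0<\Re\alpha<k$ (resp.\ $k+1$), and the identity theorem propagates (\ref{I3}) from $(0,1)$ to every non-integral $\alpha$ in the strip and to $\alpha=k$ for odd $k$, where the right-hand side is holomorphic. (Minor slip: the intermediate value should be $+\Gamma(-\alpha)\cos(\pi\alpha/2)$, not $-\Gamma(-\alpha)\cos(\pi\alpha/2)$; your final constant $-\pi/\bigl(2\sin(\pi\alpha/2)\Gamma(\alpha+1)\bigr)$ is correct.)

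The gap is precisely the point you flagged: part (i) at even integers. No ``symmetry/parity argument'' will show $I(k,2\ell)=0$, because the claim is false. By the continuity in $\alpha$ that your own argument establishes, $I(k,2\ell)$ must equal $\lim_{\alpha\to 2\ell}$ of the right-hand side of (\ref{I3}); since $S(k,\cdot)$ and $\sin(\pi\cdot/2)$ both have simple zeros at $\alpha=2\ell$, L'H\^opital gives
$$I(k,2\ell)=\frac{2(-1)^{\ell+1}}{(2\ell)!}\sum_{m=1}^k\binom km(-1)^{k-m}m^{2\ell}\ln m,$$
which is generically nonzero. Concretely $I(3,2)=9\ln 3-12\ln 2\approx 1.57$; you can confirm this directly by two integrations by parts and the Frullani integral $\int_0^\infty\bigl(\cos(ar)-\cos(br)\bigr)r^{-1}dr=\ln(b/a)$. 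So your proof cannot be completed for even $\alpha$ in part (i) --- but neither can the paper's: its formula (\ref{I16}) degenerates at $\alpha=2\ell$ to $S(k,2\ell)\cdot\int_0^\infty r^{-1}\sin^2(r)\,dr=0\cdot\infty$, because the termwise change of variables used to derive (\ref{I16}) is only justified for $2\ell<\alpha<2\ell+2$. The odd-integer half of (i) is fine ($S(k,\alpha)=0$ while $\sin(\pi\alpha/2)\ne 0$), and the error is harmless downstream, since Theorem \ref{theoremM1} only ever uses $I(k,\alpha)$ for non-integral $\alpha$ or for $\alpha=k$ with $k$ odd; but you should restrict part (i) to odd $\alpha$ rather than try to prove it as stated.
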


\smallskip

\begin{proof} Put $\,E_k(r) = \left(e^{-ir} - 1\right)^k + \left(e^{ir} - 1\right)^k.\,$
Since
\begin{equation}\label{I5}
E_k(r) = \left\{\aligned &{(-1)^{\frac{k+1}{2}}\, 2^{k+1} \,\sin^k\left(\frac{r}{2}\right)\,\sin \left(\frac{kr}{2}\right)}
\quad \text{if}\,\,k\,\,\text{is odd,}\\
&{\,\,\,\,\,(-1)^{\frac k2}\, 2^{k+1} \,\sin^k\left(\frac{r}{2}\right)\,\cos \left(\frac{kr}{2}\right)}
\quad \text{if}\,\,k\,\,\text{is even,}
\endaligned\right.
\end{equation}
it is elementary to deduce
\begin{equation}\label{I5-1}
\left|E_k(r)\right|\le \left\{\aligned &{\min \left(2^{k +1},\, k r^{k+1}\right)}
\quad \text{if}\,\,k\,\,\text{is odd,}\\
&{\min \left(2^{k+1},\, 2r^k \right)}
\quad\quad\text{if}\,\,k\,\,\text{is even.}
\endaligned\right.
\end{equation}
Thus it is evident the integral of (\ref{I1}) converges absolutely.

To evaluate $I(k, \alpha)$, we begin with the case $\,0<\alpha<2.\,$
Writing
\begin{align}\label{I7}
E_k(r) &= \sum_{m=0}^k \binom km (-1)^{k-m} \,\left(e^{-i mr } + e^{imr} -2\right)\nonumber\\
&= -4 \sum_{m=1}^k \binom km (-1)^{k-m}\,\sin^2 \left(\frac{mr}{2}\right)
\end{align}
and changing variables, we have
\begin{align}\label{I8}
I(k, \alpha) &= -2^{2-\alpha}\, \sum_{m=1}^k \binom km (-1)^{k-m}\,m^\alpha\left(\int_0^\infty r^{-1 -\alpha}\sin^2 (r)\,dr\right)
\nonumber\\
&= -2^{2-\alpha} \,S(k, \alpha)\, \int_0^\infty r^{-1 -\alpha}\sin^2 (r)\,dr\,.
\end{align}

With the aid of Lemma \ref{lemmaS} and \ref{lemmaM}, we compute $I(k, \alpha)$ explicitly.
\begin{itemize}
\item [(a)] $\,I(k, 1) = 0\,$ for $\,k\ge 2\,$ and
$$I(1, 1) = -2 \int_0^\infty r^{-2}\sin^2(r)\,dr = -\pi\,.$$
\item[(b)] For $\,\alpha\ne 1,\,$
\begin{align*}
I(k, \alpha) &= \frac{\,- 2\,S(k, \alpha)\,\Gamma(2 -\alpha)\,}{\alpha(\alpha-1)}\,\sin\left(\frac{\pi(\alpha-1)}{2}\right)\\
&= \frac{\,-\pi\,S(k, \alpha)\,}{\,\sin (\pi\alpha/2)\,\Gamma(\alpha +1)}\,,
\end{align*}
where the latter follows from (\ref{G1})-(\ref{G3}) and trigonometric identities.
\end{itemize}
This proves validity of the stated formula when $\,0<\alpha<2.\,$ In particular,
the cases of $\,k=1, 2\,$ are completely proved.

Assuming now $\,k\ge 4\,$ if $k$ is even and $\,k\ge 3\,$ if $k$ is odd, we divide the range of $\alpha$
into intervals $\,[2\ell, 2\ell +2)\,$ for $\,\ell =1, \cdots, (k-2)/2\,$ if $k$ is even and
for $\,\ell = 1, \cdots, (k-1)/2\,$ if $k$ is odd. Let us take one of such intervals and consider the case
$\,2\ell\le\alpha< 2\ell +2.\,$

Integrating by parts successively, if we exploit the fact $E_k(r)$ has a zero of order
at least $2(\ell +1)$ at $\,r=0\,$ and
$\,E_k(r), E_k{'}(r), \cdots, E_k^{(2\ell-1)}(r)\,$ are bounded, it is not hard to deduce
\begin{equation}\label{I13}
I(k, \alpha) = \frac{1}{\alpha(\alpha-1)\cdots (\alpha-2\ell +1)}\,
\int_0^\infty r^{2\ell - 1-\alpha} E_{k}^{(2\ell)}(r)\,dr\,.
\end{equation}
By using the identity
\begin{equation*}
\sum_{m=1}^k \binom km (-1)^{k-m}\,m^{2\ell} = S(k, 2\ell) = 0,
\end{equation*}
we may write
\begin{align}\label{I15}
E_k^{(2\ell)}(r) &= (-1)^\ell\sum_{m=1}^k \binom km (-1)^{k-m} \,m^{2\ell}\,\left(e^{-i mr } + e^{imr} -2\right)\nonumber\\
&= 4 (-1)^{\ell+1}\sum_{m=1}^k \binom km (-1)^{k-m}\,m^{2\ell}\,\sin^2 \left(\frac{mr}{2}\right)\,.
\end{align}
Inserting this into (\ref{I13}) and changing variables, we find
\begin{equation}\label{I16}
I(k, \alpha) = \frac{(-1)^{\ell+1} 2^{2(\ell+1)-\alpha}\,S(k, \alpha)}{\alpha(\alpha-1)\cdots (\alpha-2\ell +1)}\,
\int_0^\infty r^{2\ell -1 -\alpha}\sin^2 (r)\,dr\,.
\end{equation}

Lemma \ref{lemmaS} and \ref{lemmaM} yield the following:
\begin{itemize}
\item[(a)] $\,I(k, 2\ell) = I(k, 2\ell +1) = 0\,$ except the case $\,2\ell +1 = k,\,$ that is, when $k$
is odd and the interval corresponds to the last one for which
$$I(k, k) = 2(-1)^{\frac{k+1}{2}} \int_0^\infty r^{-2}\sin^2(r)\,dr = (-1)^{\frac{k+1}{2}}\pi\,.$$
\item[(b)] For a non-integral $\,2\ell<\alpha<2(\ell +1),\,$
$$I(k, \alpha) = \frac{2(-1)^{\ell+1}\,S(k, \alpha)\,\Gamma(2\ell +2 -\alpha)}{\alpha(\alpha-1)\cdots (\alpha-2\ell -1)}\,\sin\left(\frac{\pi(\alpha-2\ell-1)}{2}\right)\,.$$
By using the identities
\begin{align*}
\frac{\Gamma(2\ell +2 -\alpha)}{\alpha(\alpha-1)\cdots (\alpha-2\ell -1)}
&= \frac{-\pi}{\sin(\pi\alpha)\,\Gamma(\alpha +1)}\,,\\
\sin\left(\frac{\pi(\alpha-2\ell-1)}{2}\right)
&=(-1)^{\ell +1} \,\cos(\pi\alpha/2)\,,
\end{align*}
we easily deduce
$$I(k, \alpha) =\frac{\,\,-\pi\,S(k, \alpha)\,\,}{\,\sin (\pi\alpha/2)\,\Gamma(\alpha +1)}\,.$$
\end{itemize}

This proves validity of the stated formula in the case $\,2\ell\le\alpha <2(\ell +1).\,$
As it is verified independently of $\ell$, the proof is now complete.
\end{proof}

\section{Proof of Theorem \ref{theoremM1}}
Concerning part (i), let us assume that $\,\mu\in\p_\alpha(R^d)\,$ with $\,0<\alpha<k.\,$
For the Fourier transform $\,\phi=\widehat\mu,$ the identity (\ref{DOI}) gives
\begin{equation}\label{Pr1}
\int_{\R^d} \frac{\,\Delta_\xi^k(\phi)(0)}{|\xi|^{d+\alpha}}\,d\xi
=\int_{\R^d}\left[\int_{\R^d}\left(e^{-i\xi\cdot v} -1 \right)^k d\mu(v)\right] \frac{d\xi}{|\xi|^{d+\alpha}}
\end{equation}

For each non-zero $\,v\in\R^d,$ we observe
\begin{equation*}
\int_{\R^d}\frac{\,\left|e^{-i\xi\cdot v} -1\right|^k}{|\xi|^{d+\alpha}}\,d\xi
=|v|^\alpha \int_{\R^d}\frac{\,\left|e^{-i\eta\cdot v/|v|} -1\right|^k}{|\eta|^{d+\alpha}}\,d\eta.
\end{equation*}
In light of the estimate $\,|e^{-i\xi\cdot u} -1|^k\le\min( 2^k, \,|\xi|^k)\,$ for any unit vector $u$,
the integral on the right side obviously converges for $\,0<\alpha<k.$ Due to rotation invariance,
it is also independent of $v$ and hence
\begin{equation}\label{Pr2}
\int_{\R^d}\frac{\,\left|e^{-i\xi\cdot v} -1\right|^k}{|\xi|^{d+\alpha}}\,d\xi
\le C |v|^\alpha
\end{equation}
with a constant $\,C= C(k, \alpha, d)>0.$ Consequently,
\begin{equation}\label{Pr3}
\int_{\R^d}\int_{\R^d}\frac{\,\left|e^{-i\xi\cdot v} -1\right|^k}{|\xi|^{d+\alpha}}\,d\xi d\mu(v)
\le C\int_{\R^d} |v|^\alpha d\mu(v),
\end{equation}
which is finite for $\,\mu\in\p_\alpha(\R^d).\,$ By Fubini's theorem,
we may interchange the order of integrations in (\ref{Pr1}) to get
\begin{equation}\label{Pr4}
\int_{\R^d} \frac{\,\Delta_\xi^k(\phi)(0)}{|\xi|^{d+\alpha}}\,d\xi
=\int_{\R^d}\left[\int_{\R^d}\frac{\left(e^{-i\xi\cdot v} -1 \right)^k}{|\xi|^{d+\alpha}} d\xi\right] d\mu(v).
\end{equation}

To evaluate the inner integral, we first deal with the case $\,d\ge 2.\,$ For a fixed non-zero $\,v\in\R^d,\,$
we use the polar coordinates to write
\begin{equation*}
\int_{\R^d}\frac{\,\left(e^{-i\xi\cdot v} -1 \right)^k}{|\xi|^{d+\alpha}}\,d\xi
= |v|^\alpha\int_0^\infty r^{-1-\alpha}
\left[\int_{\s^{d-1}}\left(e^{-i r\omega\cdot v/|v|} -1 \right)^k\,d\sigma\right] dr,
\end{equation*}
where $\s^{d-1}$ denotes the unit sphere in $\R^d$.
Parameterizing $\s^{d-1}$ by the angle $\,\theta\in [0, \pi]\,$ defined by
$\,\omega\cdot v = \cos\theta\, |v|\,$ for each $\,\omega\in\s^{d-1},\,$ we have
\begin{align*}
&\int_{\s^{d-1}}\left(e^{-i r\omega\cdot v/|v|} -1 \right)^k\,d\sigma\nonumber\\
&\qquad =  \left|\s^{d-2}\right| \int_0^\pi \left(e^{-i r\cos\theta} -1 \right)^k\sin^{d-2}(\theta)\,d\theta\nonumber\\
&\qquad = \left|\s^{d-2}\right|\int_0^1\left[\left(e^{-irt} -1 \right)^k + \left(e^{irt} -1 \right)^k\right](1-t^2)^{\frac{d-3}{2}}\,dt,
\end{align*}
where $\left|\s^{d-2}\right|$ denotes the area of $(d-2)$-dimensional unit sphere
with the obvious interpretation $\,\left|\s^{d-2}\right| = 2\,$ when $\,d=2.$
Interchanging the order of integrations, we are led to
\begin{align}\label{Pr8}
\int_{\R^d}\frac{\,\left(e^{-i\xi\cdot v} -1 \right)^k}{|\xi|^{d+\alpha}}\,d\xi
&=  |v|^\alpha\,I(k, \alpha)\left|\s^{d-2}\right|
\int_0^1 t^\alpha (1-t^2)^{\frac{d-3}{2}}dt\nonumber\\
&=  -\, \frac {\pi^{\frac{d+1}{2}}\, S(k, \alpha)\Gamma\left(\frac{\alpha +1}{2}\right)}
 {\,\sin \left(\frac{\alpha\pi}{2}\right)
\Gamma(\alpha +1)\Gamma\left(\frac{\alpha + d}{2}\right)}\cdot |v|^\alpha
\end{align}
with $I(k, \alpha)$ the constant defined in (\ref{I1}) of Lemma \ref{lemmaI}.

In the case $\,d=1,\,$ it is simple to see the evaluation formula (\ref{Pr8})
continues to be valid. Integrating both sides, we obtain
\begin{equation}\label{Pr9}
\int_{\R^d} \frac{\,\Delta_\xi^k(\phi)(0)}{|\xi|^{d+\alpha}}\,d\xi
= -\, \frac {\pi^{\frac{d+1}{2}}\, S(k, \alpha)\Gamma\left(\frac{\alpha +1}{2}\right)}
 {\,\sin \left(\frac{\alpha\pi}{2}\right)
\Gamma(\alpha +1)\Gamma\left(\frac{\alpha + d}{2}\right)}\,\int_{\R^d}|v|^\alpha d\mu(v).
\end{equation}
According to Lemma \ref{lemmaS}, the sum $S(k, \alpha)$ is non-zero when $\alpha$ is non-integral and hence
(\ref{Pr9}) is equivalent to (\ref{M12}) for the multiplicative constant of (\ref{Pr9})
is nothing but the reciprocal of $A(k, \alpha, d)$ defined in (\ref{M11}).

Our proof for part (i) is complete.

As to part (ii), assume that $k$ is an odd integer and $\,0<\alpha<k+1.\,$ For the Fourier
transform $\,\phi=\widehat\mu,\,$ we have
$$ \Delta_\xi^k\left(\Re\,\phi\right)(0) = \int_{\R^d} \frac{(e^{-i\xi\cdot v} -1)^k +
(e^{i\xi\cdot v} -1)^k}{2}\,d\mu(v).$$
Owing to the evident estimate
\begin{equation*}
\left|\left(e^{-i\xi\cdot u} -1 \right)^k + \left(e^{i\xi\cdot u} -1 \right)^k\right|
\le \min\left(2^{k+1}, \,k |\xi|^{k+1}\right)
\end{equation*}
valid for any unit vector $u$, it is easy to see that
\begin{equation}
\int_{\R^d}\int_{\R^d}\frac{\bigl|(e^{-i\xi\cdot v} -1)^k + (e^{i\xi\cdot v} -1)^k\bigr|}{|\xi|^{d+\alpha}}
\,d\xi d\mu(v) \le C\int_{\R^d} |v|^\alpha d\mu(v)
\end{equation}
and we may interchange the order of integrations to deduce
\begin{equation*}
\int_{\R^d} \frac{\,\Delta_\xi^k\left(\Re\,\phi\right)(0)}{|\xi|^{d+\alpha}}\,d\xi
=\frac 12 \int_{\R^d}\int_{\R^d}\frac{(e^{-i\xi\cdot v} -1)^k + (e^{i\xi\cdot v} -1)^k}
{|\xi|^{d+\alpha}} d\xi d\mu(v)
\end{equation*}
in analogy with the identity (\ref{Pr4}).

Proceeding exactly in the same manner as before, it is plain to derive the desired formula
(\ref{M13}) once we recall $\,S(k, k) = k!\,$ in the present case. \qed

\section{Fourier images of $\mathcal{P}_\alpha(\R^d)$}
Having expressed absolute moments in terms of Fourier transforms, we now deal with the problem of
describing the precise range of $\mathcal{P}_\alpha(\R^d)$ under the Fourier transform. Our main theorem reads as follows.

\medskip

\begin{theorem}\label{theoremM2} Let $k$ be a positive integer.
\begin{itemize}
\item[\rm{(i)}] For $\,0<\alpha<k,\,$ if $\,\mu\in\mathcal{P}_\alpha(\R^d)\,$ and $\,\phi =\widehat\mu,\,$
then $\phi$ is a continuous positive definite function on $\R^d$ satisfying $\,\phi(0) =1\,$ and
there exists a constant $\,C_1= C_1(k, \alpha, d)>0\,$ such that
\begin{equation}\label{M22}
\int_{\R^d}\frac{\,\left|\Delta_\xi^k(\phi)(0)\right|}{\,|\xi|^{d+\alpha}}\,d\xi\,
\le\,  C_1 \int_{\R^d}|v|^\alpha d\mu(v).
\end{equation}
\item[\rm{(ii)}] Assume that $\alpha$ is not an integer with $\,0<\alpha<k.$ If $\phi$ is a continuous positive definite function
on $\R^d$ satisfying $\,\phi(0) =1\,$ and
\begin{equation}\label{M23}
\int_{\R^d}\frac{\,\left|\Delta_\xi^k(\phi)(0)\right|}{\,|\xi|^{d+\alpha}}\,d\xi <\infty,
\end{equation}
then $\phi$ is the Fourier transform of some $\,\mu\in\mathcal{P}_\alpha(\R^d)\,$ and
there exists a constant $\,C_2= C_2(k, \alpha, d)>0\,$ such that
\begin{equation}\label{M24}
C_2\int_{\R^d}|v|^\alpha d\mu(v)\,\le\,\int_{\R^d}\frac{\,\left|\Delta_\xi^k(\phi)(0)\right|}{\,|\xi|^{d+\alpha}}\,d\xi.
\end{equation}
\item[\rm{(iii)}] Suppose that $k$ is odd. If we replace $\,\Delta_\xi^k(\phi)(0)\,$ by
$\,\Delta_\xi^k\left(\Re\,\phi\right)(0)\,$ in the above
statements, then part {\rm{(i)}} remains valid for $\,0<\alpha<k+1\,$ and part {\rm{(ii)}} for non-integral $\alpha$ with
$\,0<\alpha<k+1\,$ or $\,\alpha =k.$
\end{itemize}
\end{theorem}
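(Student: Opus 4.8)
The plan is to deduce everything from Theorem \ref{theoremM1} together with a Bochner-type argument. Part (i) is essentially already contained in the proof of Theorem \ref{theoremM1}: the estimate \eqref{Pr3} (or its odd-$k$ counterpart displayed just before \eqref{M13}) is exactly \eqref{M22}, with $C_1 = C$ the rotation-invariant constant $\int_{\R^d}|e^{-i\eta\cdot u}-1|^k|\eta|^{-d-\alpha}\,d\eta$ (resp. its symmetrized version), which is finite precisely in the stated ranges of $\alpha$. So for (i) I would simply recall that computation and note that the positive-definiteness and normalization $\phi(0)=1$ are immediate from $\phi=\widehat\mu$ and Bochner's theorem. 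Likewise, once (ii) produces a measure $\mu\in\p_\alpha(\R^d)$ with $\widehat\mu=\phi$, the lower bound \eqref{M24} is just Theorem \ref{theoremM1} read backwards: \eqref{M12} (or \eqref{M13}) gives the integral $\int_{\R^d}\Delta_\xi^k(\phi)(0)\,|\xi|^{-d-\alpha}\,d\xi$ as $A(k,\alpha,d)^{-1}\int|v|^\alpha\,d\mu$ with a \emph{signed} integrand, and passing to absolute values only increases the left side, so $C_2 = |A(k,\alpha,d)|$ works.

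The real content is the converse direction (ii): given a continuous positive definite $\phi$ with $\phi(0)=1$ and \eqref{M23}, show $\mu:=\mathcal{F}^{-1}\phi$ lies in $\p_\alpha(\R^d)$. By Bochner's theorem $\phi=\widehat\mu$ for some $\mu\in\p(\R^d)$ already; the issue is the finiteness of the $\alpha$-moment. I would approach this by a truncation/regularization argument. Mollify: set $\mu_\epsilon = \mu * G_\epsilon$ (convolution with the Gaussian of \eqref{}), so $\widehat{\mu_\epsilon}(\xi) = \phi(\xi)e^{-\epsilon|\xi|^2}$, which has all moments finite, hence Theorem \ref{theoremM1}(i) applies to $\mu_\epsilon$:
\begin{equation*}
\int_{\R^d}|v|^\alpha\,d\mu_\epsilon(v) = A(k,\alpha,d)\int_{\R^d}\frac{\Delta_\xi^k(\phi\,e^{-\epsilon|\cdot|^2})(0)}{|\xi|^{d+\alpha}}\,d\xi.
\end{equation*}
Now I would bound the right-hand side uniformly in $\epsilon$ by $|A(k,\alpha,d)|$ times the finite quantity in \eqref{M23}: this requires showing $|\Delta_\xi^k(\phi\,e^{-\epsilon|\cdot|^2})(0)| \le C\,|\Delta_\xi^k(\phi)(0)| + (\text{something integrable against }|\xi|^{-d-\alpha})$, uniformly in $\epsilon$, or more robustly going back to the representation $\Delta_\xi^k(\phi\,e^{-\epsilon|\cdot|^2})(0) = \int(e^{-i\xi\cdot v}-1)^k\,d\mu_\epsilon(v)$ and using that $|e^{-i\xi\cdot v}-1|^k$ is dominated in the same rotation-invariant way as in \eqref{Pr2}, combined with dominated convergence as $\epsilon\to 0$. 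Having a uniform bound, Fatou's lemma gives $\int|v|^\alpha\,d\mu \le \liminf_\epsilon \int|v|^\alpha\,d\mu_\epsilon < \infty$, since $\mu_\epsilon \to \mu$ weakly and $|v|^\alpha$ is nonnegative and lower semicontinuous.

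The main obstacle is the uniform-in-$\epsilon$ control of $\Delta_\xi^k(\phi\,e^{-\epsilon|\cdot|^2})(0)$ in terms of $\Delta_\xi^k(\phi)(0)$: the Gaussian factor does not commute with the difference operator, so one cannot just pull it out, and a naive Leibniz-type expansion of the iterated difference of a product generates cross terms involving $\Delta_\xi^j(\phi)(\cdot)$ at nonzero base points, which are not directly controlled by \eqref{M23}. I expect the clean way around this is \emph{not} to compare the two difference quantities pointwise but to dominate at the level of the measures: write $\int|v|^\alpha\,d\mu_\epsilon = \int\!\int |v|^\alpha\,dG_\epsilon(w)\,d\mu(v-w)$-type manipulations, or better, observe that for the \emph{inner} integral after Fubini one has $\int_{\R^d}(e^{-i\xi\cdot v}-1)^k|\xi|^{-d-\alpha}d\xi = c|v|^\alpha$ with the \emph{explicit negative} constant from \eqref{Pr8}, so that $A(k,\alpha,d)\int\Delta_\xi^k(\phi e^{-\epsilon|\cdot|^2})(0)|\xi|^{-d-\alpha}d\xi$ equals $\int|v|^\alpha d\mu_\epsilon$ identically — and then one only needs $\liminf_\epsilon \int \Delta_\xi^k(\phi e^{-\epsilon|\cdot|^2})(0)|\xi|^{-d-\alpha}d\xi \le \int \Delta_\xi^k(\phi)(0)|\xi|^{-d-\alpha}d\xi$ in absolute value, which follows from dominated convergence of the $\xi$-integrand (pointwise $\Delta_\xi^k(\phi e^{-\epsilon|\cdot|^2})(0)\to\Delta_\xi^k(\phi)(0)$ with the uniform majorant $\min(2^{k+1},k|\xi|^{k+1})$ coming from \eqref{I5-1}, valid since $|e^{-i\xi\cdot v}-1|\le\min(2,|\xi\cdot v|)$ survives multiplication by the sub-probability weight $e^{-\epsilon|v|^2}$). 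Part (iii) is handled identically, replacing $\Delta_\xi^k(\phi)(0)$ throughout by $\Delta_\xi^k(\Re\phi)(0) = \Re\Delta_\xi^k(\phi)(0)$ and invoking Theorem \ref{theoremM1}(ii) in place of (i), with the extended range $0<\alpha<k+1$ for odd $k$ coming from the better decay of $E_k$ in \eqref{I5-1}.
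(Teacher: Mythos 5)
Your treatment of part (i) and of the lower bound \eqref{M24} (once membership in $\p_\alpha(\R^d)$ is known) agrees with the paper: (i) is exactly the estimate \eqref{Pr3}, and \eqref{M24} follows from \eqref{M12} by taking absolute values, with $C_2=|A(k,\alpha,d)|^{-1}=|B(k,\alpha,d)|$ (you wrote $|A|$, a harmless bookkeeping slip). The genuine gap is in the heart of part (ii), the proof that \eqref{M23} forces $\mu\in\p_\alpha(\R^d)$. Your regularization $\mu_\epsilon=\mu\ast G_\epsilon$ does \emph{not} have all moments finite: convolving with a Gaussian smooths the density but leaves the tails of $\mu$ intact, so $\int|v|^\alpha d\mu_\epsilon<\infty$ if and only if $\int|v|^\alpha d\mu<\infty$. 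Applying Theorem \ref{theoremM1} to $\mu_\epsilon$ is therefore circular — the Fubini interchange it rests on requires precisely the moment bound you are trying to prove. Your fallback, dominated convergence in $\xi$ as $\epsilon\to0$, also fails: the majorant $\min(2^{k+1},k|\xi|^{k+1})$ from \eqref{I5-1} is a bound on $(e^{-ir}-1)^k+(e^{ir}-1)^k$ for a \emph{single} frequency $r$, whereas $\bigl|\Delta_\xi^k(\phi e^{-\epsilon|\cdot|^2})(0)\bigr|\le\int\min\bigl(2^k,|\xi\cdot v|^k\bigr)d\mu_\epsilon(v)$ is $O(|\xi|^k)$ near $\xi=0$ only if $\mu_\epsilon$ has a finite $k$-th moment. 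Without that, the only uniform-in-$\epsilon$ bound is the constant $2^k$, which is not integrable against $|\xi|^{-d-\alpha}$ at the origin.

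The paper regularizes in the opposite variable: it truncates the frequency integral to $\{|\xi|>\rho\}$, where the crude bound $2^k|\xi|^{-d-\alpha}\chi_{\{|\xi|>\rho\}}$ \emph{is} integrable, so Fubini is legitimate with no a priori moment information. This yields $K_\rho=\int_{\R^d}|v|^\alpha H_\rho(v)\,d\mu(v)$ with $H_\rho(v)\to1$ as $\rho\to0$ and $|K_\rho|$ bounded uniformly in $\rho$ by $|B(k,\alpha,d)|^{-1}$ times the quantity in \eqref{M23}; Fatou's lemma applied on the \emph{measure} side (not dominated convergence on the Fourier side) then gives $\int|v|^\alpha d\mu\le\liminf_{\rho\to0}K_\rho<\infty$. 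If you want to salvage your write-up, replace the Gaussian mollification by this cutoff of the $\xi$-domain; the rest of your outline (Bochner, Fatou, the symmetrized variant for part (iii)) then goes through as you describe.
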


\smallskip

\begin{proof}(i) In view of Bochner's theorem, it suffices to verify that $\phi$ satisfies (\ref{M22})
for which the estimate (\ref{Pr3}) gives
\begin{align*}
\int_{\R^d}\frac{\,\,\left|\Delta_\xi^k(\phi)(0)\right|\,}{\,|\xi|^{d+\alpha}}\,d\xi
&\le \int_{\R^d}\int_{\R^d} \frac{\,\left|e^{-i\xi\cdot v} -1 \right|^k}{|\xi|^{d+\alpha}}\,d\xi\,d\mu(v)\\
&\le C\int_{\R^d} |v|^\alpha d\mu(v)
\end{align*}
with a constant $\,C= C(k, \alpha, d)>0.$

(ii) Invoking Bochner's theorem again, we may identify $\,\phi = \widehat\mu\,$
for some probability measure $\mu$ on $\R^d$. To verify $\,\mu\in\p_\alpha(\R^d),\,$
we denote by $B(k, \alpha, d)$ the multiplicative constant of (\ref{Pr8})
so that
\begin{equation*}
\int_{\R^d}\frac{\,\left(e^{-i\xi\cdot v} -1 \right)^k}{|\xi|^{d+\alpha}}\,d\xi
= B(k, \alpha, d) \,|v|^\alpha.
\end{equation*}
Note that $\,B(k, \alpha, d)\ne 0\,$ for non-integral $\alpha$ with $\,0<\alpha<k.$

For $\,\rho>0,\,$ let us consider
\begin{equation}\label{M26}
K_\rho = \frac{1}{B(k, \alpha, d) } \int_{\R^d} \int_{\{|\xi|>\rho\}} \frac{\,\left(e^{-i\xi\cdot v} -1 \right)^k}{|\xi|^{d+\alpha}}\,d\xi\,d\mu(v).
\end{equation}
Denoting by $\chi_E$ the indicator function of $\,E\subset\R^d,\,$ we observe
\begin{equation}\label{M27}
\left|\frac{\,\left(e^{-i\xi\cdot v} -1 \right)^k}{|\xi|^{d+\alpha}}\right|\,\chi_{\{|\xi|>\rho\}} \le 2^k |\xi|^{-d-\alpha} \,\chi_{\{|\xi|>\rho\}}
\end{equation}
uniformly in $\,v\in\R^d.\,$ Since for $\,d\ge 2,\,$
\begin{equation*}
\int_{\R^d} \int_{\{|\xi|>\rho\}} |\xi|^{-d-\alpha}\,d\xi\,d\mu(v) = \left|\s^{d-1}\right|\rho^{-\alpha} <\infty
\end{equation*}
and the same holds for $\,d=1\,$ with $\left|\s^{d-1}\right|$ replaced by $2$, we may interchange the order
of integrations, justified by Fubini's theorem, to find
\begin{align}\label{M210}
K_\rho &= \frac{1}{B(k, \alpha, d) } \int_{\{|\xi|>\rho\}} \frac{1}{|\xi|^{d+\alpha}}\left[ \int_{\R^d}
\left(e^{-i\xi\cdot v} -1 \right)^k\,d\mu(v)\right] d\xi\nonumber\\
&= \frac{1}{B(k, \alpha, d) } \int_{\{|\xi|>\rho\}}\frac{\,\Delta_\xi^k(\phi)(0)}{|\xi|^{d+\alpha}}\,d\xi,
\end{align}
which implies
\begin{equation}\label{M211}
\left| K_\rho\right| \le \frac{1}{\left|B(k, \alpha, d)\right|} \int_{\R^d} \frac{\,\left|\Delta_\xi^k(\phi)(0)\right|}{|\xi|^{d+\alpha}}\,d\xi
\end{equation}
uniformly in $\,\rho>0.\,$

By changing variables and using the rotation invariance, we may write
\begin{align}
K_\rho &= \int_{\R^d} |v|^\alpha H_\rho(v)\,d\mu(v)\quad\text{with}\nonumber\\
H_\rho(v) &= \chi_{\{v\ne 0\}}\cdot\frac{1}{B(k, \alpha, d) } \int_{\{|\eta|>\rho |v|\}}
\frac{\,\left(e^{-i\eta\cdot u} -1 \right)^k}{|\eta|^{d+\alpha}}\,d\eta,
\end{align}
where $u$ is a fixed unit vector in $\R^d$. As readily verified, $H_\rho(v)$ is real-valued and continuous in $\rho$.
Since $\,H_\rho(v)\to 1\,$ as $\,\rho\to 0,\,$ $\,H_\rho(v)>0\,$ for all sufficiently small $\,\rho>0.$
Therefore we may apply Fatou's lemma to deduce
\begin{align}\label{M223}
\int_{\R^d} |v|^\alpha d\mu(v) &\le \liminf_{\rho\to 0}\int_{\R^d} |v|^\alpha H_\rho(v)\,d\mu(v)\nonumber\\
&\le \frac{1}{\left|B(k, \alpha, d)\right|} \int_{\R^d} \frac{\,\left|\Delta_\xi^k(\phi)(0)\right|}{|\xi|^{d+\alpha}}\,d\xi,
\end{align}
where the latter results from the uniform boundedness of (\ref{M211}).
This proves $\,\mu\in\mathcal{P}_\alpha(\R^d)\,$ and (\ref{M24}) with the
lower bound $\,C_2 = \left|B(k, \alpha, d) \right|.\,$

(iii) As in the proof of Theorem \ref{theoremM1}, since the proof follows with minor modifications
of the proofs of (i) and (ii), we shall omit the details.

Our proof for Theorem \ref{theoremM2} is now complete.
\end{proof}

\medskip

We recall $\Phi(\R^d)$ denotes the collection of all continuous positive definite functions $\phi$ on $\R^d$
with $\,\phi(0) =1.\,$ For $\,\alpha>0\,$ and an integer $k$, let
\begin{align}
\Phi^{\,\alpha}_{k}(\R^d) &= \left\{\phi\in\Phi(\R^d)\,:\,
\int_{\R^d}\frac{\,\left|\Delta_\xi^k(\phi)(0)\right|}{\,|\xi|^{d+\alpha}}\,d\xi <\infty\right\},\label{FI1}\\
\Omega^{\,\alpha}_{k}(\R^d) &= \left\{\phi\in\Phi(\R^d)\,:\,
\int_{\R^d}\frac{\,\left|\Delta_\xi^k\left(\Re\,\phi\right)(0)\right|}{\,|\xi|^{d+\alpha}}\,d\xi <\infty\right\}.\label{FI1-1}
\end{align}

In a functional setting, Theorem \ref{theoremM2} yields the following:

\medskip

\begin{corollary}\label{corollaryFI2} {\rm{(Theorem \ref{theorem2})}}
Suppose $\,\alpha>0\,$ and it is not an even integer.
\begin{itemize}
\item[{\rm(i)}] For any integer $k$, if $\alpha$ is non-integral with $\,0<\alpha<k,\,$ then
the Fourier transform is a bijection from $\mathcal{P}_\alpha(\R^d)$ onto
$\Phi^{\,\alpha}_{k}(\R^d)$.
\item[{\rm(ii)}] For an odd integer $k$, if $\alpha$ is non-integral with $\,0<\alpha<k+1\,$
or $\,\alpha =k,\,$ then the Fourier transform is a bijection from $\mathcal{P}_\alpha(\R^d)$ onto
$\Omega^{\,\alpha}_{k}(\R^d)$.
\end{itemize}
\end{corollary}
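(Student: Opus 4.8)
The plan is to deduce Corollary \ref{corollaryFI2} directly from Theorem \ref{theoremM2} together with Bochner's theorem, so that almost nothing new is required beyond assembling pieces already in hand. Recall that Bochner's theorem, combined with the uniqueness property of the Fourier transform, asserts that $\mathcal{F}$ is a bijection from $\mathcal{P}(\R^d)$ onto $\Phi(\R^d)$. Hence to prove (i) it suffices to show that, for non-integral $\alpha$ with $0<\alpha<k$, the restriction of $\mathcal{F}$ maps $\mathcal{P}_\alpha(\R^d)$ exactly onto $\Phi^{\,\alpha}_k(\R^d)$; injectivity is then automatic, so the whole content is the set equality $\mathcal{F}\bigl(\mathcal{P}_\alpha(\R^d)\bigr)=\Phi^{\,\alpha}_k(\R^d)$.

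First I would prove the inclusion $\mathcal{F}\bigl(\mathcal{P}_\alpha(\R^d)\bigr)\subset\Phi^{\,\alpha}_k(\R^d)$. Let $\mu\in\mathcal{P}_\alpha(\R^d)$ and put $\phi=\widehat\mu$. By Theorem \ref{theoremM2}(i), $\phi$ is a continuous positive definite function with $\phi(0)=1$, i.e. $\phi\in\Phi(\R^d)$, and the estimate (\ref{M22}) gives
\[
\int_{\R^d}\frac{\bigl|\Delta_\xi^k(\phi)(0)\bigr|}{|\xi|^{d+\alpha}}\,d\xi\le C_1\int_{\R^d}|v|^\alpha\,d\mu(v)<\infty,
\]
so $\phi\in\Phi^{\,\alpha}_k(\R^d)$. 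For the reverse inclusion $\Phi^{\,\alpha}_k(\R^d)\subset\mathcal{F}\bigl(\mathcal{P}_\alpha(\R^d)\bigr)$, take $\phi\in\Phi^{\,\alpha}_k(\R^d)$. Since $\phi\in\Phi(\R^d)$, Bochner's theorem yields a unique $\mu\in\mathcal{P}(\R^d)$ with $\widehat\mu=\phi$; and since $\phi$ satisfies the finiteness condition (\ref{M23}), Theorem \ref{theoremM2}(ii) tells us $\mu\in\mathcal{P}_\alpha(\R^d)$. Thus $\phi=\mathcal{F}(\mu)$ with $\mu\in\mathcal{P}_\alpha(\R^d)$, completing the equality and hence part (i). Part (ii) is identical in structure: one replaces $\Delta_\xi^k(\phi)(0)$ by $\Delta_\xi^k(\Re\,\phi)(0)$, uses Theorem \ref{theoremM2}(iii) in place of parts (i) and (ii) to get both the forward estimate (valid for $0<\alpha<k+1$) and the membership conclusion (valid for non-integral $\alpha<k+1$ or $\alpha=k$), and reads off $\mathcal{F}\bigl(\mathcal{P}_\alpha(\R^d)\bigr)=\Omega^{\,\alpha}_k(\R^d)$.

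Essentially no step here is a genuine obstacle, since Theorem \ref{theoremM2} has already been proved and carries all the analytic weight; the only thing to be careful about is bookkeeping of the index ranges. In part (i) the hypothesis that $\alpha$ is \emph{non-integral} is needed not merely for the forward direction but crucially for the converse, because the lower bound $C_2=|B(k,\alpha,d)|$ in (\ref{M24}) vanishes precisely when $\alpha$ is an integer $0<\alpha<k$ (the sum $S(k,\alpha)$ is zero there, by Lemma \ref{lemmaS}(i)), so Theorem \ref{theoremM2}(ii) is only asserted for non-integral $\alpha$ and the bijectivity claim must be stated with that restriction. In part (ii) the odd-$k$ hypothesis and the wider range $0<\alpha<k+1$ reflect the fact that $E_k(r)$ vanishes to order $k+1$ at the origin when $k$ is odd (cf. (\ref{I5})), and the borderline value $\alpha=k$ is admissible because $S(k,k)=k!\neq 0$. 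I would close the proof simply by remarking that combining (i) and (ii) with the range overlap noted in Theorem \ref{theoremM2}(iii) gives the two displayed descriptions (\ref{1.1}) and (\ref{1.2}) of Theorem \ref{theorem2}, and that for a fixed $\alpha$ any admissible choice of $k$ produces the same set, since each equals $\mathcal{F}\bigl(\mathcal{P}_\alpha(\R^d)\bigr)$.
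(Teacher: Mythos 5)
Your proposal is correct and follows exactly the route the paper intends: the corollary is stated as an immediate consequence of Theorem \ref{theoremM2}, and your assembly of the two inclusions from parts (i) and (ii) (resp.\ (iii)) together with Bochner's theorem for injectivity is precisely that deduction. Your side remarks on why non-integrality of $\alpha$ is essential (degeneration of the constant $B(k,\alpha,d)$ via $S(k,\alpha)=0$ at integers) accurately reflect the bookkeeping underlying the stated ranges.
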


A probability measure $\mu$ on $\R^d$ is said to be symmetric about the origin if
it is invariant under $\,v\mapsto -v\,$ on $\R^d$ in the sense that
\begin{equation}
\int_{\R^d} f(-v)\, d\mu(v) = \int_{\R^d} f(v)\, d\mu(v)
\end{equation}
for every bounded Borel measurable function $f$ on $\R^d$. For example, if $\mu$ is a radially symmetric
Lebesgue measure, say, $\,d\mu(v) = F(|v|) dv\,$ for some nonnegative function $F$ with unit mass on $[0, \infty)$,
then it is symmetric about the origin. In general, it can be shown
that $\mu$ is symmetric about the origin if and only if $\widehat\mu$ is real-valued
(see e.g. the book of V. Bogachev \cite{Bog}, p. 200).

With the notion of symmetry about the origin, we consider the following subclasses
of $\,\mathcal{P}_\alpha(\R^d), \,\Phi^{\,\alpha}_k(\R^d)\,$ for each $\,\alpha>0\,$ and an integer $k$.
\begin{align}\label{SO}
\mathcal{O}_{\alpha}(\R^d) &=\biggl\{ \mu\in\mathcal{P}_\alpha(\R^d)\, :
\,\mu\,\,\,\text{is symmetric about the origin}\,\,\biggr\},\nonumber\\
\widetilde{\Phi^{\,\alpha}_{k}}(\R^d) &=\biggl\{ \phi\in\Phi^{\alpha}_{k}(\R^d)\, :
\,\phi\,\,\,\text{is real-valued}\,\,\biggr\}.
\end{align}

An application of Corollary \ref{corollaryFI2} yields

\medskip

\begin{corollary}\label{propositionFI} For an integer $k$, suppose
either (i) $\alpha$ is non-integral with $\,0<\alpha<k+1\,$ or $\,\alpha=k\,$ when $k$ is odd
or (ii) $\alpha$ is non-integral with $\,0<\alpha<k\,$ when $k$ is even.
Then the Fourier transform is a bijection from $\mathcal{O}_{\alpha}(\R^d)$ onto $\widetilde{\Phi^{\,\alpha}_{k}}(\R^d).$
\end{corollary}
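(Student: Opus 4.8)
The plan is to deduce Corollary \ref{propositionFI} by combining Corollary \ref{corollaryFI2} with the characterization of symmetry about the origin in terms of real-valuedness of the characteristic function. First I would observe that under the stated hypotheses on $\alpha$ and $k$, Corollary \ref{corollaryFI2} already asserts that the Fourier transform is a bijection from $\mathcal{P}_\alpha(\R^d)$ onto the appropriate ambient image space --- namely $\Phi^{\,\alpha}_k(\R^d)$ in case (ii) where $k$ is even and $\alpha$ is non-integral with $0<\alpha<k$, and $\Omega^{\,\alpha}_k(\R^d)$ in case (i) where $k$ is odd. Since the Fourier transform is injective on all of $\mathcal{P}(\R^d)$ by Bochner's theorem together with uniqueness, the restriction to the subclass $\mathcal{O}_\alpha(\R^d)$ is automatically injective; the content is entirely about the image.

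Next I would carry out the image identification. The key fact, recalled in the excerpt (citing Bogachev, p.~200), is that $\mu\in\mathcal{P}(\R^d)$ is symmetric about the origin if and only if $\widehat\mu$ is real-valued. Therefore $\mathcal{F}\bigl(\mathcal{O}_\alpha(\R^d)\bigr)$ consists exactly of those $\phi$ in the relevant image space that are real-valued. In case (ii), that image space is $\Phi^{\,\alpha}_k(\R^d)$, so $\mathcal{F}\bigl(\mathcal{O}_\alpha(\R^d)\bigr) = \widetilde{\Phi^{\,\alpha}_k}(\R^d)$ by the very definition (\ref{SO}). In case (i), the image space is $\Omega^{\,\alpha}_k(\R^d)$, so I must argue that a real-valued $\phi\in\Phi(\R^d)$ lies in $\Omega^{\,\alpha}_k(\R^d)$ if and only if it lies in $\Phi^{\,\alpha}_k(\R^d)$: but for real-valued $\phi$ one has $\Re\,\phi=\phi$, hence $\Delta_\xi^k(\Re\,\phi)(0)=\Delta_\xi^k(\phi)(0)$ and the two defining integrals are literally the same. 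Thus $\widetilde{\Omega^{\,\alpha}_k}(\R^d)=\widetilde{\Phi^{\,\alpha}_k}(\R^d)$, and again $\mathcal{F}\bigl(\mathcal{O}_\alpha(\R^d)\bigr)=\widetilde{\Phi^{\,\alpha}_k}(\R^d)$.

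Conversely, to see surjectivity onto $\widetilde{\Phi^{\,\alpha}_k}(\R^d)$, I would take any real-valued $\phi$ in that set; by Corollary \ref{corollaryFI2} it is $\widehat\mu$ for a unique $\mu\in\mathcal{P}_\alpha(\R^d)$, and real-valuedness of $\widehat\mu$ forces $\mu$ to be symmetric about the origin, i.e.\ $\mu\in\mathcal{O}_\alpha(\R^d)$. Combining both directions gives the claimed bijection. I do not anticipate a genuine obstacle here: the argument is a routine bookkeeping exercise layering the symmetry criterion on top of an already-established bijection. The one point requiring a moment's care is the case split --- making sure that in the odd-$k$ case one correctly matches the $\Re\,\phi$-based image $\Omega^{\,\alpha}_k(\R^d)$ against the $\phi$-based set $\widetilde{\Phi^{\,\alpha}_k}(\R^d)$ by exploiting $\Re\,\phi=\phi$ on real-valued functions --- and verifying that the hypotheses listed in Corollary \ref{propositionFI} are exactly the union of the hypotheses under which the two parts of Corollary \ref{corollaryFI2} apply, so that the appropriate ambient bijection is available in each subcase.
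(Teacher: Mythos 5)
Your proposal is correct and follows exactly the route the paper intends (the paper states the corollary as "an application of Corollary \ref{corollaryFI2}" without writing out details): combine the ambient bijection of Corollary \ref{corollaryFI2} with the criterion that $\mu$ is symmetric about the origin if and only if $\widehat\mu$ is real-valued, and note that $\Delta_\xi^k(\Re\,\phi)(0)=\Delta_\xi^k(\phi)(0)$ for real-valued $\phi$ so the two image spaces have the same real-valued elements. Your handling of the case split and of the range $k\le\alpha<k+1$ in the odd case is exactly the bookkeeping the paper leaves implicit.
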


\section{Characterization by derivatives}
As the difference operator arises as a means of approximating derivatives, it may be expected that the Fourier image spaces
could be characterized by other functionals which involve derivatives. In fact,
if we make use of the following version of mean value theorem for the difference operator
and its iterates, then it is not hard to establish such a characterization.

\medskip

\begin{lemma} \label{lemmaP1}
\rm{(\cite{Cho2}, Lemma 3.1)}
For $\,\eta,\, \xi\in\R^d,\,$ if $\,\phi\in C^k(\R^d),\,$ then
\begin{align}
\frac{\Delta^k_\xi (\phi)(\eta)}{k!}
= \sum_{|\sigma|=k}\frac{\xi^\sigma}{\sigma !}
\int_{Q_k}\bigl(\partial^\sigma\phi\bigr)\bigl[\eta + (\theta_1 + \cdots +
\theta_k)\xi\bigr] d\theta_1\cdots d\theta_k,
\end{align}
where $\,Q_k = [0, 1]^k,\,$ the $k$-dimensional unit cube.
\end{lemma}

\medskip
According to Corollary \ref{corollaryFI2}, if $\alpha$ is positive but not an integer, then the Fourier transform is a bijection from
$\mathcal{P}_\alpha(\R^d)$ onto $\Phi^{\,\alpha}_{1+[\alpha]}(\R^d)$. In the case $\,\alpha>1,\,$ this image space
can be characterized as follows.

\medskip

\begin{proposition}\label{propositionCD}
Suppose that $\,\alpha = m + \gamma,\,$ where $m$ is a positive integer and $\,0<\gamma<1.\,$ A necessary and sufficient
condition for $\,\phi\in\Phi^{\,\alpha}_{m+1}(\R^d)\,$ is that $\,\phi\in \Phi(\R^d)\cap C_b^m(\R^d)\,$ and
\begin{equation}\label{P1}
\int_{\R^d} \frac{\,\,\left|\Delta_\xi\left(\partial^\sigma\phi\right)(0)\right|\,\,}{|\xi|^{d+\gamma}}\,d\xi<\infty
\quad\text{for all}\quad |\sigma|=m.
\end{equation}
Moreover, the following equivalence holds:
\begin{equation}\label{P2}
\int_{\R^d}\frac{\,\left|\Delta^{m+1}_\xi(\phi)(0)\right|\,}{|\xi|^{d+\alpha}}\,d\xi
\,\approx\,  \sum_{|\sigma|=m}\int_{\R^d} \frac{\,\,\left| \Delta_\xi\left(\partial^\sigma\phi\right)(0)\right|\,\,}
{|\xi|^{d+\gamma}}\,d\xi.
\end{equation}
\end{proposition}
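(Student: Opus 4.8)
The plan is to prove the two directions separately, using Lemma \ref{lemmaP1} as the bridge between the $(m+1)$-st difference and first differences of $m$-th order derivatives.

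First I would establish the easier implication: suppose $\phi\in\Phi(\R^d)\cap C^m_b(\R^d)$ satisfies \eqref{P1}. Applying Lemma \ref{lemmaP1} with the exponent $k=m+1$ is awkward because it produces $\partial^\sigma\phi$ with $|\sigma|=m+1$, which need not exist; instead I would apply the lemma with $k=m$ to the function $\Delta_\xi(\phi)$, or more cleanly write $\Delta^{m+1}_\xi(\phi)(0)=\Delta^m_\xi\bigl(\Delta_\xi\phi\bigr)(0)$ and then use Lemma \ref{lemmaP1} on the outer $\Delta^m_\xi$. This gives
\begin{equation*}
\frac{\Delta^{m+1}_\xi(\phi)(0)}{m!}=\sum_{|\sigma|=m}\frac{\xi^\sigma}{\sigma!}\int_{Q_m}\bigl(\partial^\sigma\Delta_\xi\phi\bigr)\bigl[(\theta_1+\cdots+\theta_m)\xi\bigr]\,d\theta_1\cdots d\theta_m,
\end{equation*}
and since $\partial^\sigma$ commutes with $\Delta_\xi$, the integrand is $\Delta_\xi(\partial^\sigma\phi)\bigl[(\theta_1+\cdots+\theta_m)\xi\bigr]$. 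Bounding $|\xi^\sigma|\le|\xi|^m$, dividing by $|\xi|^{d+\alpha}=|\xi|^{d+m+\gamma}$, and integrating, I would reduce to controlling $\int_{\R^d}|\xi|^{-d-\gamma}\bigl|\Delta_\xi(\partial^\sigma\phi)(t\xi)\bigr|\,d\xi$ for $t\in[0,m]$. The translation by $t\xi$ inside the difference is the one technical nuisance: I would handle it by the identity $\Delta_\xi(g)(t\xi)=\Delta_\xi(g)(0)+[g((t+1)\xi)-g(\xi)]-[g(t\xi)-g(0)]$, or more efficiently by noting $\Delta_\xi(g)(t\xi)$ is itself expressible via a first difference and writing $g(s\xi)-g(0)$ telescopically; in any case each resulting term is of the form $\int|\xi|^{-d-\gamma}|g(c\xi)-g(0)|\,d\xi$ with $c$ a bounded constant, which is $\lesssim\int|\xi|^{-d-\gamma}|\Delta_\xi(g)(0)|\,d\xi$ after the substitution $\xi\mapsto\xi/c$. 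Summing over $|\sigma|=m$ yields one direction of \eqref{P2} and shows $\phi\in\Phi^{\,\alpha}_{m+1}(\R^d)$.

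For the converse — the harder direction — I would start from $\phi\in\Phi^{\,\alpha}_{m+1}(\R^d)$. By Corollary \ref{corollaryFI2}(i) (valid since $\alpha=m+\gamma$ is non-integral and $0<\alpha<m+1$), $\phi=\widehat\mu$ for some $\mu\in\mathcal{P}_\alpha(\R^d)$; since $\alpha>m$, $\mu$ has finite moments up to order $m$, so $\phi\in C^m_b(\R^d)$ with $\partial^\sigma\phi(\xi)=\int_{\R^d}(-iv)^\sigma e^{-i\xi\cdot v}\,d\mu(v)$ and hence $\partial^\sigma\phi(\xi)-\partial^\sigma\phi(0)=\int_{\R^d}(-iv)^\sigma(e^{-i\xi\cdot v}-1)\,d\mu(v)$. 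Then for each $|\sigma|=m$,
\begin{equation*}
\int_{\R^d}\frac{\bigl|\Delta_\xi(\partial^\sigma\phi)(0)\bigr|}{|\xi|^{d+\gamma}}\,d\xi\le\int_{\R^d}|v|^m\!\left(\int_{\R^d}\frac{|e^{-i\xi\cdot v}-1|}{|\xi|^{d+\gamma}}\,d\xi\right)d\mu(v)=C\!\int_{\R^d}|v|^{m+\gamma}\,d\mu(v),
\end{equation*}
using $|v^\sigma|\le|v|^m$, Tonelli, and the scaling computation $\int|\xi|^{-d-\gamma}|e^{-i\xi\cdot v}-1|\,d\xi=C(\gamma,d)|v|^\gamma$ from the proof of Theorem \ref{theoremM1}(i). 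The right side is finite and, by Theorem \ref{theoremM2}(ii) applied with $k=m+1$, is bounded by a constant times $\int_{\R^d}|\xi|^{-d-\alpha}|\Delta^{m+1}_\xi(\phi)(0)|\,d\xi$. This simultaneously verifies \eqref{P1} and the remaining direction of the equivalence \eqref{P2}.

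The main obstacle is the bookkeeping in the first (sufficiency) direction: the shifted-argument difference $\Delta_\xi(\partial^\sigma\phi)(t\xi)$ must be reduced to the unshifted $\Delta_\xi(\partial^\sigma\phi)(0)$ uniformly in the integration parameters $\theta_1,\dots,\theta_m$, and one must check that the constants produced by the change of variables $\xi\mapsto\xi/c$ stay under control as $c$ ranges over a bounded set bounded away from $0$. Everything else reduces to the scaling identity for $\int|\xi|^{-d-\gamma}|e^{-i\xi\cdot v}-1|\,d\xi$ and the two-sided moment estimates already furnished by Theorem \ref{theoremM2}, so the proposition follows by combining those with Lemma \ref{lemmaP1}.
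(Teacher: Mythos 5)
Your proposal is correct and follows essentially the same route as the paper: the necessity direction goes through Corollary \ref{corollaryFI2} and the scaling identity $\int|\xi|^{-d-\gamma}|e^{-i\xi\cdot v}-1|\,d\xi=C|v|^{\gamma}$, and the sufficiency direction applies Lemma \ref{lemmaP1} to the $m$-th difference (the paper writes $\Delta_\xi^{m+1}(\phi)(0)=\Delta_\xi^m(\phi)(\xi)-\Delta_\xi^m(\phi)(0)$, which is the same device as your $\Delta_\xi^m(\Delta_\xi\phi)(0)$), then splits the shifted difference into two terms anchored at the origin and rescales. Your worry about the rescaling constant near $c=0$ is unfounded in a harmless way: the substitution $\xi\mapsto\xi/c$ produces the factor $c^{\gamma}$, which is bounded by $(m+1)^{\gamma}$ over the whole range and vanishes rather than blows up as $c\to0$, exactly as in the paper's bound $\int_{Q_m}[(1+\overline\theta)^{\gamma}+\overline\theta^{\,\gamma}]\,d\theta\le 2m+1$.
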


\begin{proof}
For necessity, suppose $\,\phi\in\Phi^{\,\alpha}_{m+1}(\R^d).\,$ By Corollary \ref{corollaryFI2}, there exists a unique
probability measure $\,\mu\in\mathcal{P}_\alpha(\R^d)\,$ such that $\,\phi=\widehat\mu\,$ and
\begin{equation}\label{P3}
\int_{\R^d}|v|^\alpha\,d\mu(v)\,\approx\,\int_{\R^d}\frac{\,\left|\Delta^{m+1}_\xi(\phi)(0)\right|\,}{|\xi|^{d+\alpha}}\,d\xi.
\end{equation}
In view of the monotone property of absolute moments,
we may differentiate under the integral sign repeatedly to deduce that $\phi$ possesses continuous partial derivatives of order up to $m$ with
\begin{equation}\label{P4}
\left(\partial^\sigma\phi\right)(\xi) = \int_{\R^d} e^{-i \xi\cdot v} (-iv)^\sigma\,d\mu(v)\quad\text{for all}\quad |\sigma|\le m
\end{equation}
and hence $\,\phi\in C_b^m(\R^d)\,$ for which the boundedness of derivatives are obvious.
For each $\sigma$ with $\,|\sigma|=m,\,$ (\ref{P3}) and (\ref{P4}) give
\begin{align}\label{P5}
\int_{\R^d}\frac{\,\left|\Delta_\xi\left(\partial^\sigma\phi\right)(0)\right|\,}{|\xi|^{d+\gamma}}\,d\xi
&\le \int_{\R^d}|v|^m\left(\int_{\R^d} \frac{\,\left|e^{-i \xi\cdot v} -1\right|\,}{|\xi|^{d+\gamma}}\,d\xi\right) d\mu(v)\nonumber\\
&\le C\,\int_{\R^d} |v|^{m+\gamma}\,d\mu(v)\nonumber \\
&\le C\,\int_{\R^d}\frac{\,\left|\Delta^{m+1}_\xi(\phi)(0)\right|\,}{|\xi|^{d+\alpha}}\,d\xi,
\end{align}
where $C$ denotes a positive constant depending only on $\,\alpha, d\,$ which may be different in each occurrence.
This proves the condition (\ref{P1}).

For sufficiency, assume that $\,\phi\in \Phi(\R^d)\cap C_b^m(\R^d)\,$ and
each $\partial^\sigma\phi$ of order $\,|\sigma|=m\,$ satisfies the condition (\ref{P1}).
An application of Lemma \ref{lemmaP1} gives
\begin{align*}
\Delta_\xi^{m+1}(\phi)(0) &= \Delta_\xi^m(\phi)(\xi) - \Delta_\xi^m(\phi)(0)\\
&= m!\sum_{|\sigma|=m} \frac{\xi^\sigma}{\sigma!}\,
\int_{Q_m}\left[\left(\partial^\sigma\phi\right)\left(\left(1+ \overline{\theta}\right)\xi\right) -
\left(\partial^\sigma\phi\right)\left(\overline{\theta}\xi\right)\right] d\theta\,,
\end{align*}
where we put $\,\overline{\theta} = \theta_1 + \cdots +\theta_m\,$ and $\,d\theta = d\theta_1\cdots d\theta_m\,$ for simplicity.
By the elementary inequality $\,|\xi^\sigma|\le |\xi|^{|\sigma|},\,$ this identity implies
$$\int_{\R^d}\frac{\,\left|\Delta^{m+1}_\xi(\phi)(0)\right|\,}{|\xi|^{d+\alpha}}\,d\xi$$
is bounded above by
\begin{align}\label{P6}
&\,\sum_{|\sigma|=m}\frac{m!}{\sigma!}\,\biggl\{\int_{Q_m}
\int_{\R^d} \frac{\,\,\left| \left(\partial^\sigma\phi\right)\left(\left(1+ \overline{\theta}\right)\xi\right) - \left(\partial^\sigma\phi\right)(0)\right|\,\,}{|\xi|^{d+\gamma}}\,d\xi d\theta\nonumber\\
&\qquad\qquad\quad\quad + \int_{Q_m}
\int_{\R^d} \frac{\,\,\left| \left(\partial^\sigma\phi\right)\left(\overline{\theta}\xi\right) - \left(\partial^\sigma\phi\right)(0)\right|\,\,}{|\xi|^{d+\gamma}}\,d\xi d\theta\biggr\}\nonumber\\
& = \sum_{|\sigma|=m}\frac{m!}{\sigma!}
\,\int_{Q_m}\left[ \left(1+\overline{\theta}\right)^\gamma +\overline\theta^{\,\gamma}\right] d\theta\cdot
\int_{\R^d} \frac{\,\,\left| \Delta_\xi\left(\partial^\sigma\phi\right)(0)\right|\,\,}{|\xi|^{d+\gamma}}\,d\xi
\nonumber\\
&\le (2m+1) \sum_{|\sigma|=m}\frac{m!}{\sigma!}\,
\int_{\R^d} \frac{\,\,\left| \Delta_\xi\left(\partial^\sigma\phi\right)(0)\right|\,\,}{|\xi|^{d+\gamma}}\,d\xi.
\end{align}
Thus we conclude $\,\phi\in\Phi^{\,\alpha}_{m+1}(\R^d)\,$ and
\begin{equation}\label{P7}
\int_{\R^d}\frac{\,\left|\Delta^{m+1}_\xi(\phi)(0)\right|\,}{|\xi|^{d+\alpha}}\,d\xi
\le\,(2m+1)d^m\, \sum_{|\sigma|=m}\int_{\R^d} \frac{\,\,\left| \Delta_\xi\left(\partial^\sigma\phi\right)(0)\right|\,\,}{|\xi|^{d+\gamma}}\,d\xi,
\end{equation}
which follows from (\ref{P6}) in view of $\,|\sigma|!\le d^{|\sigma|}\sigma!\,$
for each $\,\sigma\in\mathbb{Z}_+^d.\,$

Finally, (\ref{P5}) and (\ref{P7}) give the equivalence (\ref{P2}).
\end{proof}

\medskip

\begin{remark}
An immediate consequence of Proposition \ref{propositionCD} is that
\begin{equation}
\left[\Phi(\R^d)\cap C^{1+[\alpha]}_b(\R^d)\right]\,\subset\,\mathcal{F}\left(\mathcal{P}_\alpha(\R^d)\right)
\end{equation}
for every non-integral $\,\alpha>0,\,$ which can be seen by applying the usual mean value theorem in the
functional defined on the right side of (\ref{P2}).
\end{remark}

\section{Moment-related metrics}
In this section we investigate the problem of constructing probability metrics on $\mathcal{P}_\alpha(\R^d)$
which make the space complete.

As the Fourier transform is a bijection from $\mathcal{P}_\alpha(\R^d)$ onto
$\Phi^{\,\alpha}_{k}(\R^d)$ for each non-integral $\,0<\alpha<k\,$ with $k$ an integer,
any complete metric on $\Phi^{\,\alpha}_{k}(\R^d)$
will be transferred to the one on the space $\mathcal{P}_\alpha(\R^d)$ in an obvious way.

For $\,\alpha>0\,$ and an integer $k$, let us introduce
\begin{equation}\label{me1}
\|\phi\|_{\alpha,\, k} = \int_{\R^d}\frac{\left|\Delta^k_\xi(\phi)(0)\right|}{|\xi|^{d+\alpha}}\,d\xi.
\end{equation}
As the $k$th difference operator $\Delta^k_\xi$ annihilates any polynomial in $\xi$ of degree less than $k$,
the map $\,(\phi, \psi)\mapsto \|\phi-\psi\|_{\alpha, \,k}\,$ given by
\begin{equation*}\label{me2}
\|\phi-\psi\|_{\alpha,\, k} = \int_{\R^d}\biggl|\sum_{m=1}^k\binom km (-1)^{k-m}
\bigl[\phi(m\xi)-\psi(m\xi)\bigr]\biggr|\,\frac{d\xi}{\,|\xi|^{d+\alpha}}
\end{equation*}
defines only a pseudo-metric on the space
$$\Phi^{\,\alpha}_{k}(\R^d) =
\bigl\{\,\phi\in\Phi(\R^d)\,: \,\|\phi\|_{\alpha, \,k} <\infty\,\bigr\}.$$

\medskip

\begin{lemma}\label{lemmaME1} For $\,\alpha>0\,$ and an integer $k$, suppose $(\phi_n)$ is a sequence
of complex-valued functions on $\R^d$ such that
(i) $\,\|\phi_n\|_{\alpha,\, k}<\infty\,$ for each $n$, (ii) it converges pointwise to $\phi$ and
(iii) $\,\|\phi_n -\phi_m\|_{\alpha, \,k}\,\to\,0\,$ as $\,n, m\to\infty.\,$
Then $\,\|\phi\|_{\alpha, \,k}<\infty\,$ and
$\,\|\phi_n -\phi\|_{\alpha, \,k}\,\to\,0\,$ as $\,n\to\infty.\,$
\end{lemma}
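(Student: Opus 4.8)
The plan is to reduce everything to a standard Fatou/completeness argument applied to the function
$$\xi\longmapsto \frac{\Delta^k_\xi(\phi)(0)}{|\xi|^{d+\alpha}}$$
regarded as an element of $L^1(\R^d)$. The key observation is that for each fixed $\xi$, hypothesis~(ii) gives
$$\Delta^k_\xi(\phi_n)(0) = \sum_{m=0}^k\binom km(-1)^{k-m}\phi_n(m\xi)\longrightarrow \Delta^k_\xi(\phi)(0)\qquad(n\to\infty),$$
so the functions $g_n(\xi)=|\xi|^{-d-\alpha}\Delta^k_\xi(\phi_n)(0)$ converge pointwise a.e.\ to $g(\xi)=|\xi|^{-d-\alpha}\Delta^k_\xi(\phi)(0)$, and similarly the differences $g_n-g_m$ are exactly the integrands defining $\|\phi_n-\phi_m\|_{\alpha,k}$ because $\Delta^k_\xi$ is linear in its function argument.

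First I would show $\|\phi\|_{\alpha,k}<\infty$. By hypothesis~(iii) the sequence $(g_n)$ is Cauchy in $L^1(\R^d)$, hence bounded: $\sup_n\|g_n\|_{L^1}=\sup_n\|\phi_n\|_{\alpha,k}=:M<\infty$. Since $g_n\to g$ pointwise a.e., Fatou's lemma gives
$$\|\phi\|_{\alpha,k}=\int_{\R^d}|g(\xi)|\,d\xi\le\liminf_{n\to\infty}\int_{\R^d}|g_n(\xi)|\,d\xi\le M<\infty.$$
Next I would establish $\|\phi_n-\phi\|_{\alpha,k}\to 0$. Fix $\varepsilon>0$ and use~(iii) to pick $N$ with $\|\phi_n-\phi_m\|_{\alpha,k}<\varepsilon$ for all $n,m\ge N$; that is, $\|g_n-g_m\|_{L^1}<\varepsilon$. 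For fixed $n\ge N$, apply Fatou's lemma to the nonnegative functions $\xi\mapsto|g_n(\xi)-g_m(\xi)|$ as $m\to\infty$ (their pointwise limit is $|g_n(\xi)-g(\xi)|$ by the pointwise convergence noted above), obtaining
$$\|\phi_n-\phi\|_{\alpha,k}=\int_{\R^d}|g_n(\xi)-g(\xi)|\,d\xi\le\liminf_{m\to\infty}\int_{\R^d}|g_n(\xi)-g_m(\xi)|\,d\xi\le\varepsilon.$$
Since $\varepsilon>0$ was arbitrary, $\|\phi_n-\phi\|_{\alpha,k}\to 0$, completing the proof.

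There is essentially no hard part here: the statement is a disguised version of the elementary fact that a pointwise-convergent $L^1$-Cauchy sequence converges to its pointwise limit in $L^1$, and the only thing to check carefully is that $\Delta^k_\xi$ acts linearly on the function slot so that $\Delta^k_\xi(\phi_n)(0)-\Delta^k_\xi(\phi_m)(0)=\Delta^k_\xi(\phi_n-\phi_m)(0)$, which makes the three hypotheses translate verbatim into pointwise convergence, finiteness of each $\|g_n\|_{L^1}$, and the $L^1$-Cauchy property of $(g_n)$. One minor point worth a sentence is that the singularity at $\xi=0$ causes no trouble: it is a single point of Lebesgue measure zero, so ``pointwise convergence'' and all the integrals are understood over $\R^d\setminus\{0\}$, and Fatou applies without change. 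I would not invoke any machinery beyond Fatou's lemma; in particular there is no need for dominated convergence, since no uniform dominating function is available (indeed the point of the lemma is precisely to get $L^1$-convergence without one).
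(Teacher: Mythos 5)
Your proposal is correct and follows essentially the same route as the paper's own proof: pointwise convergence of $\Delta^k_\xi(\phi_n)(0)$ plus uniform boundedness from the Cauchy condition gives $\|\phi\|_{\alpha,k}<\infty$ via Fatou, and a second application of Fatou to $|g_n-g_m|$ with $m\to\infty$ gives the convergence. The framing as $L^1$-completeness of a pointwise-convergent Cauchy sequence is a nice way to see it, but the underlying argument is identical.
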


\smallskip

\begin{proof}
The Cauchy condition (iii) together with (i) implies
\begin{equation}
\|\phi_n\|_{\alpha, \,k} = \int_{\R^d}\frac{\left|\Delta_\xi^k(\phi_n)(0)\right|}{|\xi|^{d+\alpha}}\,d\xi \,\le\,C
\end{equation}
for some constant $\,C>0\,$ uniformly in $n$. Since $\,\Delta_\xi^k(\phi_n)(0)\,\to\,\Delta^k_\xi(\phi)(0)\,$
for each fixed $\xi$, Fatou's lemma gives
\begin{equation}
\int_{\R^d}\frac{\left|\Delta^k_\xi(\phi)(0)\right|}{|\xi|^{d+\alpha}}\,d\xi
\le \liminf\,\int_{\R^d}\frac{\left|\Delta_\xi^k(\phi_n)(0)\right|}{|\xi|^{d+\alpha}}\,d\xi \le C,
\end{equation}
that is, $\,\|\phi\|_{\alpha, \,k}<\infty.\,$ To see the convergence $\,\|\phi_n -\phi\|_{\alpha,\,k}\,\to\,0,\,$
let $\,\epsilon>0\,$ and choose an integer $N$ so that $\,\|\phi_n -\phi_m\|_{\alpha,\,k} <\epsilon/2\,$ for all
$\,n, m\ge N.\,$ For each fixed $\,n\ge N,\,$ we apply Fatou's lemma once again to find
\begin{align}
\int_{\R^d}\frac{\left|\Delta_\xi^k(\phi_n -\phi)(0)\right|}{|\xi|^{d+\alpha}}\,d\xi
\,&\le\, \liminf_{m\to\infty}\, \int_{\R^d}\frac{\left|\Delta_\xi^k(\phi_n -\phi_m)(0)\right|}{|\xi|^{d+\alpha}}\,d\xi\,\nonumber\\
&\le\, \epsilon/ 2\,,
\end{align}
whence $\,\|\phi_n -\phi\|_{\alpha,\,k} <\epsilon\,$ for all $\,n\ge N.\,$
\end{proof}

\medskip

In what follows, we shall not indicate the dependence on the number of iterates $k$ in
defining our metrics for the sake of simplicity in our notation.

An immediate consequence is the following:

\medskip

\begin{proposition}\label{propositionPM1}
For $\,\alpha>0,\,$ $\Phi^{\,\alpha}_{k}(\R^d)$ is complete with respect to
\begin{equation}\label{metric1}
D_\alpha(\phi, \psi) = \|\phi-\psi\|_\infty + \|\phi-\psi\|_{\alpha, \,k},
\end{equation}
where $k$ is any positive integer.
\end{proposition}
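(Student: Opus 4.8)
The plan is to derive completeness of $\bigl(\Phi^{\,\alpha}_{k}(\R^d), D_\alpha\bigr)$ directly from Lemma \ref{lemmaME1}, using the two summands of $D_\alpha$ for complementary purposes: the uniform term $\|\cdot\|_\infty$ locates a candidate limit \emph{inside} $\Phi(\R^d)$, while the control of $\|\cdot\|_{\alpha,\,k}$ is exactly what Lemma \ref{lemmaME1} supplies. I would first record that $D_\alpha$ is genuinely a metric on $\Phi^{\,\alpha}_{k}(\R^d)$: it is finite there because $\|\phi-\psi\|_\infty\le 2$ for characteristic functions and $\|\phi-\psi\|_{\alpha,\,k}\le\|\phi\|_{\alpha,\,k}+\|\psi\|_{\alpha,\,k}<\infty$ by linearity of $\Delta^k_\xi$ and the triangle inequality for the integral; it separates points because $\|\phi-\psi\|_\infty=0$ already forces $\phi=\psi$; and the triangle inequality is inherited from $\|\cdot\|_\infty$ and from the pseudo-metric $\|\cdot\|_{\alpha,\,k}$. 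Then I would fix an arbitrary $D_\alpha$-Cauchy sequence $(\phi_n)$ in $\Phi^{\,\alpha}_{k}(\R^d)$.

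The argument splits into three short steps. Step one: since $\|\phi_n-\phi_m\|_\infty\le D_\alpha(\phi_n,\phi_m)$, the sequence is Cauchy in the uniform norm, and as recalled in the Introduction (a consequence of L\'evy's continuity theorem) $\Phi(\R^d)$ is complete under $\|\cdot\|_\infty$; hence there is $\phi\in\Phi(\R^d)$ with $\|\phi_n-\phi\|_\infty\to 0$, so in particular $\phi_n\to\phi$ pointwise on $\R^d$. Step two: apply Lemma \ref{lemmaME1} to $(\phi_n)$ with this $\phi$ — hypothesis (i) holds because each $\phi_n\in\Phi^{\,\alpha}_{k}(\R^d)$, hypothesis (ii) is the pointwise convergence just obtained, and hypothesis (iii) holds because $\|\phi_n-\phi_m\|_{\alpha,\,k}\le D_\alpha(\phi_n,\phi_m)\to 0$; the lemma then gives both $\|\phi\|_{\alpha,\,k}<\infty$ and $\|\phi_n-\phi\|_{\alpha,\,k}\to 0$. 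Step three: combining, $\phi\in\Phi(\R^d)$ with $\|\phi\|_{\alpha,\,k}<\infty$, i.e. $\phi\in\Phi^{\,\alpha}_{k}(\R^d)$, and $D_\alpha(\phi_n,\phi)=\|\phi_n-\phi\|_\infty+\|\phi_n-\phi\|_{\alpha,\,k}\to 0$, so $(\phi_n)$ converges in $\bigl(\Phi^{\,\alpha}_{k}(\R^d),D_\alpha\bigr)$ and the space is complete.

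There is no real obstacle once Lemma \ref{lemmaME1} is available; the only points deserving care are bookkeeping ones. First, the differences $\phi_n-\phi$ and $\phi_n-\phi_m$ are not themselves characteristic functions, so one cannot invoke any structural property internal to $\Phi(\R^d)$ for them — this is precisely why Lemma \ref{lemmaME1} is phrased for arbitrary complex-valued functions, and that generality is what makes the second step go through. Second, one must be sure the uniform limit of characteristic functions is again a characteristic function, which is exactly the completeness of $\Phi(\R^d)$ under $\|\cdot\|_\infty$ guaranteed by L\'evy's theorem and quoted in the Introduction. With these two facts in place the proof is a direct verification, and the same template will apply verbatim (with $\Omega^{\,\alpha}_{k}$ and $\|\cdot\|$ measured through $\Re\,\phi$) to the metric built from iterated differences of real parts.
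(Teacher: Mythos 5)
Your proof is correct and follows exactly the route the paper intends: the paper presents Proposition \ref{propositionPM1} as ``an immediate consequence'' of Lemma \ref{lemmaME1}, with the uniform part of $D_\alpha$ supplying a limit in $\Phi(\R^d)$ via L\'evy's continuity theorem and the lemma handling the $\|\cdot\|_{\alpha,\,k}$ part, which is precisely your three-step argument. Your additional remarks on finiteness and the metric axioms, and on why Lemma \ref{lemmaME1} must be stated for arbitrary complex-valued functions, are accurate bookkeeping that the paper leaves implicit.
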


\medskip

It is often advantageous to replace the maximum part of (\ref{metric1})
by other moment-related metrics which still make $\Phi^{\,\alpha}_{k}(\R^d)$
complete. We consider the Fourier-based metric defined in (\ref{Fb1}) which may be recast as
\begin{equation}\label{me4}
d_\beta(\phi, \psi) = \sup_{\xi\in\R^d}\frac{\,\left|\phi(\xi) -\psi(\xi)\right|\,}{|\xi|^\beta}
\quad(\beta>0)
\end{equation}
for each pair $\,\phi, \psi\in\Phi(\R^d).\,$

An inspection reveals if $\,(\phi_n)\,$ is Cauchy
with respect to $d_\beta$, then it satisfies the pointwise Cauchy condition as well as
the uniform Cauchy condition on every compact subset of $\R^d$ with respect to the standard Euclidean metric and hence
$\,(\phi_n)\,$ converges pointwise to a continuous function.

Since $\,|\phi(\xi) -\psi(\xi)|
= \left|\Delta_\xi(\phi-\psi)(0)\right|\,$ whenever $\,\phi, \psi\in \Phi(\R^d),\,$
the natural space for which the metric $d_\beta$ makes sense is
\begin{equation}\label{me5}
\mathcal{K}^\beta(\R^d) = \left\{\,\phi\in\Phi(\R^d)\,:\,
\sup_{\xi\in\R^d}\frac{\,\left|\Delta_\xi(\phi)(0)\right|\,}{|\xi|^{\beta}} <\infty\,\right\},
\end{equation}
which is introduced by Cannone and Karch (\cite{CK}, 2010) in their study of infinite energy solutions to the Boltzmann
equation.

Our discussion leads almost immediately to the following.

\medskip

\begin{proposition}\label{propositionPM2}
For $\,0<\beta<2,\,$ $\mathcal{K}^\beta(\R^d)$ is complete with respect to $d_\beta$.
\end{proposition}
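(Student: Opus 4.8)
\textbf{Proof proposal for Proposition \ref{propositionPM2}.}

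The plan is to show that $d_\beta$ is a genuine metric on $\mathcal{K}^\beta(\R^d)$ and that every $d_\beta$-Cauchy sequence converges in $d_\beta$ to an element of $\mathcal{K}^\beta(\R^d)$. That $d_\beta$ is a metric is immediate: $d_\beta(\phi,\psi)=0$ forces $\phi(\xi)=\psi(\xi)$ for all $\xi\neq 0$, and then $\phi(0)=\psi(0)=1$ by the definition of $\Phi(\R^d)$; symmetry and the triangle inequality are inherited from the supremum norm. The substance is completeness. So let $(\phi_n)$ be $d_\beta$-Cauchy. First I would record, as already noted in the discussion preceding the statement, that $(\phi_n)$ is then pointwise Cauchy and uniformly Cauchy on every ball $\{|\xi|\le R\}$ (because $|\phi_n(\xi)-\phi_m(\xi)|\le d_\beta(\phi_n,\phi_m)|\xi|^\beta\le R^\beta\,d_\beta(\phi_n,\phi_m)$), so $\phi_n\to\phi$ pointwise with $\phi$ continuous and $\phi(0)=1$.

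Next I would pass the Cauchy estimate to the limit: fixing $n$ and letting $m\to\infty$ in $|\phi_n(\xi)-\phi_m(\xi)|\le \varepsilon|\xi|^\beta$ gives $|\phi_n(\xi)-\phi(\xi)|\le\varepsilon|\xi|^\beta$ for all $\xi$ and all large $n$, i.e. $d_\beta(\phi_n,\phi)\to 0$. Taking $n$ fixed and large, $d_\beta(\phi,\phi_n)<\infty$ together with $d_\beta(\phi_n,0)<\infty$ (using $|\phi_n(\xi)|=|\phi_n(\xi)-\phi_n(0)+1|$ and the restriction $\beta<2$, so that near the origin $|\phi_n(\xi)-1|$ is controlled by the defining bound of $\mathcal{K}^\beta$ while for $|\xi|$ bounded away from $0$ boundedness of characteristic functions suffices) yields, by the triangle inequality in the supremum norm, $\sup_\xi |\phi(\xi)-1|/|\xi|^\beta<\infty$, hence $\sup_\xi |\Delta_\xi(\phi)(0)|/|\xi|^\beta<\infty$. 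The only remaining point is that $\phi\in\Phi(\R^d)$, i.e. $\phi$ is positive definite: this is closed under pointwise limits, since for any $\xi_1,\dots,\xi_N$ and $z_1,\dots,z_N\in\mathbb{C}$ the quantity $\sum_{j,k}\phi_n(\xi_j-\xi_k)z_j\overline{z_k}\ge 0$ passes to the limit. Combined with $\phi(0)=1$ and continuity, Bochner's theorem gives $\phi\in\Phi(\R^d)$, so $\phi\in\mathcal{K}^\beta(\R^d)$ and $d_\beta(\phi_n,\phi)\to 0$.

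The main obstacle is the one place where the hypothesis $\beta<2$ is actually used, namely verifying that the limit $\phi$ lies in $\mathcal{K}^\beta(\R^d)$ rather than merely being a continuous positive definite function. The subtlety is that $d_\beta$ measures a weighted oscillation that is most delicate as $|\xi|\to\infty$ (where $|\xi|^{-\beta}$ is small and the numerator is bounded by $2$, so this tail is automatically fine for any $\beta>0$) and as $|\xi|\to 0$; near the origin one needs the numerator $|\phi(\xi)-1|$ to vanish at least like $|\xi|^\beta$, and the uniform bound transferred from the $\phi_n$ delivers exactly this. The constraint $\beta<2$ enters because for $\beta\ge 2$ a nonconstant characteristic function cannot satisfy $|\phi(\xi)-1|=O(|\xi|^\beta)$ near $0$ without being degenerate, so $\mathcal{K}^\beta(\R^d)$ would be trivial; within $0<\beta<2$ the argument above is robust and the transference of the Cauchy bound through Fatou-type pointwise limits is routine.
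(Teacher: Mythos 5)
Your argument is correct and follows essentially the same route as the paper: extract a pointwise (locally uniform) limit $\phi$ of the Cauchy sequence, identify it as a characteristic function, transfer the uniform $\mathcal{K}^\beta$-bound from the $\phi_n$ to $\phi$, and pass the Cauchy estimate to the limit to get $d_\beta(\phi_n,\phi)\to 0$. The only cosmetic differences are that the paper invokes L\'evy's continuity theorem where you use closure of positive definiteness under pointwise limits plus Bochner's theorem, and that your intermediate quantity written as ``$d_\beta(\phi_n,0)$'' should really be the seminorm $\sup_\xi|\Delta_\xi(\phi_n)(0)|/|\xi|^\beta$ defining $\mathcal{K}^\beta(\R^d)$ (finite by hypothesis), after which your triangle-inequality step is exactly the paper's uniform-boundedness step.
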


\smallskip

\begin{proof}
Let $\,(\phi_n)\subset\mathcal{K}^\beta(\R^d)\,$ be a Cauchy sequence with respect to $d_\beta$.
Then it converges to a continuous function by the above reasonings. If we denote by $\phi$
its limit, then $\,\phi\in\Phi(\R^d)\,$ by L\'evy's continuity theorem.

The Cauchy condition implies the uniform boundedness
$$\sup_{\xi\in\R^d}\frac{\,\left|\Delta_\xi(\phi_n)(0)\right|\,}{|\xi|^{\beta}}\,\le\,C,\quad n=1,2,\cdots,$$
which shows $\,\phi\in\mathcal{K}^\beta(\R^d)\,$ on account of $\,\Delta_\xi(\phi_n)(0)\,\to\,\Delta_\xi(\phi)(0).\,$
It is trivial to verify the convergence $\,d_\beta(\phi_n, \phi)\,\to\,0.\,$
\end{proof}

\medskip

\begin{remark} As it is pointed out in \cite{CK},
the reason for restricting $\,0<\beta<2\,$ is that the space $\mathcal{K}^\beta(\R^d)$ with $\,\beta\ge 2\,$
reduces to the singleton $\,\{1\}.\,$
\end{remark}

\medskip

\begin{lemma}\label{lemmaL1}
For $\,0<\beta\le k,\,$ if $\,\mu\in\mathcal{P}_\beta(\R^d)\,$ and $\,\phi=\widehat\mu,\,$ then
\begin{equation}\label{me6}
\sup_{\xi\in\R^d}\frac{\,\left|\Delta_\xi^k(\phi)(0)\right|\,}{|\xi|^{\beta}} \le 2^{k-\beta}
\int_{\R^d} |v|^\beta d\mu(v).
\end{equation}
\end{lemma}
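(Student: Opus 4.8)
The plan is to bound the iterated difference $\Delta_\xi^k(\phi)(0)$ pointwise using the integral representation (\ref{DOI}) and then estimate the integrand by interpolating between the trivial bound $2^k$ and the polynomial bound $|\xi|^k$. Recall from (\ref{DOI}) that for $\phi = \widehat\mu$ one has
\begin{equation*}
\Delta_\xi^k(\phi)(0) = \int_{\R^d}\left(e^{-i\xi\cdot v} - 1\right)^k d\mu(v),
\end{equation*}
so that $\left|\Delta_\xi^k(\phi)(0)\right| \le \int_{\R^d}\left|e^{-i\xi\cdot v} - 1\right|^k d\mu(v)$. The key elementary estimate is $\left|e^{-i\xi\cdot v} - 1\right| \le \min\left(2,\, |\xi\cdot v|\right) \le \min\left(2,\, |\xi|\,|v|\right)$, the first inequality coming from $|e^{i\theta}-1| = 2|\sin(\theta/2)| \le \min(2, |\theta|)$ and the second from Cauchy--Schwarz.

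First I would raise this to the $k$th power to get $\left|e^{-i\xi\cdot v} - 1\right|^k \le \min\left(2^k,\, |\xi|^k |v|^k\right)$. The next step is the interpolation: for any two nonnegative reals $a, b$ and any exponent $\beta$ with $0 < \beta \le k$, one has $\min(a, b) \le a^{1-\beta/k} b^{\beta/k}$ (split into the cases $a \le b$ and $a > b$). Applying this with $a = 2^k$ and $b = |\xi|^k |v|^k$ yields
\begin{equation*}
\left|e^{-i\xi\cdot v} - 1\right|^k \le \left(2^k\right)^{1-\beta/k}\left(|\xi|^k|v|^k\right)^{\beta/k} = 2^{k-\beta}\,|\xi|^\beta\,|v|^\beta.
\end{equation*}
Integrating in $v$ against $\mu$ and dividing by $|\xi|^\beta$ then gives, for every $\xi \ne 0$,
\begin{equation*}
\frac{\left|\Delta_\xi^k(\phi)(0)\right|}{|\xi|^\beta} \le 2^{k-\beta}\int_{\R^d}|v|^\beta\,d\mu(v),
\end{equation*}
and taking the supremum over $\xi$ finishes the proof. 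The right-hand side is finite precisely because $\mu \in \mathcal{P}_\beta(\R^d)$.

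There is no serious obstacle here; the only point requiring a moment's care is the interpolation inequality $\min(a,b) \le a^{1-\beta/k}b^{\beta/k}$ and making sure the exponents match up so that the $|\xi|$ powers cancel exactly, which they do because $b$ contributes $|\xi|^k$ raised to the power $\beta/k$. One should also note the endpoint $\beta = k$ is allowed (the interpolation is trivial there, giving the bound $|v|^k$ directly), and that for $\beta < k$ the constant $2^{k-\beta}$ is strictly larger than $1$, consistent with the cruder direct estimates used earlier in Theorem~\ref{theoremM2}.
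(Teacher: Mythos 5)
Your proof is correct and follows essentially the same route as the paper's: both reduce the claim to the pointwise bound $\left|e^{-i\xi\cdot v}-1\right|^k\le 2^{k-\beta}\,|\xi|^\beta|v|^\beta$ and integrate against $\mu$. The paper achieves this bound by factoring $2^k\left|\sin(\theta/2)\right|^k=2^k\left|\sin(\theta/2)\right|^{k-\beta}\left|\sin(\theta/2)\right|^\beta$ and estimating the two factors separately, which is the same interpolation between the bounds $2$ and $|\theta|$ that you carry out via $\min(a,b)\le a^{1-\beta/k}b^{\beta/k}$.
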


\smallskip

\begin{proof}
By the obvious estimate
\begin{align*}
\sup_{\xi\in\R^d}\frac{\,\left|\Delta_\xi^k(\phi)(0)\right|\,}{|\xi|^{\beta}}
\le \int_{\R^d}\left(\sup_{\xi\ne 0}\frac{\,\left|e^{-i\xi\cdot v/|v|} - 1\right|^k\,}{|\xi|^\beta}
\right)\,\chi_{\{v\ne 0\}}\,|v|^\beta d\mu(v),
\end{align*}
the desired estimate follows upon noticing that the supremum part inside the parenthesis on the right side is bounded above by
\begin{align*}
2^{k-\beta}\,\sup_{r>0}\left\{\sup_{|t|\le 1}\,|t|^\beta\,\left|\sin\left(\frac{rt}{2}\right)\right|^{k-\beta}\,
\left|{\rm{sinc}}\left(\frac{rt}{2}\right)\right|^{\beta}\right\}
 \le 2^{k-\beta}
\end{align*}
with the notation $\,{\rm{sinc}}\,(x) = \sin x/x\,$ for $\,x\in\R.$
\end{proof}

\medskip

\begin{proposition}\label{propositionPM3} Suppose $\,\alpha>0\,$ and it is not an integer. Let $k$ be an integer
with $\,k>\alpha.$ Then
$\Phi^{\,\alpha}_k(\R^d)$ is complete with respect to the metric
\begin{equation}\label{metric2}
F_{\alpha, \,\beta}(\phi, \psi) = d_\beta(\phi, \psi) + \left\|\phi -\psi\right\|_{\alpha, \,k},
\end{equation}
where $\,0<\beta\le\min (\alpha, 1).\,$
\end{proposition}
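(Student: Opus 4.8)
The plan is to show that $\Phi^{\,\alpha}_k(\R^d)$ equipped with $F_{\alpha,\,\beta}$ is a complete metric space by taking an arbitrary Cauchy sequence $(\phi_n)$ and producing a limit inside the space. The first order of business is to confirm that $F_{\alpha,\,\beta}$ is genuinely a metric: the term $d_\beta(\phi,\psi)$ already separates points on $\Phi(\R^d)$ since, as noted after (\ref{me4}), $|\phi(\xi)-\psi(\xi)| = |\Delta_\xi(\phi-\psi)(0)|$ and $d_\beta(\phi,\psi)=0$ forces $\phi\equiv\psi$; the additive term $\|\phi-\psi\|_{\alpha,\,k}$ is a pseudo-metric by the discussion around (\ref{me1}), so the sum is a metric. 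I should also check that $F_{\alpha,\,\beta}$ is finite on $\Phi^{\,\alpha}_k(\R^d)$: finiteness of $\|\phi-\psi\|_{\alpha,\,k}$ is built into the definition of the space, and finiteness of $d_\beta(\phi,\psi)$ follows from Lemma \ref{lemmaL1} applied with the chosen $\beta\le\min(\alpha,1)$, since any $\phi\in\Phi^{\,\alpha}_k(\R^d)$ is by Corollary \ref{corollaryFI2} the transform of some $\mu\in\mathcal P_\alpha(\R^d)\subset\mathcal P_\beta(\R^d)$, so $\sup_\xi |\Delta_\xi(\phi)(0)|/|\xi|^\beta<\infty$ and likewise for the difference of two such functions.

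Next I would extract the limit. Since $(\phi_n)$ is Cauchy in $F_{\alpha,\,\beta}$, it is Cauchy in $d_\beta$, so by the remark preceding Proposition \ref{propositionPM2} it converges pointwise (and locally uniformly) to a continuous function $\phi$; moreover $\phi\in\Phi(\R^d)$ by L\'evy's continuity theorem, exactly as in the proof of Proposition \ref{propositionPM2}. The sequence is also Cauchy in the pseudo-norm $\|\cdot\|_{\alpha,\,k}$, and since each $\phi_n\in\Phi^{\,\alpha}_k(\R^d)$ has $\|\phi_n\|_{\alpha,\,k}<\infty$, Lemma \ref{lemmaME1} applies verbatim (its hypotheses are precisely pointwise convergence plus the Cauchy condition in $\|\cdot\|_{\alpha,\,k}$ plus finiteness of each $\|\phi_n\|_{\alpha,\,k}$) and yields both $\|\phi\|_{\alpha,\,k}<\infty$ and $\|\phi_n-\phi\|_{\alpha,\,k}\to 0$. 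Combining the two pieces, $\phi\in\Phi(\R^d)$ with $\|\phi\|_{\alpha,\,k}<\infty$, i.e. $\phi\in\Phi^{\,\alpha}_k(\R^d)$, and $F_{\alpha,\,\beta}(\phi_n,\phi)=d_\beta(\phi_n,\phi)+\|\phi_n-\phi\|_{\alpha,\,k}\to 0$.

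The only genuinely new verification beyond reassembling earlier lemmas is the convergence $d_\beta(\phi_n,\phi)\to 0$; pointwise convergence alone does not give it, so this is the step I expect to be the main obstacle. The natural route is a Fatou argument in the supremum norm: fix $\varepsilon>0$ and $N$ with $d_\beta(\phi_n-\phi_m)<\varepsilon$ for $n,m\ge N$; for each fixed $\xi\ne 0$ one has $|\phi_n(\xi)-\phi_m(\xi)|/|\xi|^\beta<\varepsilon$, and letting $m\to\infty$ using pointwise convergence gives $|\phi_n(\xi)-\phi(\xi)|/|\xi|^\beta\le\varepsilon$; taking the supremum over $\xi$ then gives $d_\beta(\phi_n,\phi)\le\varepsilon$ for all $n\ge N$. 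This is exactly the mechanism used in the proof of Proposition \ref{propositionPM2}, so it transfers without difficulty. I would then remark that, via the Fourier bijection of Corollary \ref{corollaryFI2}, completeness of $\Phi^{\,\alpha}_k(\R^d)$ in $F_{\alpha,\,\beta}$ transfers to the completeness of $\mathcal P_\alpha(\R^d)$ in the transported metric, which is the form stated in Theorem \ref{theorem3}(i).
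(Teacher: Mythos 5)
Your proposal is correct and follows essentially the same route as the paper: the paper likewise reduces the matter to Lemma \ref{lemmaME1} and Proposition \ref{propositionPM2} and then only verifies that $d_\beta$ is finite on $\Phi^{\,\alpha}_k(\R^d)$, using Theorem \ref{theoremM2}(ii) to write $\phi=\widehat\mu$ with $\mu\in\mathcal{P}_\alpha(\R^d)$, monotonicity of absolute moments, and Lemma \ref{lemmaL1}. Your more explicit unpacking of the Cauchy-sequence argument and the Fatou-type step for $d_\beta(\phi_n,\phi)\to 0$ matches the mechanisms already contained in those cited results.
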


\smallskip

\begin{proof}
In view of Lemma \ref{lemmaME1} and Proposition \ref{propositionPM2}, it suffices to show
$d_\beta$ is well defined on $\Phi^{\,\alpha}_k(\R^d)$. Let $\,\phi\in\Phi^{\,\alpha}_k(\R^d).\,$
By the second part of Theorem \ref{theoremM2}, $\,\phi=\widehat\mu\,$ for some $\,\mu\in\mathcal{P}_\alpha(\R^d)\,$
and there exists a constant $C_1$ with
$$\int_{\R^d}|v|^\alpha d\mu(v)\le C_1\,\|\phi\|_{\alpha, \,k}.$$
By the monotonicity of absolute moments and Lemma \ref{lemmaL1}, we deduce
\begin{align*}
\sup_{\xi\in\R^d}\frac{\,\left|\Delta_\xi(\phi)(0)\right|\,}{|\xi|^\beta}
\le 2^{1-\beta}\,\int_{\R^d}|v|^\beta d\mu(v)
\le C_2\,\|\phi\|_{\alpha, \,k}^{\beta/\alpha}
\end{align*}
for some constant $C_2$. Thus $\,\Phi^{\,\alpha}_{k}(\R^d) \subset \mathcal{K}^\beta(\R^d)\,$
and $\,d_\beta(\phi, \psi)<\infty\,$ for any pair of functions $\,\phi, \psi\in \Phi^{\,\alpha}_k(\R^d)\,$ as we wished to prove.
\end{proof}

\medskip
As for the space $\Omega^{\,\alpha}_k(\R^d)$, all of our reasonings can be modified easily to yield
the following:

\begin{proposition}\label{corollaryPM1} Let $k$ be an integer and $\,\alpha>0.$
\begin{itemize}
\item[\rm{(i)}] $\Omega^{\,\alpha}_k(\R^d)$ is complete with respect to the metric
\begin{equation}\label{metric3}
G_\alpha(\phi, \psi) = \|\phi -\psi\|_\infty + \left\|\Re\,\phi - \Re\,\psi\right\|_{\alpha, \,k}
\end{equation}
\item[\rm{(ii)}] If $k$ is odd and either $\alpha$ is non-integral with $\,0<\alpha<k+1\,$ or $\,\alpha =k,\,$
then $\Omega^{\,\alpha}_k(\R^d)$ is complete with respect to the metric
\begin{equation}\label{metric4}
H_{\alpha, \,\beta}(\phi, \psi) = d_\beta(\phi, \psi) + \left\|\Re\,\phi -\Re\,\psi\right\|_{\alpha, \,k},
\end{equation}
where $\,0<\alpha\le\min(\alpha, 1).\,$
\end{itemize}
\end{proposition}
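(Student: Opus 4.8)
The plan is to transcribe the proofs of Proposition \ref{propositionPM1} and Proposition \ref{propositionPM3}, replacing $\phi$ by $\Re\,\phi$ wherever the functional $\|\cdot\|_{\alpha,k}$ appears and using the identity $\Delta^k_\xi(\Re\,\phi)(0)=\Re\bigl[\Delta^k_\xi(\phi)(0)\bigr]$, so that $\phi\in\Omega^{\,\alpha}_{k}(\R^d)$ precisely when $\phi\in\Phi(\R^d)$ and $\|\Re\,\phi\|_{\alpha,k}<\infty$. Both $G_\alpha$ and $H_{\alpha,\beta}$ are genuine metrics, not merely pseudo-metrics, since each carries a term — $\|\cdot\|_\infty$ respectively $d_\beta$ — that already separates the points of $\Phi(\R^d)$; hence only completeness needs to be proved.

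For part (i), I would start from a $G_\alpha$-Cauchy sequence $(\phi_n)\subset\Omega^{\,\alpha}_{k}(\R^d)$. Since $\|\phi_n-\phi_m\|_\infty\to0$, the sequence converges uniformly to a continuous function $\phi$, and $\phi\in\Phi(\R^d)$ by L\'evy's continuity theorem. Applying Lemma \ref{lemmaME1} to the real-valued sequence $(\Re\,\phi_n)$ — whose hypotheses hold because $\|\Re\,\phi_n\|_{\alpha,k}<\infty$ (as $\phi_n\in\Omega^{\,\alpha}_{k}(\R^d)$), $\Re\,\phi_n\to\Re\,\phi$ pointwise, and $\|\Re\,\phi_n-\Re\,\phi_m\|_{\alpha,k}\to0$ by the Cauchy condition for $G_\alpha$ — I obtain $\|\Re\,\phi\|_{\alpha,k}<\infty$, hence $\phi\in\Omega^{\,\alpha}_{k}(\R^d)$, together with $\|\Re\,\phi_n-\Re\,\phi\|_{\alpha,k}\to0$; combined with the uniform convergence this yields $G_\alpha(\phi_n,\phi)\to0$. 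Finiteness of $G_\alpha$ on $\Omega^{\,\alpha}_{k}(\R^d)$ is immediate from $\|\phi-\psi\|_\infty\le2$ and $\|\Re\,\phi-\Re\,\psi\|_{\alpha,k}\le\|\Re\,\phi\|_{\alpha,k}+\|\Re\,\psi\|_{\alpha,k}$, using linearity of $\Delta^k_\xi$.

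For part (ii), I would run the same scheme for an $H_{\alpha,\beta}$-Cauchy sequence: being $d_\beta$-Cauchy it converges pointwise, and uniformly on compact sets, to a continuous $\phi\in\Phi(\R^d)$, and Lemma \ref{lemmaME1} applied to $(\Re\,\phi_n)$ again gives $\phi\in\Omega^{\,\alpha}_{k}(\R^d)$ and $\|\Re\,\phi_n-\Re\,\phi\|_{\alpha,k}\to0$. The genuinely new point is that $d_\beta$ must be shown finite on $\Omega^{\,\alpha}_{k}(\R^d)$; here the hypothesis that $k$ is odd and $\alpha$ lies in the stated range is used, for it makes the analogue of Theorem \ref{theoremM2}(ii) available through Theorem \ref{theoremM2}(iii): each $\psi\in\Omega^{\,\alpha}_{k}(\R^d)$ equals $\widehat\mu$ for some $\mu\in\mathcal{P}_\alpha(\R^d)$ with $\int_{\R^d}|v|^\alpha d\mu(v)\le C\|\Re\,\psi\|_{\alpha,k}$. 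Then, just as in the proof of Proposition \ref{propositionPM3}, the monotonicity of absolute moments (inequality (\ref{OC2})) and Lemma \ref{lemmaL1} with a single iterate (legitimate since $0<\beta\le1$) give
\[
\sup_{\xi\in\R^d}\frac{|\Delta_\xi(\psi)(0)|}{|\xi|^\beta}\le 2^{1-\beta}\int_{\R^d}|v|^\beta d\mu(v)\le 2\Bigl(\int_{\R^d}|v|^\alpha d\mu(v)\Bigr)^{\beta/\alpha}\le C'\,\|\Re\,\psi\|_{\alpha,k}^{\beta/\alpha}<\infty,
\]
so $\Omega^{\,\alpha}_{k}(\R^d)\subset\mathcal{K}^\beta(\R^d)$ and $d_\beta$ is finite there; the convergence $d_\beta(\phi_n,\phi)\to0$ then follows from the Cauchy property together with the pointwise limit, exactly as in the proof of Proposition \ref{propositionPM2} (pick $N$ with $d_\beta(\phi_n,\phi_m)<\epsilon$ for $n,m\ge N$, fix $n\ge N$, and let $m\to\infty$ in $|\phi_n(\xi)-\phi_m(\xi)|/|\xi|^\beta$ for each $\xi\ne0$).

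The main obstacle is exactly that finiteness of $d_\beta$ on $\Omega^{\,\alpha}_{k}(\R^d)$ in part (ii): it forces the passage through the representing measure $\mu$, and hence requires the representation-plus-bound of Theorem \ref{theoremM2}(iii), which is available only in the asserted regime. Part (i), by contrast, needs no such passage, since $\|\cdot\|_\infty\le2$ holds automatically on characteristic functions; the remaining ingredients (L\'evy's theorem, Lemma \ref{lemmaME1}, Lemma \ref{lemmaL1}, monotonicity of moments) are already recorded, so the rest is a mechanical adaptation of the $\Phi^{\,\alpha}_{k}(\R^d)$ arguments.
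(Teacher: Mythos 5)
Your proposal is correct and follows exactly the route the paper intends: the paper gives no separate proof of this proposition, stating only that the arguments for $\Phi^{\,\alpha}_{k}(\R^d)$ (Lemma \ref{lemmaME1}, Propositions \ref{propositionPM1}--\ref{propositionPM3}) "can be modified easily," and your write-up is precisely that modification, correctly isolating the one non-mechanical point — the finiteness of $d_\beta$ on $\Omega^{\,\alpha}_{k}(\R^d)$ via Theorem \ref{theoremM2}(iii), which is where the oddness of $k$ and the range of $\alpha$ enter.
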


\smallskip

Each metric structure described in Propositions \ref{propositionPM1}, \ref{propositionPM3} and
\ref{corollaryPM1} can be transferred into $\mathcal{P}_\alpha(\R^d)$ via the Fourier transform. To illustrate, let us consider
$D_\alpha$ which is defined in (\ref{metric1}) and a complete metric on the space $\Phi^{\,\alpha}_k(\R^d)$
for any integer $\,k>\alpha.\,$ For the sake of simplicity, we use the same notation $D_\alpha$ to define
$\,D_\alpha(\mu, \nu) = D_\alpha(\widehat\mu, \widehat\nu)\,$ for each pair
$\,\mu, \nu\in \p_\alpha(\R^d).\,$ Since the Fourier transform is a bijection from $\p_\alpha(\R^d)$ onto the space
$\Phi^{\,\alpha}_k(\R^d)$, it is evident that $D_\alpha$ is a complete probability metric on $\p_\alpha(\R^d)$.

By the same transference principle, if we extend each metric
$\,F_{\alpha, \,\beta}, G_\alpha, H_{\alpha, \,\beta}\,$ defined in (\ref{metric2}),
(\ref{metric3}), (\ref{metric4}), respectively, into $\p_\alpha(\R^d)$ by using
the Fourier transform and keep the same notation, we obtain the following.

\medskip

\begin{theorem} Let $k$ be an integer. For $\,\alpha>0,\,$ let $\,0<\beta\le\min(\alpha, 1).\,$
\begin{itemize}
\item[{\rm(i)}] If $\alpha$ is non-integral with $\,0<\alpha<k,$ then each of $\, D_\alpha, \,F_{\alpha, \,\beta},\,
G_\alpha,\,H_{\alpha, \,\beta}\,$ defines a complete probability metric on $\mathcal{P}_\alpha(\R^d).$
\item[{\rm(ii)}] If $k$ is odd and either $\alpha$ is non-integral with $\,0<\alpha<k+1\,$ or $\,\alpha=k,\,$
then each of $\,G_\alpha,\,H_{\alpha, \,\beta}\,$ defines a complete probability metric on $\p_\alpha(\R^d)$.
\end{itemize}
\end{theorem}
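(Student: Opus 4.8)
The plan is to prove the theorem by transference through the Fourier transform. On $\mathcal{P}_\alpha(\R^d)$ we put $D_\alpha(\mu,\nu)=D_\alpha(\widehat\mu,\widehat\nu)$, and likewise for $F_{\alpha,\beta}$, $G_\alpha$, $H_{\alpha,\beta}$, where on the right the symbols denote the functionals of characteristic functions defined in (\ref{metric1})--(\ref{metric4}). Three points turn each of these into a probability metric. First, finiteness: $\|\widehat\mu-\widehat\nu\|_\infty\le 2$, and $\|\widehat\mu-\widehat\nu\|_{\alpha,k}\le\|\widehat\mu\|_{\alpha,k}+\|\widehat\nu\|_{\alpha,k}<\infty$ by Theorem \ref{theoremM2}, the same bound holding for $\|\Re\widehat\mu-\Re\widehat\nu\|_{\alpha,k}$ because $\Delta_\xi^k(\Re\phi)(0)=\Re[\Delta_\xi^k(\phi)(0)]$, while the finiteness of $d_\beta$ on the pertinent Fourier-image space is part of Propositions \ref{propositionPM3} and \ref{corollaryPM1}. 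Second, the metric axioms: each functional is a sum of a pseudo-metric with a genuine metric --- $\|\cdot\|_\infty$ in $D_\alpha$ and $G_\alpha$, $d_\beta$ in $F_{\alpha,\beta}$ and $H_{\alpha,\beta}$ --- and since $\|\widehat\mu-\widehat\nu\|_\infty=0$ (respectively $d_\beta(\widehat\mu,\widehat\nu)=0$) forces $\widehat\mu=\widehat\nu$, hence $\mu=\nu$ by uniqueness of the Fourier transform, the transferred functional vanishes only on the diagonal. Third, completeness, which I obtain by coupling the completeness results of Propositions \ref{propositionPM1}, \ref{propositionPM3}, \ref{corollaryPM1} with the bijectivity statements of Corollary \ref{corollaryFI2}.

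For part (i) fix $k$ with $0<\alpha<k$, $\alpha$ non-integral, and $0<\beta\le\min(\alpha,1)$. By Corollary \ref{corollaryFI2}(i) the Fourier transform is a bijection from $\mathcal{P}_\alpha(\R^d)$ onto $\Phi^\alpha_k(\R^d)$, so Proposition \ref{propositionPM1} and Proposition \ref{propositionPM3} (applicable since $k>\alpha$ and $0<\beta\le\min(\alpha,1)$) transfer directly to give completeness of $D_\alpha$ and of $F_{\alpha,\beta}$. For $G_\alpha$ and $H_{\alpha,\beta}$ the relevant space in Proposition \ref{corollaryPM1} is $\Omega^\alpha_k(\R^d)$, so I first record that in the range $0<\alpha<k$ one has $\Phi^\alpha_k(\R^d)=\Omega^\alpha_k(\R^d)$: the inclusion $\subseteq$ is immediate from $\Delta_\xi^k(\Re\phi)(0)=\Re[\Delta_\xi^k(\phi)(0)]$, and the reverse inclusion follows by rerunning the Fatou-lemma argument in the proof of Theorem \ref{theoremM2}(ii) with $(e^{-i\xi\cdot v}-1)^k$ replaced by its real part, the only ingredient being that the constant $B(k,\alpha,d)$ of (\ref{Pr8}) stays nonzero for non-integral $\alpha<k$, independently of the parity of $k$. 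With this identification, Proposition \ref{corollaryPM1}(i) gives completeness of $G_\alpha$; and the argument behind Proposition \ref{corollaryPM1}(ii) --- Lemma \ref{lemmaME1} for the $\|\Re(\cdot)\|_{\alpha,k}$ summand together with Proposition \ref{propositionPM2} for the $d_\beta$ summand, which remains valid for even $k$ as soon as $\Phi^\alpha_k(\R^d)=\Omega^\alpha_k(\R^d)$ places the space inside $\mathcal{K}^\beta(\R^d)$ --- gives completeness of $H_{\alpha,\beta}$.

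For part (ii) take $k$ odd and $\alpha$ either non-integral with $0<\alpha<k+1$ or $\alpha=k$, with $0<\beta\le\min(\alpha,1)$. Here Corollary \ref{corollaryFI2}(ii) states that the Fourier transform is a bijection from $\mathcal{P}_\alpha(\R^d)$ onto $\Omega^\alpha_k(\R^d)$, and Proposition \ref{corollaryPM1}(i)--(ii) give completeness of $G_\alpha$ and $H_{\alpha,\beta}$ there under precisely these hypotheses, so transference finishes the proof; only these two metrics are asserted in this range because, once $\alpha\ge k$, the space $\Phi^\alpha_k(\R^d)$ is strictly smaller than the Fourier image of $\mathcal{P}_\alpha(\R^d)$, so $D_\alpha$ and $F_{\alpha,\beta}$ need not even be finite on $\mathcal{P}_\alpha(\R^d)$. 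The one step calling for genuine care is the identity $\Phi^\alpha_k(\R^d)=\Omega^\alpha_k(\R^d)$ for even $k$ used in part (i), since it is not literally among the statements of Corollary \ref{corollaryFI2}; but, as indicated, it is a verbatim rerun of the relevant portion of the proof of Theorem \ref{theoremM2}, and everything else is bookkeeping.
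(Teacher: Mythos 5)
Your proof is correct and follows essentially the same route as the paper, which simply transfers the completeness results of Propositions \ref{propositionPM1}, \ref{propositionPM3} and \ref{corollaryPM1} to $\mathcal{P}_\alpha(\R^d)$ via the bijectivity of the Fourier transform from Corollary \ref{corollaryFI2}. You in fact supply one detail the paper leaves implicit: the identification $\Phi^{\,\alpha}_k(\R^d)=\Omega^{\,\alpha}_k(\R^d)$ for non-integral $\,0<\alpha<k\,$ with $k$ even, needed so that $G_\alpha$ and $H_{\alpha,\beta}$ transfer in part (i), and your rerun of the Fatou argument with the symmetrized kernel is the right way to justify it.
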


\section{Computations of absolute moments}
The purpose of this section is to illustrate the use of our formulae by evaluating
absolute moments of probability measures which arise frequently in the theory of probability and applied sciences.

\subsection{Stretched exponentials}
For each $\,p>0,\,$ we consider the function $\,\phi_p(\xi) = e^{-|\xi|^p},\,$ often referred to as the stretched
exponentials in applied sciences. It is shown by I. Scheonberg (\cite{S}, 1938) that $\phi_p$ is
positive definite on $\R^d$ for $\,0<p\le 2\,$ and not positive definite for $\,p>2.\,$
By Bochner's theorem,
there exists a unique probability measure $\mu_p$ on $\R^d$ satisfying $\,\phi_p =\widehat{\mu_p}\,$
for each $\,0<p\le 2.\,$

Evidently, $\phi_p$ is integrable on $\R^d$ and it follows from the Fourier inversion theorem
for measures that $\mu_p$ is indeed a probability density function on $\R^d$
in the sense $\, d\mu_p(v) = E_p(v) dv\,$ and
\begin{equation}\label{SE1}
E_p(v) = (2\pi)^{-d}\int_{\R^d} e^{i v\cdot\xi -|\xi|^p}\,d\xi\qquad(v\in\R^d, \,\,0<p\le 2)
\end{equation}
(see the book of M. Pinsky \cite{Pin}, p. 232, for instance).

For $\,a>-1/2,\,$ let $J_a$ be the Bessel function of the first kind of order $a$.
By a well-known formula for the Fourier transform of a radially symmetric function,
the Fourier inversion (\ref{SE1}) can be evaluated as
\begin{align}\label{SE2}
E_p(v) &=(2\pi)^{-d/2}\,|v|^{-(d-2)/2}\, \int_0^\infty e^{-r^p}r^{d/2} \,J_{\frac{d-2}{2}}(r|v|)\,dr\nonumber\\
&= \frac {2(4\pi)^{- d/2}}{p}\,\sum_{m=0}^\infty \frac{(-1)^m \,\Gamma\left(\frac{2m +d}{p}\right)}{m!\,\Gamma\left(\frac{2m +d}{2}\right)}
\,\left(\frac{|v|}{2}\right)^{2m}
\end{align}
valid for any dimension $d$ and $\,0<p\le 2.\,$ In the special case $\,p=1\,$ or $\,p=2,\,$
it is simple to represent this series expansion as a closed form
\begin{align}\label{SE3}
E_1(v) &=\Gamma\left(\frac{d+1}{2}\right)\,\left[\pi\bigl(1+|v|^2\bigr)\right]^{-(d+1)/2}\,,\nonumber\\
E_2(v) &= (4\pi)^{-d/2}\,e^{-|v|^2/4}\,.
\end{align}

The probability densities $E_p$ with $\,0<p\le 2\,$ are fundamental in many fields of mathematics
such as the probability theory of stable L\'evy processes and the theory of heat-diffusion equations (see the last section).
Concerning its absolute moments, R. Blumental and R. Getoor (\cite{BG}, 1960) proved
\begin{equation}\label{SE4}
\lim_{|v|\to\infty} |v|^{d+p} E_p(v) =\frac{p 2^{p-1}}{\pi^{d/2 +1}}\,\sin\left(\frac{p\pi}{2}\right)
\Gamma\left(\frac{d+p}{2}\right)\Gamma\left(\frac p2\right)
\end{equation}
and this asymptotic property implies that
each $E_p$ has finite absolute moments only of orders less than $p$ when $\,0<p<2.\,$
Of course, the Gaussian $E_2$ possesses finite absolute moments of all orders.

\medskip

\begin{proposition}\label{theoremE}
Let $\,0<p\le 2\,$ and $E_p$ be defined as in (\ref{SE1}) or (\ref{SE2}).
\begin{itemize}
\item[\rm{(E1)}] For $\,0\le\alpha<p<2\,,$
\begin{equation}\label{E1}
\int_{\R^d} E_p(v) |v|^\alpha dv
= \frac{\,2^\alpha\,\Gamma\left(1-\frac\alpha p\right)\Gamma\left(\frac{\alpha +d}{2}\right)\,}
{\Gamma\left(1-\frac\alpha 2\right)\Gamma\left(\frac{d}{2}\right)}\,.
\end{equation}
Moreover, the absolute moment of $E_p$ becomes singular as $\,\alpha\to p-\,\,$ with the asymptotic behavior \footnote{For real-valued functions
$\,A, B,\,$ the notation
$\,A(\alpha) \sim B(\alpha)\,$ as $\,\alpha\to\alpha_0\,$ means $$\,\lim_{\alpha\to\alpha_0}\, \frac{A(\alpha)}{B(\alpha)} =1\,.$$}
\begin{equation}\label{E2}
\int_{\R^d} E_p(v) |v|^\alpha dv  \,\sim \,\frac{\,2^p\,\Gamma\left(\frac{p +d}{2}\right)\,}{
\Gamma\left(1-\frac{p}{2}\right)\Gamma\left(\frac{d}{2}\right)}\cdot \left(1-\frac{\alpha}{p}\right)^{-1}\,.
\end{equation}
\item[\rm{(E2)}] For $\,p=2\,$ and $\,0\le\alpha<\infty\,,$
\begin{equation}\label{E3}
\int_{\R^d} E_2(v) |v|^\alpha dv  = \frac{\,2^\alpha\,\Gamma\left(\frac{\alpha +d}{2}\right)\,}{\Gamma\left(\frac{d}{2}\right)}\,.
\end{equation}
\end{itemize}
\end{proposition}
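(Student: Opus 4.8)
The plan is to run the stretched exponential $\phi_p(\xi)=e^{-|\xi|^p}$ through Theorem \ref{theoremM1}. First one checks the hypothesis: for $0<\alpha<p<2$ the measure $\mu_p$ with $\widehat{\mu_p}=\phi_p$ belongs to $\p_\alpha(\R^d)$, which is immediate from the Blumenthal--Getoor asymptotic (\ref{SE4}), so that $E_p(v)=O(|v|^{-d-p})$ at infinity and $|v|^\alpha E_p\in L^1(\R^d)$ exactly when $\alpha<p$. Since $\phi_p$ is real-valued, I would take $k=1$ and invoke part (ii) of Theorem \ref{theoremM1} (the odd-$k$ case), which is valid for every $\alpha\in(0,2)$ and in particular throughout $(0,p)$; with $S(1,\alpha)=1$ it gives
\[
\int_{\R^d}E_p(v)\,|v|^\alpha\,dv=A(1,\alpha,d)\int_{\R^d}\frac{e^{-|\xi|^p}-1}{|\xi|^{d+\alpha}}\,d\xi,
\]
so the whole problem reduces to one radial integral.

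To evaluate that integral I would pass to polar coordinates, pulling out $|\s^{d-1}|=2\pi^{d/2}/\Gamma(d/2)$, and then substitute $s=r^p$. The remaining one-dimensional integral becomes $\frac1p\int_0^\infty(e^{-s}-1)\,s^{-1-\alpha/p}\,ds$, which, since $0<\alpha/p<1$, is precisely the analytic-continuation representation of $\Gamma(-\alpha/p)$; the functional equation (\ref{G1}) rewrites it as $-\frac1\alpha\Gamma(1-\alpha/p)$, so that
\[
\int_{\R^d}\frac{e^{-|\xi|^p}-1}{|\xi|^{d+\alpha}}\,d\xi=-\,\frac{2\pi^{d/2}\,\Gamma(1-\alpha/p)}{\alpha\,\Gamma(d/2)}.
\]

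Multiplying by $A(1,\alpha,d)$ and simplifying the resulting product of gamma and trigonometric factors with (\ref{G1})--(\ref{G3}) then gives (\ref{E1}); this bookkeeping is the only delicate point, and the two moves that make it collapse are the duplication formula (\ref{G3}) applied to $\Gamma(\alpha+1)$, together with $\Gamma(z+1)=z\Gamma(z)$ on $\Gamma(\alpha/2+1)$, and the reflection formula (\ref{G2}) used to turn $\sin(\alpha\pi/2)\,\Gamma(\alpha/2)$ into $\pi/\Gamma(1-\alpha/2)$. The singular asymptotic (\ref{E2}) is then immediate: in (\ref{E1}) every factor other than $\Gamma(1-\alpha/p)$ is continuous at $\alpha=p$, while $\Gamma(1-\alpha/p)\sim(1-\alpha/p)^{-1}$ as $\alpha\to p^{-}$ because $\Gamma$ has a simple pole at $0$ with residue $1$; substituting $\alpha=p$ in the surviving factors yields (\ref{E2}).

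Finally, for (E2) the range $0\le\alpha<2$ is already contained in (\ref{E1}) with $p=2$, since there the factor $\Gamma(1-\alpha/p)=\Gamma(1-\alpha/2)$ cancels. For $\alpha\ge2$, where Theorem \ref{theoremM1} no longer applies, I would compute the moment directly from the explicit Gaussian density (\ref{SE3}): polar coordinates and the substitution $s=|v|^2/4$ turn $(4\pi)^{-d/2}\int_{\R^d}e^{-|v|^2/4}|v|^\alpha\,dv$ into a single $\Gamma$-integral, which produces (\ref{E3}) for all $\alpha\ge0$ at once. (One could instead extend (\ref{E1}) past $\alpha=2$ via the even-integer moment formula and the order-continuity of Lemma \ref{lemmaOC}, but the direct evaluation is quicker.)
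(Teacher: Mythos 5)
Your proposal is correct and follows essentially the same route as the paper: reduce to the radial integral $\int_0^\infty r^{-1-\alpha}\Delta_r^k(e^{-r^p})(0)\,dr$ via (\ref{M13}) and polar coordinates, evaluate it as $-S(k,\alpha)\Gamma(1-\alpha/p)/\alpha$ (you just specialize to $k=1$, where $S(1,\alpha)=1$, and phrase the integration by parts as the Cauchy--Saalsch\"utz representation of $\Gamma(-\alpha/p)$), simplify with (\ref{G1})--(\ref{G3}), read off the pole of $\Gamma(1-\alpha/p)$ for the asymptotics, and handle $p=2$ from the explicit Gaussian density. The only nit is a harmless wording slip where the prefactor $1/p$ momentarily gets absorbed into ``$\Gamma(-\alpha/p)$''; your displayed value $-2\pi^{d/2}\Gamma(1-\alpha/p)/(\alpha\,\Gamma(d/2))$ is correct.
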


\medskip

Upon integrating in polar coordinates, it is easy to deduce
\begin{equation}\label{CC1}
\int_{\R^d}\frac{\Delta_\xi^k(f)(0)}{|\xi|^{d+\alpha}}\,d\xi = \left|\s^{d-1}\right|
\int_0^\infty r^{-1-\alpha}\left[\Delta_r^k(F)(0)\right] dr
\end{equation}
for a radially symmetric function $f$ on $\R^d$ with $\,f(\xi) = F(|\xi|),\,$ which will be
used as an alternative of (\ref{M12}) or (\ref{M13}) in the proof below.

\medskip

\begin{proof} (E1) Let $k$ be a positive integer. Write
$$\Delta_r^k\left(e^{-r^p}\right)(0) = \sum_{m=1}^k \binom km (-1)^{k-m}\left(e^{-(mr)^p} -1\right).$$
Integrating by parts and changing variables yield
\begin{align*}
\int_0^\infty r^{-1-\alpha}\left(e^{-(mr)^p} -1\right)dr = -\,\frac{m^\alpha\Gamma(1-\alpha/p)}{\alpha}
\end{align*}
for each $m$ and hence
\begin{align}\label{E4}
\int_0^\infty r^{-1-\alpha}\left[\Delta_r^k\left(e^{-r^p}\right)(0)\right] dr
= - S(k, \alpha)\frac{\Gamma(1-\alpha/p)}{\alpha}\,.
\end{align}
The formula (\ref{M13}), combined with (\ref{CC1}), gives (\ref{E1}) at once upon simplifying
the constant terms with the aid of (\ref{G1})-(\ref{G3}).

The asymptotic behavior of (\ref{E2}) follows from (\ref{E1}) upon noticing
$$\sin(\pi\alpha/p) = \sin\left(\pi(1-\alpha/p)\right) \,\sim\, \pi(1-\alpha/p)\quad\text{as}
\,\, \alpha\to p\,.$$

(E2) In the case $\,p=2,\,$ the absolute moment formula (\ref{E3}) follows easily from (\ref{SE3}). With no recourse to
this explicit density representation, it is also possible to obtain (\ref{E3}) directly
from the formula (\ref{M13}). In fact, by dividing the range of $\alpha$ into half-closed
intervals of length two and proceeding exactly in the same pattern as in the proof of Lemma \ref{lemmaI}, it is not hard to find
\begin{equation}
\int_0^\infty r^{-1-\alpha}\left[\Delta_r^k\left(e^{-r^2}\right)(0)\right] dr
= -\,\frac{\pi\,S(k, \alpha)}{\,2\sin(\pi\alpha/2)\Gamma(1+\alpha/2)\,}
\end{equation}
for $\,0<\alpha<k+1\,$ with an odd $k$ or $\,0<\alpha<k\,$ with an even $k$. Inserting it into
(\ref{CC1}), the formula (\ref{M13}) and simplifying the constant terms, we obtain (\ref{E3})
for each $\,\alpha>0\,$ unless $\alpha$ is not an even integer. By the order-continuity of Lemma \ref{lemmaOC},
(\ref{E3}) continues to be valid for all $\,\alpha\ge 0.\,$
\end{proof}

\subsection{Schoenberg classes}
For $\,0<p\le 2,\,$ the Schoenberg class $\Omega_p(\R^d)$ is defined to be the family of
function $\phi$ on $\R^d$ which admits the integral representation
\begin{equation}\label{Sh1}
\phi(\xi) = \int_0^\infty e^{-t|\xi|^p} d\nu(t)\quad(\xi\in\R^d)
\end{equation}
for some probability measure $\nu$ on $[0, \infty)$ (see L. Golinskii, M. Malamud and L. Oridoroga \cite{GMO}, 2015
and further references therein).

Closely related with the stretched exponentials or stable Lev\'y processes, it is evident
$\,\Omega_p(\R^d)\subset\Phi(\R^d)\,$ for $\,0<p\le 2.\,$ Interchanging the order of integrations,
Proposition \ref{theoremE} yields the following computations.

\medskip

\begin{proposition} Suppose $\,0<p\le 2\,$ and $\,\phi\in\Omega_p(\R^d)\,$ represented as (\ref{Sh1}).
Let $\mu$ be the probability measure determined by $\,\widehat\mu = \phi.$
\begin{itemize}
\item[\rm{(S1)}] For $\,0<\alpha<p<2\,,$
\begin{equation}\label{Sh3}
\int_{\R^d} |v|^\alpha d\mu(v)
= \frac{\,2^\alpha\,\Gamma\left(1-\frac\alpha p\right)\Gamma\left(\frac{\alpha +d}{2}\right)\,}
{\Gamma\left(1-\frac\alpha 2\right)\Gamma\left(\frac{d}{2}\right)}
\int_0^\infty t^{\alpha/p} d\nu(t)\,.
\end{equation}
\item[\rm{(S2)}] For $\,p=2\,$ and $\,\alpha\ge 0\,,$
\begin{equation}\label{Sh4}
\int_{\R^d} |v|^\alpha d\mu(v) = \frac{\,2^\alpha\,\Gamma\left(\frac{\alpha +d}{2}\right)\,}{\Gamma\left(\frac{d}{2}\right)}
\int_0^\infty t^{\alpha/p} d\nu(t)\,.
\end{equation}
\end{itemize}
\end{proposition}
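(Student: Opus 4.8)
The plan is to deduce the moment formulae for $\phi\in\Omega_p(\R^d)$ directly from Proposition \ref{theoremE} by writing the probability density associated to $\phi$ as a superposition of the densities $E_p$ under suitable scalings, and then interchanging the order of integration. First I would observe that if $\,\phi(\xi)=\int_0^\infty e^{-t|\xi|^p}\,d\nu(t)\,$, then for each fixed $t>0$ the factor $e^{-t|\xi|^p}$ is itself the characteristic function of a rescaled copy of $\mu_p$: writing $\,t=s^p\,$ and using $\,e^{-t|\xi|^p}=e^{-|s\xi|^p}=\widehat{\mu_p}(s\xi),\,$ one has $\,e^{-t|\xi|^p}=\widehat{(D_s)_*\mu_p}(\xi)\,$ where $\,D_s(v)=sv\,$ is the dilation. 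Equivalently, the density of this factor is $\,s^{-d}E_p(v/s)=t^{-d/p}E_p\!\left(t^{-1/p}v\right)\,$. By the Fourier inversion theorem for integrable functions (as in (\ref{SE1})) together with Fubini's theorem — justified since $E_p\ge 0$ and $\nu$ is a probability measure — the measure $\mu$ with $\widehat\mu=\phi$ is absolutely continuous with density
\begin{equation*}
\frac{d\mu}{dv}(v)=\int_0^\infty t^{-d/p}\,E_p\!\left(t^{-1/p}v\right)d\nu(t).
\end{equation*}

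Next I would compute the absolute moment by inserting this representation and applying Tonelli's theorem (everything nonnegative):
\begin{equation*}
\int_{\R^d}|v|^\alpha\,d\mu(v)=\int_0^\infty\!\!\left(\int_{\R^d}t^{-d/p}\,E_p\!\left(t^{-1/p}v\right)|v|^\alpha\,dv\right)d\nu(t).
\end{equation*}
Changing variables $\,w=t^{-1/p}v\,$ in the inner integral produces a factor $t^{\alpha/p}$ and reduces it to $\,t^{\alpha/p}\int_{\R^d}E_p(w)|w|^\alpha\,dw\,$. Now I invoke Proposition \ref{theoremE}: in the range $0<\alpha<p<2$ the inner integral equals $\,t^{\alpha/p}\cdot 2^\alpha\Gamma(1-\alpha/p)\Gamma\!\left(\tfrac{\alpha+d}{2}\right)/[\Gamma(1-\alpha/2)\Gamma(d/2)]\,$ by (\ref{E1}), and for $p=2$, $\alpha\ge 0$ it equals $\,t^{\alpha/2}\cdot 2^\alpha\Gamma\!\left(\tfrac{\alpha+d}{2}\right)/\Gamma(d/2)\,$ by (\ref{E3}). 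Pulling the constant out of the $t$-integral yields (\ref{Sh3}) and (\ref{Sh4}) respectively, with the remaining factor $\int_0^\infty t^{\alpha/p}\,d\nu(t)$.

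The main obstacle is not analytic depth but bookkeeping: one must verify carefully that the density superposition is legitimate — i.e. that $\phi$ is indeed the Fourier transform of the stated density — and that the interchange of integrals is valid. For the first point, integrability of $e^{-t|\xi|^p}$ in $\xi$ for each $t>0$ together with Fubini applied to the inversion integral $(2\pi)^{-d}\int_{\R^d}e^{iv\cdot\xi}\phi(\xi)\,d\xi$ does the job, but one should note that $\phi$ itself need not be integrable (e.g. if $\nu$ has mass near $t=0$), so the cleanest route is to identify $\mu$ as the weak limit / mixture $\int_0^\infty (D_{t^{1/p}})_*\mu_p\,d\nu(t)$ at the level of measures and check that its characteristic function is $\phi$ by Fubini on $\int e^{-i\xi\cdot v}$, then deduce absolute continuity afterwards. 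For the moment computation the interchange is unconditionally justified by Tonelli since the integrand is nonnegative; when the right-hand side $\int_0^\infty t^{\alpha/p}d\nu(t)$ is infinite the identity still holds in $[0,\infty]$, which incidentally records exactly when $\mu\in\mathcal P_\alpha(\R^d)$. One should also remark that in case (S1) the restriction $0<\alpha<p$ is forced by Proposition \ref{theoremE} (the densities $E_p$ have no finite moment of order $\ge p$ when $p<2$), so no stronger statement is available there.
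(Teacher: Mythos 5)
Your proposal is correct and follows essentially the same route as the paper: the paper's own proof is the one-line remark that the result follows from Proposition \ref{theoremE} by interchanging the order of integrations, which is precisely the mixture representation and Tonelli argument you carry out in detail. Your additional care about identifying $\mu$ as the mixture $\int_0^\infty (D_{t^{1/p}})_*\mu_p\,d\nu(t)$ at the level of measures (rather than insisting on a density, which would fail if $\nu$ charges $t=0$) is a sensible refinement of the same idea.
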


As illustrations, we give a few examples which are of great importance in the theory of probability and interpolations.

\medskip

\noindent
{\bf (a) Multi-dimensional Linnik distributions.}  If we consider
$$ d\nu(t) = \frac{1}{\Gamma(\beta)}\,e^{-t} t^{\beta-1} dt\qquad(\beta>0), $$
the Gamma density probability measure, then it is simple to find
$$\int_0^\infty t^{\alpha/p} d\nu(t) = \frac{\,\Gamma\left(\beta + \frac{\alpha}{p}\right)\,}{\Gamma(\beta)}$$
and the formula (\ref{Sh3}), (\ref{Sh4}) give the absolute moments of the corresponding probability measures
whose characteristic functions are easily evaluated
\begin{equation}\label{Sh5}
\phi_p(\xi) = \int_0^\infty e^{-t|\xi|^p} d\nu(t)
= \left(1 +|\xi|^p\right)^{-\beta}\qquad(\xi\in\R^d).
\end{equation}

In the case $\,d=1,\,$ the distributions determined by $\phi_p$ are known as
the Linnik distributions. If $X_p$ denotes the random variable obeying the Linnik distribution
with the characteristic function $\phi_p$, then (\ref{Sh3}) simplifies to
\begin{equation}\label{Sh6}
E\left(\left|X_p\right|^\alpha\right) = \frac{\,2^\alpha\,\Gamma\left(1-\frac\alpha p\right)\Gamma\left(\frac{\alpha +1}{2}\right)\Gamma\left(\beta + \frac\alpha p\right)\,}
{\sqrt{\pi}\,\Gamma\left(1-\frac\alpha 2\right)\Gamma(\beta)}
\end{equation}
for all $\,0\le\alpha<p<2\,$ and (\ref{Sh4}) simplifies to
\begin{equation}\label{Sh7}
E\left(\left|X_2\right|^\alpha\right) = \frac{\,2^\alpha\,\Gamma\left(\frac{\alpha +1}{2}\right)\Gamma\left(\beta + \frac\alpha 2\right)\,}
{\sqrt{\pi}\,\Gamma(\beta)}
\end{equation}
for all $\,0\le \alpha<\infty\,$ ({\it cf.} G.-D. Lin \cite{Lin1}, 1998).

\medskip

\noindent
{\bf (b) Laplace transforms and Mittag-Leffler distributions.}  Suppose $F$ is
a probability distribution function on $[0, \infty)$ with its Laplace transform
$$\mathcal{L} (F)(s) = \int_0^\infty e^{-sr} dF(r)\qquad(s\ge 0).$$
As a special case of Schoenberg classes with $\,p=1\,$ and $\,d\nu(r) = dF(r),\,$
the radial extension of $\mathcal{L}(F)$ to $\R^d$
gives rise to a characteristic function on $\R^d$ for any dimension $d$.
If $\,\widehat\mu(\xi) = \mathcal{L}(F)(|\xi|)\,$ and $\,\widehat{E_1}(\xi) = e^{-|\xi|}\,$ as defined in (\ref{SE1}),
then our reasonings show
\begin{equation}\label{Sh8}
\int_0^\infty r^\alpha dF(r) = \dfrac{\,\int_{\R^d} |v|^\alpha d\mu(v)\,}{\,\int_{\R^d} E_1(v)|v|^\alpha dv\,}\quad (d\ge 1).
\end{equation}

As an application, we consider the Mittag-Leffler distributions
\begin{equation}
F_\delta(r) = \sum_{n=1}^\infty \frac{(-1)^n}{\Gamma(1+n\delta)}\,r^{n\delta}\qquad(r\ge 0,\,\,0<\delta\le 1).
\end{equation}
By W. Feller (\cite{Fe}, 1966), the Laplace transforms of $F_\delta$ are calculated as
$$\mathcal{L}\left(F_\delta\right)(s) = \left(1+ s^\delta\right)^{-1}\quad(s\ge 0).$$
The radial extensions $\,\widehat\mu_\delta(\xi) = (1+ |\xi|^\delta)^{-1}\,$
are nothing but the characteristic functions (\ref{Sh5}) with $\,p=1,\,$ which correspond to the Linnik distributions when $\,d=1.$
By (\ref{E2}), (\ref{Sh6}) and (\ref{Sh8}) with $\,d=1,\,$ we obtain
\begin{equation}
\int_0^\infty r^\alpha dF_\delta(r) =
\frac{\,\Gamma\left( 1-\frac\alpha\delta\right)\Gamma\left(1+\frac\alpha\delta\right)\,}{\Gamma(1-\alpha)}
\end{equation}
for $\,0\le \alpha<\delta\le 1.\,$

The associated stochastic process $\,\left(X_t\right)_{t\ge 0}\,$
is known as the Mittag-Leffler process for which $X_t$ has the distribution
\begin{equation}
F_{\delta, t}(r) = \sum_{n=0}^\infty
\frac{(-1)^n\,\Gamma(n+t)}{\,\,\Gamma(1+ (n+t)\delta)\Gamma(t)\,n!}\,r^{(n +t)\delta\,\,}\qquad(r\ge 0,\, t>0)
\end{equation}
and $\,X_0 = 0.\,$ As its Laplace transform is given by $\,(1 + s^\delta)^{-t}\,,$ we obtain
\begin{equation}
\int_0^\infty r^\alpha dF_{\delta, t}(r) =
\frac{\,\Gamma\left( 1-\frac\alpha\delta\right)\Gamma\left(t+\frac\alpha\delta\right)\,}{\Gamma(1-\alpha)\Gamma(t)}
\quad(t>0)
\end{equation}
for $\,0\le \alpha<\delta\le 1,\,$ which follows by the same reasonings
(see \cite{Lin2}, \cite{Pil} for other methods of computations).

\section{Absolute moments of convolutions}
We recall that the convolution of two complex Borel measures $\,\mu, \nu\,$ on $\R^d$ is the unique
complex Borel measure $\mu\ast\nu$ such that
$$\int_{\R^d} f(x)\, d(\mu\ast\nu)(x) = \iint_{\R^d\times\R^d} f(x+y)\, d\mu(x) d\nu(y)$$
for every bounded continuous $f$ on $\R^d$ (see e.g. W. Rudin \cite{R}).

Our aim here is to investigate the absolute moments for the convolution of two probability measures.
In terms of Fourier transforms, the convolution is given by
$\,(\mu\ast\nu)\,\widehat{}\,(\xi) = \widehat\mu(\xi)\,\widehat\nu(\xi).\,$
In order to apply Theorem \ref{theoremM2}, we shall need the well-known Leibniz rule
\begin{equation}\label{C2}
\Delta_\xi^k(\phi\psi)(0) = \sum_{m=0}^k\binom km \Delta_\xi^m(\phi)(0)\,\Delta_\xi^{k-m} (\psi)(m\xi),
\end{equation}
where $\Delta_\xi^0$ is interpreted as the identity operator.

In contrast to the familiar regularity-gaining property of convolution,
it turns out that the convolution may not increase the order of finite absolute moments.
To be precise, we have the following theorem where $M_\mu(\beta)$ denotes
the absolute moment of $\mu$ of order $\beta$ as defined in (\ref{OC1}).

\medskip

\begin{theorem}\label{theoremC1}
Suppose $\,\mu\in\mathcal{P}_\alpha(\R^d),\,\nu\in\mathcal{P}_\beta(\R^d)\,$ with $\,\alpha,\,\beta>0.\,$
Then the convolution $\,\mu\ast\nu\in \mathcal{P}_\gamma(\R^d)\,$ with
$\,\gamma = \min\,(\alpha, \,\beta)\,$ and there exists a constant $\,C= C(\alpha, \beta, d)>0\,$ such that
\begin{equation}\label{C1}
M_{\mu\ast\nu}(\gamma) \,\le\,C\,\left\{\bigl[M_\mu(\alpha)\bigr]^{\gamma/\alpha} + \bigl[M_\nu(\beta)\bigr]^{\gamma/\beta}\right\}.
\end{equation}
\end{theorem}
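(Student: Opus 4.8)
The plan is to bypass the Fourier machinery for this particular statement and reduce it to two elementary facts: the submultiplicativity of $t\mapsto t^\gamma$ and the monotonicity of absolute moments (Lemma~\ref{lemmaOC}). Put $\gamma=\min(\alpha,\beta)$ and assume, without loss of generality, that $\gamma=\alpha\le\beta$. Since $\mu\ast\nu$ is the image of the product measure $\mu\otimes\nu$ under the addition map $(x,y)\mapsto x+y$, the defining identity $\int f\,d(\mu\ast\nu)=\iint f(x+y)\,d\mu(x)\,d\nu(y)$ for bounded continuous $f$ extends, by truncating $f$ at height $N$ and letting $N\to\infty$, to every nonnegative Borel function; applying this with $f(z)=|z|^\gamma$ yields
\[
M_{\mu\ast\nu}(\gamma)=\iint_{\R^d\times\R^d}|x+y|^\gamma\,d\mu(x)\,d\nu(y).
\]

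Next I would estimate the integrand pointwise. By the triangle inequality $|x+y|\le|x|+|y|$, together with concavity of $t\mapsto t^\gamma$ for $0<\gamma\le1$ and convexity for $\gamma\ge1$, one has $(a+b)^\gamma\le c_\gamma\,(a^\gamma+b^\gamma)$ for all $a,b\ge0$, where $c_\gamma=\max(1,2^{\gamma-1})$. Hence $|x+y|^\gamma\le c_\gamma\bigl(|x|^\gamma+|y|^\gamma\bigr)$, and integrating in $x$ and $y$ gives
\[
M_{\mu\ast\nu}(\gamma)\le c_\gamma\bigl(M_\mu(\gamma)+M_\nu(\gamma)\bigr).
\]
Since $\gamma\le\alpha$ and $\gamma\le\beta$, both terms on the right are finite, so $\mu\ast\nu\in\p_\gamma(\R^d)$. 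It then remains only to replace $M_\mu(\gamma)$ and $M_\nu(\gamma)$ by the quantities appearing in (\ref{C1}): the monotonicity estimate established in the proof of Lemma~\ref{lemmaOC}, namely $[M_\mu(\gamma)]^{1/\gamma}\le[M_\mu(\alpha)]^{1/\alpha}$ and $[M_\nu(\gamma)]^{1/\gamma}\le[M_\nu(\beta)]^{1/\beta}$, gives $M_\mu(\gamma)\le[M_\mu(\alpha)]^{\gamma/\alpha}$ and $M_\nu(\gamma)\le[M_\nu(\beta)]^{\gamma/\beta}$, and substituting produces (\ref{C1}) with $C=\max(1,2^{\gamma-1})$, a constant depending only on $\alpha$ and $\beta$. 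Along this route there is no serious obstacle; the only steps deserving a word are the case split at $\gamma=1$ in the power inequality and the (routine) extension of the convolution identity from bounded test functions to the unbounded $|z|^\gamma$.

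One could instead argue within the Fourier framework of the paper, via the Leibniz rule (\ref{C2}) and Theorem~\ref{theoremM2}: writing $\phi=\widehat\mu$, $\psi=\widehat\nu$ and expanding
\[
\Delta^k_\xi(\phi\psi)(0)=\sum_{m=0}^{k}\binom km\,\Delta^m_\xi(\phi)(0)\,\Delta^{k-m}_\xi(\psi)(m\xi)
\]
for an odd integer $k>\gamma$, one would bound each mixed term by H\"older's inequality applied to the splitting $|\xi|^{-d-\gamma}=|\xi|^{-(d+\gamma)m/k}\,|\xi|^{-(d+\gamma)(k-m)/k}$ together with the estimate (\ref{Pr2}), namely $\int_{\R^d}|e^{-i\xi\cdot v}-1|^k\,|\xi|^{-d-\gamma}\,d\xi\le C|v|^\gamma$; after integrating in the two probability variables and applying the monotonicity of moments and weighted AM--GM to the resulting products, one would reach $\int_{\R^d}\bigl|\Delta^k_\xi(\Re(\phi\psi))(0)\bigr|\,|\xi|^{-d-\gamma}\,d\xi\le C\bigl([M_\mu(\alpha)]^{\gamma/\alpha}+[M_\nu(\beta)]^{\gamma/\beta}\bigr)$ and then invoke the second half of Theorem~\ref{theoremM2} to conclude. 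The main obstacle on this route is bookkeeping rather than substance: Theorem~\ref{theoremM2} excludes even integers, so the case of even integer $\gamma$ would be deferred to the elementary inequality above anyway, and the borderline odd-integer exponents $\gamma$, where the individual differences $\Delta^j_\xi$ fail to vanish to the order needed for the cross terms, require extra care. For these reasons I would present the elementary argument as the proof and at most remark on the Fourier-analytic alternative.
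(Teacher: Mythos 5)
Your elementary argument is correct, but it takes a genuinely different route from the paper. The paper proves Theorem~\ref{theoremC1} entirely on the Fourier side: it sets $\phi=\widehat\mu$, $\psi=\widehat\nu$, $k=1+[\gamma]$, expands $\Delta_\xi^k(\phi\psi)(0)$ by the Leibniz rule (\ref{C2}), controls the cross terms by splitting $\gamma=\beta_m+\gamma_m$ with $\beta_m=m\gamma/k$, $\gamma_m=(k-m)\gamma/k$ (bounding one factor in $\|\cdot\|_{\beta_m,\,m}$ via Theorem~\ref{theoremM2}(i) and the other in sup-norm via Lemma~\ref{lemmaL1}), and then passes back to the measure by Theorem~\ref{theoremM2}(ii); since that converse direction excludes integral orders, the paper must treat integer $\gamma$ by a separate limiting argument using $[M_{\mu\ast\nu}(\gamma)]^{1/\gamma}=\sup_{0<\delta<\gamma}[M_{\mu\ast\nu}(\delta)]^{1/\delta}$. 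Your proof --- pushforward representation of $\mu\ast\nu$, the $c_r$-inequality $|x+y|^\gamma\le\max(1,2^{\gamma-1})(|x|^\gamma+|y|^\gamma)$, and the Lyapunov monotonicity (\ref{OC2}) --- is shorter, needs no case split on the integrality of $\gamma$, and yields the explicit constant $\max(1,2^{\gamma-1})$; each step (the monotone-convergence extension of the convolution identity to $|z|^\gamma$, the power inequality, the moment monotonicity) is sound. What the paper's longer route buys is a demonstration that the convolution estimate is reachable purely from the characterization of Fourier images in Theorem~\ref{theoremM2} together with the difference-operator Leibniz rule, which is the stated purpose of that section; as a proof of the theorem itself, your argument is the more economical one, and your closing remark correctly identifies where the Fourier route gets delicate (excluded integer orders, bookkeeping in the cross terms).
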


\medskip

\begin{remark} Since $\,\mu \ast\delta = \mu\,$ for any probability measure $\mu$
and $\delta$ has finite absolute moments of all orders,
Theorem \ref{theoremC1} can not be improved concerning the maximal order of finite absolute moments.
\end{remark}

\smallskip

\begin{proof} In view of the monotonicity (\ref{OC2}) of absolute moments, we note
\begin{equation}\label{CC2}
M_\mu(\gamma)\le \bigl[M_\mu(\alpha)\bigr]^{\gamma/\alpha}\quad\text{and}\quad M_\nu(\gamma) \le \bigl[M_\nu(\beta)\bigr]^{\gamma/\beta}.
\end{equation}

We first deal with the case when $\gamma$ is not an integer. Let
$\,\phi=\hat\mu,\,\psi=\hat\nu\,$ so that $\,(\mu\ast\nu)\,\widehat\,= \phi\psi\,$
and $\,k= 1+[\gamma].\,$ By the first part of Theorem \ref{theoremM2},
\begin{equation}\label{CC3}
\|\phi\|_{\gamma, \,k} \le C\, M_\mu(\gamma) \quad\text{and}\quad \|\psi\|_{\gamma, \,k} \le C\, M_\nu(\gamma)
\end{equation}
for some constant $\,C= C(\gamma, d)>0.\,$ Since  $\,\|\phi\|_\infty\le 1, \,\|\psi\|_\infty\le 1,\,$ it is plain to deduce from
the Leibniz rule of (\ref{C1}) the estimate
\begin{align}\label{C4}
\|\,\phi\psi\,\|_{\gamma, \,k} &\le \|\phi\|_{\gamma, \,k} + \|\psi\|_{\gamma, \,k} \nonumber\\
&+ \sum_{m=1}^{k-1}\binom km \int_{\R^d} \frac{\,\left|\Delta_\xi^m(\phi)(0)\right|\left|\Delta_\xi^{k-m}(\psi)(m\xi)\right|\,}{|\xi|^{d+\gamma}}\,d\xi\,.
\end{align}

Let us introduce the auxiliary parameters given as
$$\beta_m = \frac{m\gamma}{k},\quad \gamma_m = \frac{(k-m)\gamma}{k}\,,\quad m=1, \cdots, k-1.$$
Since $\,0<\beta_m<m,\, 0<\gamma_m<k-m,\,\beta_m +\gamma_m =\gamma,\,$ Theorem \ref{theoremM2} gives
\begin{align*}
\int_{\R^d} \frac{\left|\Delta_\xi^m(\phi)(0)\right|\left|\Delta_\xi^{k-m}(\psi)(m\xi)\right|}{|\xi|^{d+\gamma}}\,d\xi
&\le \|\phi\|_{\beta_m,\,m}\left[\sup_{\xi\in\R^d}\frac{\left|\Delta_\xi^{k-m} (\psi)(m\xi)\right|}{|\xi|^{\gamma_m}}\right]\\
&\le C\,2^{k-m}\,M_\mu(\beta_m)\, M_\nu(\gamma_m)\\
&\le C\,2^{k-m}\,\bigl[M_\mu(\gamma)\bigr]^{\frac{\beta_m}{\gamma}}\,\bigl[M_\nu(\gamma)\bigr]^{\frac{\gamma_m}{\gamma}},
\end{align*}
where $C$ is a constant depending only on $\gamma, d$ and the second inequality follows form
an obvious modification of the proof of Lemma \ref{lemmaL1}.

Combining with (\ref{CC2}), if $\,C= C(\gamma, d)\,$ denotes another constant which absorbs all of these estimates, the estimate (\ref{C1}) yields
\begin{align}
\|\,\phi\psi\,\|_{\gamma, \,k}
&\le C\, \sum_{m=0}^k\binom km \left[M_\mu(\gamma)\right]^{\frac{m}{k}} \left[M_\nu(\gamma)\right]^{\frac{k-m}{k}}\nonumber\\
&\le C\, \bigl\{ M_\mu(\gamma) + M_\nu(\gamma)\bigr\}.
\end{align}
By the second part of Theorem \ref{theoremM2}, we may conclude $\,\mu\ast\nu\in \mathcal{P}_\gamma(\R^d)\,$ and
the absolute moment inequality (\ref{C1}) follows from (\ref{CC2}) and (\ref{M24}).

Suppose now $\gamma$ is an integer. For each non-integral $\,0<\delta<\gamma,\,$
it follows from what verified in the above
$$ M_{\mu\ast\nu}(\delta) \le C\left\{[M_\mu(\gamma)]^{\delta/\gamma} +
[M_\nu(\gamma)]^{\delta/\gamma}\right\}\,.$$
In view of $\,
\left[M_{\mu\ast\nu}(\gamma)\right]^{1/\gamma} = \sup_{0<\delta<\gamma}\,\left[M_{\mu\ast\nu}(\delta)\right]^{1/\delta},
\,$ the result follows.
\end{proof}

\medskip

In the theory of probability and statistics, it is well-known that
if $\,X, Y\,$ are independent random variables with distributions $\,F, G,\,$ respectively,
then the sum $X+Y$ has the distribution $F\ast G$. In terms of random variables, hence,
Corollary \ref{corollaryC1} may be recast as the following for which
the case $\,d=1\,$ has been investigated by many authors (see e.g. \cite{Bahr2}, \cite{Ush}).

\medskip

\begin{corollary}\label{corollaryC1} For $\,\alpha>0\,$ and $\,\beta>0,\,$ put $\,\gamma = \min (\alpha, \,\beta).\,$
Suppose that $\,X, Y\,$ are independent random variables which take values in $\R^d$ and satisfy
$\,E\,|X|^\alpha <\infty\,$ and $\,E\,|Y|^\beta<\infty.\,$ Then $\,E\,|X+Y|^\gamma <\infty\,$ and
there exists a constant $\,C= C(\alpha, \beta, d)>0\,$ such that
\begin{equation}\label{C1}
E\,|X+Y|^\gamma \,\le\,C\,\left\{\bigl(E\,|X|^\alpha\bigr)^{\gamma/\alpha} + \bigl(E\,|Y|^\beta\bigr)^{\gamma/\beta}\right\}.
\end{equation}
\end{corollary}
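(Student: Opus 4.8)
The plan is to deduce the statement directly from Theorem \ref{theoremC1} by translating from random variables to their laws. First I would let $\mu$ and $\nu$ denote the distributions on $\R^d$ of $X$ and $Y$, so that by definition $E\,|X|^\alpha = \int_{\R^d}|v|^\alpha\,d\mu(v) = M_\mu(\alpha)$ and $E\,|Y|^\beta = \int_{\R^d}|v|^\beta\,d\nu(v) = M_\nu(\beta)$. The hypotheses $E\,|X|^\alpha<\infty$ and $E\,|Y|^\beta<\infty$ are then precisely the statements $\mu\in\mathcal{P}_\alpha(\R^d)$ and $\nu\in\mathcal{P}_\beta(\R^d)$.

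Next I would invoke the standard fact that, because $X$ and $Y$ are independent, the law of $X+Y$ is the convolution $\mu\ast\nu$: for any bounded continuous $f$ on $\R^d$, Fubini's theorem applied to the product law of $(X,Y)$ gives $E\,f(X+Y) = \iint_{\R^d\times\R^d} f(x+y)\,d\mu(x)d\nu(y)$, which is exactly the defining property of $\mu\ast\nu$ recalled at the beginning of this section. In particular $E\,|X+Y|^\gamma = M_{\mu\ast\nu}(\gamma)$ for every $\gamma>0$ (approximating $|\cdot|^\gamma$ from below by bounded continuous functions and using monotone convergence if one wishes to stay within the stated class of test functions).

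Finally I would apply Theorem \ref{theoremC1} to the pair $(\mu,\nu)$, which yields $\mu\ast\nu\in\mathcal{P}_\gamma(\R^d)$ with $\gamma=\min(\alpha,\beta)$ and $M_{\mu\ast\nu}(\gamma)\le C\bigl\{[M_\mu(\alpha)]^{\gamma/\alpha}+[M_\nu(\beta)]^{\gamma/\beta}\bigr\}$ for a constant $C=C(\alpha,\beta,d)$. Rewriting each absolute moment back in terms of expectations gives $E\,|X+Y|^\gamma<\infty$ together with the asserted inequality. There is essentially no obstacle here: the entire analytic content — the Leibniz-rule decomposition of $\Delta_\xi^k(\phi\psi)(0)$, the interpolation choice $\beta_m = m\gamma/k$, $\gamma_m=(k-m)\gamma/k$, and the passage through the integer case via order-continuity — has already been carried out in Theorem \ref{theoremC1}, so this corollary is purely a change of language from measures to random variables.
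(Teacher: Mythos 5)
Your proposal is correct and matches the paper's intended argument exactly: the paper itself presents this corollary as a direct restatement of Theorem \ref{theoremC1} via the standard fact that the law of a sum of independent random variables is the convolution of their laws. Your extra care in extending the identity $E\,f(X+Y)=\int f\,d(\mu\ast\nu)$ from bounded continuous $f$ to $|\cdot|^\gamma$ by monotone approximation is a welcome (if routine) detail that the paper leaves implicit.
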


\section{Heat-Diffusion equations}
For $\,0<p\le 2,\,$ we consider the heat-diffusion equation
\begin{equation}\label{H1}
\frac{\partial f }{\partial t}(x, t)  + \Lambda^p f (x, t) = 0\quad\text{for}\quad x\in\R^d,\,t>0,
\end{equation}
where $\Lambda^p$ denotes the differential operator defined
as $\, (\Lambda^p g)\,\widehat{}\,\,(\xi) = |\xi|^p \,\widehat g(\xi)\,$
for each integrable function $g$ on $\R^d$. From a probabilistic stand-point, it may be viewed
as the equation for the family of densities $(f(\cdot, t))_{t>0}$ associated with probability measures $\,(f_t)_{t>0}\,$
determined by $\,df_t(x) = f(x, t) dx.\,$

With an initial datum $\,f_0 =\mu\in\p(\R^d),\,$ we consider the Cauchy problem for (\ref{H1}).
In terms of characteristic functions, we reformulate
\begin{equation}
\left\{\aligned &{\frac{\partial \widehat f}{\partial t}(\xi, t)  + |\xi|^p\widehat{f} (\xi, t)
= 0\quad\text{for}\quad \xi\in\R^d, \,t>0,}\\
&\qquad\widehat{f}(\xi, 0) = \widehat\mu(\xi)\endaligned\right.
\end{equation}
for which an immediate solution is given by
\begin{equation}\label{H2}
\widehat{f}(\xi, t) = e^{-t|\xi|^p}\,\widehat\mu(\xi)\quad(\xi\in\R^d,\,t\ge 0).
\end{equation}
Let $E_p$ denote the density defined as in (\ref{SE1}). Upon considering
\begin{equation*}
E_p(x, t) = t^{-d/p} E_p\left(t^{-1/p} x\right)
\end{equation*}
for $\,t>0\,$, it is elementary to invert the Fourier expression of (\ref{H2}) as
\begin{equation}\label{H4}
f(x, t) = \int_{\R^d} E_p(x-y, t) d\mu(y)\quad (x\in\R^d, \,t>0).
\end{equation}

We remark that $E_p(x, t)$ is nothing but
the fundamental solution of (\ref{H1}), that is, the solution corresponding to the initial datum
$\,\mu = \delta.\,$ We also point out that $f$
is in fact the unique solution of (\ref{H1}) if we look for a solution, for example, in the space of continuous
functions $u$ on $\R^d\times(0, \infty)$ such that $\,\sup_{t>0} \|u(\cdot, t)\|_{L^1} <\infty\,$
(see Giga et. al. \cite{GGS}, 2010).

Our purpose here is to study the moment propagation and the asymptotic behavior of solutions
in time by applying our characterization theorems for the absolute moments and Fourier-based probability metrics.

\subsection{Moment propagation in time}
\medskip

\begin{proposition} For $\,0<p\le 2,\,$ let $f$ be the solution
(\ref{H4}) of the Cauchy problem (\ref{H1}) with $\,\mu\in\mathcal{P}_\alpha(\R^d)\,$ and
$\,df_t(x) = f(x, t) dx\,$ for each $\,t>0.$
\begin{itemize}
\item[\rm{(i)}] If $\,0<\alpha<p<2,\,$ then $\,f_t\in\mathcal{P}_\alpha(\R^d)\,$ for each $\,t>0\,$
and there exists a constant $\,C= C(\alpha, p, d)>0\,$ such that
\begin{equation}\label{H5}
\int_{\R^d} |x|^\alpha df_t(x) \le C (1 +t)^{\alpha/p}\int_{\R^d}|x|^\alpha d\mu(x).
\end{equation}
In the case when $\,p=2,\,$ the same holds for any $\,\alpha>0.$
\item[\rm{(ii)}] If $\,0<p<2\,$ and $\,\alpha\ge p,\,$ then $\,f_t\in\mathcal{P}_\beta(\R^d)\,$ for each
$\,0<\beta<p\,$ and $\,t>0\,$
and there exists a constant $\,C= C(\beta, p, d)>0\,$ such that
\begin{equation}\label{H5-2}
\int_{\R^d}|x|^\beta df_t(x) \le C (1 +t)^{\beta/p}\left(\int_{\R^d}|x|^\alpha d\mu(x)\right)^{\beta/\alpha}.
\end{equation}
\end{itemize}
\end{proposition}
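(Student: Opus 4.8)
The plan is to realize each $f_t$ as a convolution and then read off the moment bounds from the convolution theorem (Theorem~\ref{theoremC1}) together with the explicit computations of Proposition~\ref{theoremE}. First I would record that, by the inversion formula (\ref{H4}) (equivalently from (\ref{H2}), since the kernel has characteristic function $e^{-t|\xi|^p}$), one has $f_t = (\mu_p)_t \ast \mu$, where $(\mu_p)_t$ denotes the probability measure with density $E_p(x,t) = t^{-d/p}E_p(t^{-1/p}x)$; that $e^{-t|\xi|^p}$ is indeed a characteristic function follows from Schoenberg's theorem and the invariance of positive definiteness under $\xi\mapsto t^{1/p}\xi$, so $E_p(\cdot,t)\ge 0$ and $(\mu_p)_t\in\mathcal{P}(\R^d)$, whence $f_t\in\mathcal{P}(\R^d)$ for every $t>0$. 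A change of variables $x=t^{1/p}y$ then gives the scaling identity
$$\int_{\R^d}|x|^\beta E_p(x,t)\,dx = t^{\beta/p}\int_{\R^d}|y|^\beta E_p(y)\,dy,$$
so by Proposition~\ref{theoremE} one has $(\mu_p)_t\in\mathcal{P}_\beta(\R^d)$ precisely for $0\le\beta<p$ when $0<p<2$ and for all $\beta\ge 0$ when $p=2$, with $M_{(\mu_p)_t}(\beta)=t^{\beta/p}M_{\mu_p}(\beta)$ and $M_{\mu_p}(\beta)$ given explicitly by (\ref{E1}) or (\ref{E3}).

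For part~(i), with $0<\alpha<p<2$ (or any $\alpha>0$ when $p=2$) both factors $\mu$ and $(\mu_p)_t$ lie in $\mathcal{P}_\alpha(\R^d)$, so Theorem~\ref{theoremC1} applied with $\gamma=\min(\alpha,\alpha)=\alpha$ gives $f_t\in\mathcal{P}_\alpha(\R^d)$ and
$$M_{f_t}(\alpha)\le C\bigl\{M_\mu(\alpha)+M_{(\mu_p)_t}(\alpha)\bigr\}=C\bigl\{M_\mu(\alpha)+t^{\alpha/p}M_{\mu_p}(\alpha)\bigr\};$$
absorbing the kernel moment into the constant and using $1,\,t^{\alpha/p}\le(1+t)^{\alpha/p}$ yields the asserted estimate (\ref{H5}). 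For part~(ii), with $0<p<2$, $\alpha\ge p$ and a fixed $\beta$ satisfying $0<\beta<p$, I would apply Theorem~\ref{theoremC1} to $\mu\in\mathcal{P}_\alpha(\R^d)$ and $(\mu_p)_t\in\mathcal{P}_\beta(\R^d)$; since $\beta<p\le\alpha$ the relevant exponent is $\gamma=\min(\alpha,\beta)=\beta$, so $f_t\in\mathcal{P}_\beta(\R^d)$ and
$$M_{f_t}(\beta)\le C\bigl\{[M_\mu(\alpha)]^{\beta/\alpha}+M_{(\mu_p)_t}(\beta)\bigr\}=C\bigl\{[M_\mu(\alpha)]^{\beta/\alpha}+t^{\beta/p}M_{\mu_p}(\beta)\bigr\},$$
and the same simplification gives (\ref{H5-2}).

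I do not expect a deep obstacle here: the argument is essentially bookkeeping around the convolution theorem. The two points deserving attention are, first, that the admissible ranges of $\alpha$ and $\beta$ in the conclusions are forced by the moment behaviour of the kernel $E_p(\cdot,t)$ itself---cf.\ the asymptotics (\ref{SE4}), which is exactly why in part~(ii) one can only assert $f_t\in\mathcal{P}_\beta(\R^d)$ for $\beta<p$---and, second, carefully tracking the power of $t$ through the scaling identity, which is the sole source of the factors $(1+t)^{\alpha/p}$ and $(1+t)^{\beta/p}$. Alternatively, one could bypass Theorem~\ref{theoremC1} entirely and argue directly from $\widehat{f_t}(\xi)=e^{-t|\xi|^p}\widehat\mu(\xi)$ via the Leibniz rule (\ref{C2}) and the two-sided estimates of Theorem~\ref{theoremM2}, splitting the order $\alpha$ between the two factors exactly as in the proof of Theorem~\ref{theoremC1}; this reproduces the same bounds at greater length, so I would favour the convolution-theorem route.
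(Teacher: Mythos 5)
Your argument is exactly the paper's: write $f_t = E_p(\cdot,t)\ast\mu$, invoke Theorem \ref{theoremC1} for the convolution, Proposition \ref{theoremE} for the moments of $E_p$, and the scaling identity $\int_{\R^d}E_p(x,t)|x|^\beta\,dx = t^{\beta/p}\int_{\R^d}E_p(x)|x|^\beta\,dx$; the paper's proof is precisely this one-line reduction. The only soft spot --- that the convolution theorem yields an \emph{additive} bound $C\{M_\mu(\alpha)+t^{\alpha/p}M_{\mu_p}(\alpha)\}$, which does not literally collapse to the multiplicative form $C(1+t)^{\alpha/p}M_\mu(\alpha)$ when $M_\mu(\alpha)$ is small --- is present in the paper's own proof to exactly the same degree, so your proposal matches it.
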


\smallskip

\begin{proof}
Since $\,f(x, t) = \left[E_p(\cdot, t)\ast \mu\right](x)\,$ for $\,t>0,\,$ the result is an immediate consequence of
Theorem \ref{theoremC1} for the absolute moments of convolutions and Proposition \ref{theoremE} for the absolute moments of $E_p$
once we observe
\begin{align*}
\int_{\R^d} E_p(x, t) |x|^\alpha\,dx &= t^{\alpha/p}\int_{\R^d} E_p(x) |x|^\alpha\,dx\quad(t>0).
\end{align*}
\end{proof}

\subsection{Asymptotic behavior for large time}
We denote by $C_0^\infty(\R^d)$ the class of infinitely differentiable functions $g$ on $\R^d$ such that $g$ and all of its
partial derivatives vanish at infinity in the sense
$\,\left|\partial^\sigma g(x)\right|\to 0\,$ as $\,|x|\to\infty\,$ for every multi-index $\sigma$.

Let $f$ be the solution (\ref{H4}) of (\ref{H1}) with $\,\mu\in\mathcal{P}(\R^d).$
Since $\,|\xi|^m \widehat f(\xi, t)\,$ is integrable for any integer $m$, it follows from the Fourier inversion theorem and
the Riemann-Lebesgue lemma that
$\,f(\cdot, t)\in C^\infty_0(\R^d)\,$ and
\begin{align}\label{H6}
\left|\bigl(\partial^\sigma f\bigr)(x, t)\right|
&\le (2\pi)^{-d}\int_{\R^d} e^{-t|\xi|^p}|\xi|^{|\sigma|}\left|\widehat\mu(\xi)\right|\,d\xi\nonumber\\
& \le C_\sigma\, t^{-(d+|\sigma|)/p}\left\|\widehat\mu\right\|_\infty
\end{align}
uniformly in $x$ for every multi-index $\sigma$, where
$$C_\sigma = \frac{2\,\Gamma\left(\frac{d+|\sigma|}{p}\right)}{p\,(4\pi)^{d/2}\,\Gamma\left(\frac d2\right)}.
$$

Suppose $\,f, g\,$ are solutions of (\ref{H1}) with the initial data $\,\mu, \nu\in  \mathcal{P}(\R^d).\,$
By linearity, the estimate (\ref{H6}) yields
\begin{equation}\label{H7}
\bigl\|\,\partial^\sigma( f-g)(\cdot, t)\,\bigr\|_\infty
\le C_\sigma\,t^{-(d+|\sigma|)/p}\left\|\widehat\mu-\widehat\nu\right\|_\infty
\end{equation}
for every $\sigma$ and $\,t>0.\,$ From a functional analysis view-point, with the obvious topology on
$C^\infty_0(\R^d)$, it means the solution map $\,\mu\mapsto E_p(\cdot, t)\ast \mu\,$ is continuous from
$\mathcal{P}(\R^d)$ into $C^\infty_0(\R^d)$ and contractive for sufficiently large $t$.

Another interpretation is that any two solutions corresponding to different initial data
are essentially same as $\,t\to\infty\,$ in that the maximum distance between them or
between their derivatives tends to zero. In particular, if we consider the fundamental solution $E_p(x, t)$,
then it says the solution $f$ behaves asymptotically like $E_p(x, t)$ with
\begin{equation}\label{H8}
\bigl\|\,\partial^\sigma\left( f- E_p\right)(\cdot, t)\,\bigr\|_\infty = O\left(\,t^{-(d+|\sigma|)/p}\,\right)
\end{equation}
for every multi-index $\sigma$ and $\,t>0.\,$

If the initial data possess finite absolute moments, we may reformulate
the continuity (\ref{H7}) in terms of
the probability metric, for instance,
\begin{equation}
\rho_\alpha(\mu, \,\nu) = \int_{\R^d} \frac{\left|\widehat\mu(\xi) - \widehat\nu(\xi)\right|}{|\xi|^{d+\alpha}}\,d\xi
\end{equation}
and improve the convergence rate of (\ref{H8}) as follows.

\medskip

\begin{proposition} Suppose $\,\mu, \,\nu\in\mathcal{P}_\alpha(\R^d)\,$ with $\,0<\alpha<1.\,$
If $\,f, g\,$ are solutions of (\ref{H1}) with the initial data $\,\mu, \nu,$
respectively, then
\begin{equation}\label{H9}
\bigl\|\,\partial^\sigma( f-g)(\cdot, t)\,\bigr\|_\infty \le A_\sigma \, t^{-(\alpha + d+|\sigma|)/p}\,\rho_\alpha\left(\mu, \nu\right)
\end{equation}
for every multi-index $\sigma$ and $\,t>0,\,$ where $A_\sigma$ denotes the optimal bound
$$ A_\sigma = (2\pi)^{-d}\left(\frac{\alpha +d +|\sigma|}{ep}\right)^{\frac{\alpha +d +|\sigma|}{p}}.$$
\end{proposition}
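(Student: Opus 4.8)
The plan is to reduce everything to a single one–variable maximisation of $s\mapsto s^{q}e^{-ts^{p}}$. First I would use the solution formula (\ref{H2}): by linearity $\widehat f(\xi,t)-\widehat g(\xi,t)=e^{-t|\xi|^{p}}\bigl(\widehat\mu(\xi)-\widehat\nu(\xi)\bigr)$, and since $|\widehat\mu-\widehat\nu|\le 2$ the integrand $|\xi|^{|\sigma|}e^{-t|\xi|^{p}}\bigl|\widehat\mu(\xi)-\widehat\nu(\xi)\bigr|$ is integrable for each fixed $t>0$. Hence the Fourier–inversion computation that produced (\ref{H6}) applies verbatim to the difference $f-g$, giving
\[
\bigl(\partial^{\sigma}(f-g)\bigr)(x,t)=(2\pi)^{-d}\int_{\R^{d}}e^{ix\cdot\xi}\,(i\xi)^{\sigma}\,e^{-t|\xi|^{p}}\bigl(\widehat\mu(\xi)-\widehat\nu(\xi)\bigr)\,d\xi .
\]
Taking absolute values and inserting the weight $|\xi|^{d+\alpha}$ artificially, that is, writing $|\xi|^{|\sigma|}e^{-t|\xi|^{p}}=\bigl(|\xi|^{\alpha+d+|\sigma|}e^{-t|\xi|^{p}}\bigr)\,|\xi|^{-d-\alpha}$, I obtain
\[
\bigl|\partial^{\sigma}(f-g)(x,t)\bigr|\le(2\pi)^{-d}\Bigl(\sup_{\xi\in\R^{d}}|\xi|^{\alpha+d+|\sigma|}e^{-t|\xi|^{p}}\Bigr)\int_{\R^{d}}\frac{\bigl|\widehat\mu(\xi)-\widehat\nu(\xi)\bigr|}{|\xi|^{d+\alpha}}\,d\xi .
\]

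The last integral is precisely $\rho_{\alpha}(\mu,\nu)$, and it is finite: since $0<\alpha<1$, Theorem \ref{theoremM2} applied with $k=1$ gives $\widehat\mu,\widehat\nu\in\Phi^{\,\alpha}_{1}(\R^{d})$, and because $\Delta_{\xi}(\widehat\mu)(0)=\widehat\mu(\xi)-1$ one has $\rho_{\alpha}(\mu,\nu)\le\|\widehat\mu\|_{\alpha,\,1}+\|\widehat\nu\|_{\alpha,\,1}<\infty$. For the supremum I would set $q=\alpha+d+|\sigma|$ and $s=|\xi|$ and differentiate $h(s)=s^{q}e^{-ts^{p}}$: the unique critical point on $(0,\infty)$ is $s=(q/(pt))^{1/p}$, where $h$ attains its maximum value $(q/(ep))^{q/p}\,t^{-q/p}$. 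Substituting this back yields exactly $A_{\sigma}=(2\pi)^{-d}\bigl((\alpha+d+|\sigma|)/(ep)\bigr)^{(\alpha+d+|\sigma|)/p}$ together with the bound (\ref{H9}); nothing here is more than routine calculus combined with the cited characterisation of $\mathcal{P}_{\alpha}(\R^{d})$.

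The remaining assertion — that $A_{\sigma}$ is the \emph{best} possible constant — is where the real work lies, and it is the part I expect to be the main obstacle. Both inequalities used above are sharp only in a limit: the triangle inequality for the inversion integral is an equality exactly when the integrand has constant phase, and the weight–splitting is an equality exactly when $|\widehat\mu-\widehat\nu|$ concentrates on the critical sphere $|\xi|=(q/(pt))^{1/p}$. To realise this I would, after rescaling to $t=1$, take $\mu,\nu$ to be absolutely continuous with densities $\rho_{0}\pm\tfrac12\cos(s_{0}\,\omega\cdot x)\,w(x)$, where $\omega$ is a fixed unit vector, $s_{0}=(q/p)^{1/p}$, $\rho_{0}$ a fixed probability density dominating $\tfrac12|w|$, and $w$ a smooth bump with nonnegative Fourier transform which, after the scaling $w(x)\mapsto w(x/L)$, concentrates its Fourier mass at the origin as $L\to\infty$; then $\widehat\mu-\widehat\nu$ concentrates on $|\xi|=s_{0}$, and choosing $x$ so that $e^{ix\cdot\xi}(i\xi)^{\sigma}$ is aligned there drives the ratio of the left side of (\ref{H9}) to $\rho_{\alpha}(\mu,\nu)$ up to $A_{\sigma}$. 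The delicate point is to carry out this construction while keeping the competitors genuine probability measures and simultaneously controlling the error in the two limiting equalities.
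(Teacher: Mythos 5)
Your derivation of the inequality is exactly the paper's proof: Fourier inversion applied to $f-g$, factoring the integrand as $\bigl(|\xi|^{\alpha+d+|\sigma|}e^{-t|\xi|^{p}}\bigr)|\xi|^{-d-\alpha}$, pulling out the supremum to expose $\rho_\alpha(\mu,\nu)$, and evaluating $\max_{s>0}s^{q}e^{-ts^{p}}=(q/(ep))^{q/p}t^{-q/p}$ by calculus. The one place you diverge is in reading ``optimal bound'' as a sharpness assertion about (\ref{H9}) itself: the paper intends only that $A_\sigma$ is the exact value of $(2\pi)^{-d}\sup_{\xi}|\xi|^{\alpha+d+|\sigma|}e^{-t|\xi|^{p}}$ (rather than a cruder estimate of it), and its proof ends with that evaluation, so the extremizing construction you sketch in your last paragraph is not required --- which is fortunate, since achieving equality simultaneously in the phase alignment and in the concentration on the critical sphere is problematic for $d\ge 2$ and $|\sigma|\ge 1$ because $|\xi^\sigma|<|\xi|^{|\sigma|}$ off a measure-zero set of directions.
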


\smallskip
\begin{proof}
For $\,x\in\R^d\,$ and $\,t>0,\,$ the Fourier inversion theorem gives
\begin{align*}
\bigl|\partial^\sigma (f-g)(x, t)\bigr| &= (2\pi)^{-d}\left|\int_{\R^d}
e^{i x\cdot\xi}(i\xi)^\sigma\left[\hat f(\xi, t) - \hat g(\xi, t)\right]\,d\xi\right|\\
&\le (2\pi)^{-d}\int_{\R^d}|\xi|^{|\sigma|}e^{-t|\xi|^p}\left|\widehat\mu(\xi)-\widehat\nu(\xi)\right|\,d\xi\\
&\le (2\pi)^{-d}\max_{\xi\in\R^d}\biggl(|\xi|^{\alpha + d+|\sigma|}e^{-t|\xi|^p}\biggr)
\rho_\alpha(\mu, \nu)
\end{align*}
and the result follows upon evaluating the maximum by calculus.
\end{proof}

\medskip
\begin{remark}
Our work on this continuity or stability property of solutions
are mainly motivated by that of T. Goudon, S. Junca and G. Toscani (\cite{GJT}, 2002)
where they obtained an analogous result for $f, g$ without considering derivatives 
in the $L^2$ setting by using
the probability metric $d_\alpha$ instead of $\rho_\alpha$.

In addition, we point out the following consequences.
\begin{itemize}
\item[(a)] Since it is evident
\begin{align*}
\rho_\alpha\left(\mu, \nu\right) \le C\left[ M_\mu(\alpha) + M_\nu(\alpha)\right],
\end{align*}
(\ref{H9}) gives an improvement of (\ref{H8}) in the form
\begin{equation}\label{H10}
\bigl\|\,\partial^\sigma\left( f- E_p\right)(\cdot, t)\,\bigr\|_\infty = O\biggl(\,M_\mu(\alpha)\,t^{-\frac{(\alpha + d+|\sigma|)}{p}}\,\biggr)
\end{equation}
for every multi-index $\sigma$ and $\,t>0.\,$
\item[(b)] We emphasize that the choice of the fundamental solution $E_p(x, t)$
is only for convenience. Alternatively, we may choose any other fixed solution
if it would be better suited in understanding the behavior of solutions
for large time. For example, in the case of the heat equation, that is, $\,p=2,\,$ we may take the centered-Gaussian
$$G_b(x, t) = (4\pi)^{-d/2} \,e^{-|x-b|^2/4t}$$
with any $\,b\in\R^d,\,$ the solution corresponding to $\delta_b$, for which
\begin{equation}\label{H11}
\bigl\|\,\partial^\sigma\left( f- G_b\right)(\cdot, t)\,\bigr\|_\infty =
O\biggl(\,\bigl[M_\mu(\alpha) +|b|^\alpha\bigr]\,t^{-\frac{(\alpha + d+|\sigma|)}{2}}\,\biggr)
\end{equation}
holds for every multi-index $\sigma$ and $\,t>0.\,$
\end{itemize}
\end{remark}

\subsection{Asymptotic behavior for small time}
\begin{proposition} For $\,0<p\le 2,\,$ let $\,0<\alpha<\min (1, p)\,$ when $\,0<p<2\,$ and
$\,0<\alpha<1\,$ when $\,p=2.\,$ Suppose that $f$ is the solution
(\ref{H4}) of the Cauchy problem (\ref{H1}) corresponding to the initial data $\,\mu\in\mathcal{P}_\alpha(\R^d)\,$ and
$\,df_t(x) = f(x, t) dx\,$ for each $\,t>0.$
Then
\begin{align}
\rho_\alpha\bigl(f_t, \,\mu\bigr) \le \left[\frac{2\pi^{d/2}\,\Gamma(1-\alpha/p)}{\alpha\,\Gamma(d/2)}\right] t^{\alpha/p}
\left\|\widehat\mu\right\|_\infty
\end{align}
for each $\,t\ge 0.\,$ As a consequence, $\,f_t \to \mu\,$ as $\,t\to 0+\,$ in the sense
\begin{equation}
\lim_{t\to \,0+}\,\rho_\alpha\bigl(f_t, \,\mu\bigr) = 0.
\end{equation}
\end{proposition}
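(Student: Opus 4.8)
The plan is to compute $\rho_\alpha(f_t,\mu)$ essentially in closed form, exploiting the fact that on the Fourier side the difference of the two characteristic functions factors cleanly. First I would use the representation $\widehat{f_t}(\xi) = e^{-t|\xi|^p}\widehat\mu(\xi)$ from (\ref{H2}) to write
$$\widehat{f_t}(\xi) - \widehat\mu(\xi) = \bigl(e^{-t|\xi|^p}-1\bigr)\,\widehat\mu(\xi),$$
so that $|\widehat{f_t}(\xi)-\widehat\mu(\xi)| \le \|\widehat\mu\|_\infty\bigl(1 - e^{-t|\xi|^p}\bigr)$ for all $\xi\in\R^d$ and $t\ge 0$. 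Substituting into the definition of $\rho_\alpha$ gives
$$\rho_\alpha(f_t,\mu) \le \|\widehat\mu\|_\infty \int_{\R^d} \frac{1 - e^{-t|\xi|^p}}{|\xi|^{d+\alpha}}\,d\xi,$$
which reduces everything to evaluating the radial integral on the right; in particular its finiteness will re-prove, as a by-product, that $\rho_\alpha(f_t,\mu)<\infty$.

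Second, I would pass to polar coordinates to rewrite the integral as $\left|\s^{d-1}\right|\int_0^\infty r^{-1-\alpha}\bigl(1 - e^{-tr^p}\bigr)\,dr$ and then scale out the time by the change of variables $u = t^{1/p}r$, which produces the factor $t^{\alpha/p}$ and leaves the $t$-free integral $\int_0^\infty u^{-1-\alpha}\bigl(1-e^{-u^p}\bigr)\,du$. This last integral I would evaluate by a single integration by parts: the boundary contributions vanish because $1 - e^{-u^p} = O(u^p)$ near the origin (this is exactly where the hypothesis $\alpha<p$ is used) and $u^{-\alpha}\to 0$ at infinity, and what survives is $\tfrac{p}{\alpha}\int_0^\infty u^{p-1-\alpha}e^{-u^p}\,du = \tfrac{1}{\alpha}\,\Gamma(1-\alpha/p)$ after the substitution $v = u^p$. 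Combining this with $\left|\s^{d-1}\right| = 2\pi^{d/2}/\Gamma(d/2)$ yields precisely the stated constant $\bigl[2\pi^{d/2}\Gamma(1-\alpha/p)/(\alpha\,\Gamma(d/2))\bigr]$.

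Finally, since $\alpha/p>0$, letting $t\to 0+$ in the resulting inequality forces $\rho_\alpha(f_t,\mu)\to 0$, which is the asserted convergence. I do not expect a genuine obstacle here; the only points requiring care are the convergence of the auxiliary one-dimensional integral, which dictates the restriction $0<\alpha<\min(1,p)$ (the constraint $\alpha<1$ additionally guarantees, via the $k=1$ case of Theorem \ref{theoremM2}, that $\rho_\alpha$ is finite on $\mathcal{P}_\alpha(\R^d)$ so the statement is not vacuous), and the bookkeeping needed to reassemble the gamma-function constant. The case $p=2$ is included without change once one notes that the same computation goes through whenever $\alpha<p$.
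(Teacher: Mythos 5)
Your proposal is correct and follows essentially the same route as the paper: the paper likewise bounds $\rho_\alpha(f_t,\mu)$ by $\left\|\widehat\mu\right\|_\infty\int_{\R^d}(1-e^{-t|\xi|^p})|\xi|^{-d-\alpha}\,d\xi$ and then states that "the result follows by direct calculus or by applying Proposition 8.1," and your polar-coordinate scaling plus integration by parts is exactly that direct calculus (matching the computation the paper itself performs in (8.11)). Your constant $2\pi^{d/2}\Gamma(1-\alpha/p)/(\alpha\,\Gamma(d/2))$ and the use of $\alpha<p$ at the $u=0$ boundary term both check out.
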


\smallskip

\begin{proof}
By the definition of the metric $\rho_\alpha$, it is plain to observe that
\begin{align*}
\rho_\alpha\bigl(f_t, \,\mu\bigr) \le \left\|\widehat\mu\right\|_\infty \int_{\R^d}\frac{\left(1 - e^{-t|\xi|^p}\right)}{|\xi|^{d+\alpha}}\,d\xi
\end{align*}
and the result follows by direct calculus or by applying Proposition \ref{theoremE}.
\end{proof}

\vskip1cm

\noindent
{\bf{Acknowledgements.}} We are grateful to Professor Yoshinori Morimoto of Kyoto University
for his warm encouragement and keen interest on this work. In particular, we thank
for his informing us of the recent joint work \cite{MWY2} with S. Wang and T. Yang
on an extension of their previous results about the Fourier images and probability metrics.
Although their method and approach are different from ours, their work gave us many invaluable insights.

This research was supported by National Research Foundation of Korea Grant
funded by the Korean Government (\# 20150301).

\newpage


\begin{thebibliography}{12}

\bibitem[1]{Bag1} R. Bagby,
\emph{On $\,L^p, L^q\,$ multipliers of Fourier transforms},
Pacific J. Math. 68, pp. 1--12 (1977)

\bibitem[2]{Bag2} R. Bagby,
\emph{A characterization of Riesz potentials and an inversion formula},
Indiana Univ. Math. J. 29, pp. 581--595 (1980)

\bibitem[3]{BG} R. Blumenthal and R. Getoor,
\emph{Some theorems on stable processes},
Trans. Amer. Math. Soc. 95, pp. 263--273 (1960)



\bibitem[4]{Boc1} S. Bochner,
\emph{Vorlesungen \"uber Fouriersche Integrale},
 Akademischer Verlagsgesellschaft, Leipzig (1932)

\bibitem[5]{Boc2} S. Bochner,
\emph{Monotone funktionen, Stieltjes integrale und harmonische analyse},
Math. Ann. 108, pp. 378--410 (1933)

\bibitem[6]{Bog} V. I. Bogachev,
\emph{Measure Theory},
Vol. I, Springer-Verlag, Berlin, Heidelberg (2007)



\bibitem[7]{Brow} B. M. Brown,
\emph{Formulae for absolute moments},
J. Austral. Math. Soc. 13, pp. 104-106 (1972)

\bibitem[8]{CK} M. Cannone and G. Karch,
\emph{Infinite energy solutions to the homogeneous Boltzmann equation}, Comm. Pure Appl. Math. 63, pp. 747--778 (2010)


\bibitem[9]{CGT} E. A. Carlen, E. Gabetta and G. Toscani,
\emph{Propagation of smoothness and the rate of exponential convergenec to equilibrium
for a spatially homogeneous Maxwellian gas},
Comm. Math. Phys. 305, pp. 521--546 (1999)

\bibitem[10]{CT} J. A. Carrillo and G. Toscani,
\emph{Contractive probability metrics and asymptotic behavior of dissipative kinetic equations},
Riv. Mat. Univ. Parma 6, pp. 75--198 (2007)

\bibitem[11]{Cho1} Y.-K. Cho,
\emph{On the Boltzmann equation with the symmetric stable Levy process},
Kinetic Related Models 8, pp. 53--77 (2015)

\bibitem[12]{Cho2} Y.-K. Cho,
\emph{Atomic decomposition on Hardy-Sobolev spaces},
Studia Math. 177, pp. 25--41 (2006)

\bibitem[13]{Erd} A. Erd\'elyi,
\emph{Tables of Integral Transforms},
Vol. I, Caltech Bateman Manuscript Project, McGraw-Hill (1954)

\bibitem[14]{Bahr2} C. G. Esseen and B. von Bahr,
\emph{Inequalities for the rth absolute moment of a sum of random variables},
Ann. Math. Stat. 36, pp. 299--303 (1965)



\bibitem[15]{Fe} W. Feller,
\emph{An Introduction to Probability Theory and Its Applications},
Vol. II, Wiley, New York (1966)

\bibitem[16]{GTW} E. Gabetta, G. Toscani and B. Wennberg,
 \emph{Metrics for probability distributions and the trend to equilibrium for solutions of the Boltzmann equation},
 J. Stat. Physics 81, pp. 901--934 (1995)

\bibitem[17]{GES} A. L. Gibbs and F. Edward Su,
\emph{On choosing and bounding probability metrics},
International Stat. Review 70, pp. 419--435 (2002)




\bibitem[18]{GGS} M.-H. Giga, Y. Giga and J. Saal,
\emph{Nonlinear Partial Differential Equations: Asymptotic behavior of solutions and self-similar solutions},
Progress in Nonlinear Partial Differential Equations 79, Birkh\"auser (2010)


\bibitem[19]{GMO} L. Golinskii, M. Malamud and L. Oridoroga,
\emph{Scheonberg matrices of radial positive functions and Riesz sequences of translates in
$L^2(\R^n)$},
J. Fourier Anal. Appl., Online (2015)

\bibitem[20]{GJT} T. Goudon, S. Junca and G. Toscani,
\emph{Fourier-based distances and Berry-Esseen like inequalities for smooth densities},
Monatsh. Math. 135, pp. 115--136 (2002)

\bibitem[21]{Hsu} P. L. Hsu,
\emph{Absolute moments and characteristic functions},
J. Chiinese Math. Soc. 1, pp. 259--280 (1951)

\bibitem[22]{K} L. V. Kantorovich,
\emph{On one effective method of solving certain classes of extremal problems},
Dokl. Akad. Nauk USSR 28, pp. 212--215 (1940)


\bibitem[23]{Lin1} G.-D. Lin,
\emph{A note on the Linnik distributions},
J. Math. Anal. Appl. 217, pp. 701-706 (1998)


\bibitem[24]{Lin2} G.-D. Lin,
\emph{On the Mittag-Leffler distributions},
J. Stat. Planning Inference 74, pp. 1--9 (1998)





\bibitem[25]{LT} P.-L. Lions and G. Toscani,
\emph{A strengthened central limit theorems for smooth densities},
J. Funct. Anal. 129, pp. 148--167 (1995)


\bibitem[26]{MP} M. Matsui and Z. Pawlas,
\emph{Fractional absolute moments of heavy tailed distributions},
arXiv: 1301.4804v2 (2014)


\bibitem[27]{M} Y. Morimoto,
\emph{A remark on Cannone-Karch solutions to the homogeneous Boltzmann equation for Maxwellian molecules},
Kinetic and Related Models 5, pp. 551--561 (2012)

\bibitem[28]{MWY1} Y. Morimoto, S. Wang and T. Yang,
\emph{A new characterization and global regularity of infinite energy solutions to the homogeneous Boltzmann equation},
J. Math. Pures Appl. 103, pp. 809--829 (2015)

\bibitem[29]{MWY2} Y. Morimoto, S. Wang and T. Yang,
\emph{Moment classification of infinite energy solutions to the homogeneous Boltzmann equation},
arXiv: 1506.0649v2 (2015)


\bibitem[30]{Pil} R. N. Pillai,
\emph{On Mittag-Leffler functions and related distributions},
Ann. Inst. Stat. Math. 42, pp. 157--161 (1990)

\bibitem[31]{Pin} M. Pinsky,
\emph{Introduction to Fourier Analysis and Wavelets},
Graduate Studies in Math. 102, Amer. Math. Soc., Providence, Rhode Island (2002)

\bibitem[32]{PT} A. Pulvirenti and G. Toscani,
\emph{The theory of the nonlinear Boltzmann equation for Maxwell molecules in Fourier representation},
Ann. Mat. Pura Appl. 171, pp. 181--204 (1996)

\bibitem[33]{R} W. Rudin,
\emph{Real and Complex Analysis},
McGraw-Hill (1966)

\bibitem[34]{S} I. Schoenberg,
\emph{Metric spaces and positive definite functions},
Trans. Amer. Math. Soc. 44, pp. 522--536 (1938)


\bibitem[35]{TV} G. Toscani and C. Villani,
\emph{Probability metrics and uniqueness of the solution
to the Boltzmann equation for a Maxwell gas}, J. Stat. Phys. 94, pp. 619--637 (1999)

\bibitem[36]{Ush} N. G. Ushakov,
\emph{Some ineualities for absolute moments},
Stat. Prob. Letters 81, pp. 2011--2015 (2011)


\bibitem[37]{V} C. Villani,
\emph{Topics in Optimal Transportation},
Graduate Studies in Math. 58, Amer. Math. Soc., Providence, Rhode Island (2003)

\bibitem[38]{Bahr1} B. von Bahr,
\emph{On the convergence of moments in the central limit theorem},
Ann. Math. Stat. 36, pp. 808-818 (1965)


\bibitem[39]{Wa} L. N. Wasserstein,
\emph{Markov processes over denumerable products of spaces describing large systems of automata},
Problems of Information Transmission 5, pp. 47--52 (1969)


\bibitem[40]{Wend} H. Wendland,
\emph{Scattered Data Approximation},
Cambridge University Press (2005)



\bibitem[41]{Z} V. M. Zolotarev,
\emph{Probability metrics},
Theor. Prob. Appl. 28, pp. 278-302 (1983)









\end{thebibliography}
\end{document}